\documentclass[12pt]{article}
\usepackage{geometry} 
\geometry{a4paper} 
\usepackage[ utf8 ]{inputenc}
\usepackage[T1]{fontenc}
\usepackage[english,french]{babel}
\usepackage{amsmath}
\usepackage{amsfonts}
\usepackage{amssymb}
\usepackage{amsthm}
\usepackage{textcomp}
\usepackage{eucal}
\usepackage{array}
\theoremstyle{plain}
\newtheorem{theorem}{Théorème}
\newtheorem{corollary}{Corollaire}
\newtheorem{lemma}{Lemme}
\newtheorem{proposition}{Proposition}

\theoremstyle{definition}

\theoremstyle{remark}
\newtheorem{remark}{Remarque}
\setlength{\unitlength}{0.04cm}
\date{}
\title{ Représentations de réflexion de groupes de Coxeter\\Sixième partie: le cas réductible; exemples en rang trois}
\author{François ZARA}
\begin{document}
\maketitle
\begin{abstract}
Dans cette sixième partie nous étudions les groupes de réflexion complexes en rang $3$ non bien engendrés: $G(2r,r,2)$, $G_{12}$, $G_{13}$ et $G_{22}$. Nous partons d'une représentation de réflexion d'un groupe de Coxeter de rang $3$ et nous montrons que nous pouvons ainsi obtenir des groupes de réflexion affines dont la partie linéaire est l'un des groupes précédents.
\end{abstract}
\begin{otherlanguage}{english}
\begin{abstract}
In this sixth part we study rank $3$ reflection groups not well generated: $G(2r,r,2)$, $G_{12}$, $G_{13}$ and $G_{22}$. We start from a reflection representation of a rank $3$ Coxeter group and we show that we can obtain in this manner affine Coxeter groups whose linear part is one of the preceding groups.
\end{abstract}
\end{otherlanguage}
\let\thefootnote\relax\footnote{Mots clés et phrases: groupes de Coxeter, groupes de réflexion.Représentation de réflexion réductible.}
\let\thefootnote\relax\footnote{Mathematics Subject Classification. 20F55,22E40,51F15,33C45.}
\section{Introduction}
Des présentations des groupes de réflexion complexes ont été donnés dans \cite{BMR} et \cite{BR} et Shi dans \cite{S} a, en partant de celles de \cite{BR}, donné une autre présentation du groupe $G_{22}$. Ici notre méthode est complètement différente. On veut trouver des présentations des groupes $G(2r,r,2)$, $G_{12}$, $G_{13}$ et $G_{22}$ comme quotients de groupes de Coxeter de rang $3$ $W(p,q,r)$: on leur applique la construction fondamentale. La structure de ces groupes permet de trouver des restrictions sur les entiers $p$, $q$ et $r$; puis nous exprimons que la représentation est réductible. En exprimant que nous sommes sur le bon corps, nous trouvons toutes les possibilités. notre méthode donne aussi des extensions affines de ces groupes.
\section{Les groupes de réflexion complexe imprimitifs $G(2r,r,2)$ $(r\geqslant 3)$}
\subsection{Le groupe de Coxeter $W(4,4,r)$}
\begin{proposition}
Soient $r$ un entier $\geqslant 3$ et $\gamma$ une racine de $v_{r}(X)$. On considère le groupe de Coxeter $W(4,4,r)$ et l'une de ses représentations affines $R$. On pose $G:=Im R=<s_{1},s_{2},s_{3}>$. Alors:
\begin{enumerate}
  \item  $l$ et $m$ sont les racines du polynôme $Q(X)=X^{2}+\gamma X+\gamma$.\\
  On suppose que $\gamma=4\cos^{2}\frac{k\pi}{r}$ avec $(k,r)=1$ et $0< k < \frac{r}{2}$ alors
  \begin{eqnarray*}
 \{l,m\} & = & \{\frac{-1}{2}(\gamma+i\sqrt{\gamma(4-\gamma))},\frac{-1}{2}(\gamma-i\sqrt{\gamma(4-\gamma)})\}\\
  & = & \{-1-\exp (\frac{2k\pi i}{r}),-1-\exp (\frac{-2k\pi i}{r})\}.
\end{eqnarray*}
Le choix de $l$ détermine une représentation de réflexion affine de $W(4,4,r)$, l'autre choix correspond à la représentation duale $R^{\star}$. Le système de paramètres de $G$ est $\mathcal{P}(G)=\mathcal{P}(2,2,\gamma;2l,2m)$.
 \item La représentation $R$ est réalisée sur le corps $K=\mathbb{Q}(\zeta_{r})$ où $\zeta_{r}$ est une racine primitive $r$-ième de l'unité. On a $K_{0}=\mathbb{Q}(\zeta_{r}+\zeta_{r}^{-1})$ et $K_{0}$ est le sous-corps réel de $K$. (Si $r=2r_{1}$ avec $r_{1}$ impair , on a $\mathbb{Q}(\zeta_{r})=\mathbb{Q}(\zeta_{r_{1}})$).
\end{enumerate}
\end{proposition}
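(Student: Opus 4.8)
The plan is to read the parameters of $R$ off the fundamental construction applied to $W(4,4,r)$, translate the defining relations into the quadratic $Q$, and only then deduce the explicit description of $\{l,m\}$ and the field statement, both of which become elementary. First I would recall from the earlier parts how the construction attaches to $W(4,4,r)$ a reflection datum in which the three generators $s_1,s_2,s_3$ carry roots whose pairwise interactions are governed by the edge–orders $4,4,r$. In the normalization used here an edge of order $n$ contributes the value $4\cos^2(\pi/n)$; since $4\cos^2(\pi/4)=2$, the two order-$4$ edges produce the entries $2,2$, while the order-$r$ edge produces $\gamma=4\cos^2(k\pi/r)$, the admissible twist being recorded precisely by $\gamma$ being a root of $v_r(X)$. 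This already fixes the first three slots of the parameter system as $(2,2,\gamma)$, and the two remaining marks are $2l,2m$. The requirement that $R$ be \emph{reducible} (equivalently that the relevant Gram determinant vanish, so that one lands on an affine group rather than a finite linear one) then forces $l$ and $m$ to satisfy a common quadratic. I expect this to be the main obstacle: one must extract from the bookkeeping of the construction that this quadratic is exactly $X^2+\gamma X+\gamma$, i.e. that $l+m=-\gamma$ and $lm=\gamma$, so that $l,m$ are the two roots of $Q$. Everything that follows is formal.

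Granting that $l,m$ are the roots of $Q$, the explicit formulas are pure computation. The discriminant is $\gamma^2-4\gamma=\gamma(\gamma-4)$, and since $0<k<r/2$ forces $0<\gamma<4$ this is negative, so $\sqrt{\gamma(\gamma-4)}=i\sqrt{\gamma(4-\gamma)}$ and the quadratic formula yields the first displayed description of $\{l,m\}$. To reach the exponential form I would substitute $\gamma=4\cos^2(k\pi/r)=2+2\cos(2k\pi/r)$ and $4-\gamma=4\sin^2(k\pi/r)$, so that $\sqrt{\gamma(4-\gamma)}=2\sin(2k\pi/r)$, the sign being fixed because $0<2k\pi/r<\pi$. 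Then $-\tfrac12\bigl(\gamma\pm i\sqrt{\gamma(4-\gamma)}\bigr)=-1-\cos(2k\pi/r)\mp i\sin(2k\pi/r)=-1-\exp(\pm 2k\pi i/r)$, which is the second description; as a check, $l+m=-\gamma$ and $lm=\gamma$ follow at once. Complex conjugation interchanges $l$ and $m$, and since in the construction conjugating the marks is exactly what passes from $R$ to its contragredient, the two choices of $l$ give $R$ and $R^{\star}$; the parameter system $\mathcal{P}(G)=\mathcal{P}(2,2,\gamma;2l,2m)$ is then immediate.

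For the second assertion I would simply note that $\gamma=2+\zeta_r^{k}+\zeta_r^{-k}$ and $l=-1-\zeta_r^{k}$ both lie in $K=\mathbb{Q}(\zeta_r)$, so $R$ is realized over $K$; and because $(k,r)=1$, $\zeta_r^{k}$ is again a primitive $r$-th root of unity, whence $K$ cannot be shrunk and is exactly the field of realization. The field $K_0=\mathbb{Q}(\zeta_r+\zeta_r^{-1})$ is the fixed field of the complex conjugation $\zeta_r\mapsto\zeta_r^{-1}$, hence the maximal real subfield of $K$, of index $2$ — the standard description of the real cyclotomic field. Finally, when $r=2r_1$ with $r_1$ odd, $-\zeta_{r_1}$ has order $\operatorname{lcm}(2,r_1)=r$, so it is a primitive $r$-th root of unity and $\mathbb{Q}(\zeta_r)=\mathbb{Q}(\zeta_{r_1})$, which settles the parenthetical remark.
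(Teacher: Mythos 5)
Your proposal is correct and takes essentially the same route as the paper. The step you flag as ``the main obstacle'' is in fact the paper's entire proof, and in its framework it is a one-line computation: $lm=\gamma$ is a standing relation of the fundamental construction (not a consequence of reducibility), and the affine condition is simply $\Delta=8-4-4-2\gamma-2(l+m)=0$, i.e. $l+m=-\gamma$; the explicit roots and the field assertions, which you work out in full, are exactly what the paper dismisses with ``on voit facilement'' and ``tout le reste est clair''.
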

\begin{proof}
On a $\Delta=8-4-4-2\gamma-2(l+m)=0$ donc $l+m=-\gamma$ et $lm=\gamma$: $l$ et $m$ sont les racines  du polynôme $Q(X)=X^{2}+\gamma X+\gamma$ et l'on voit facilement que l'on a les valeurs de l'énoncé. Tout le reste est clair.
\end{proof}
Des calculs simples montrent que l'on a les relations suivantes entre $l$ et $m$:
\begin{equation}
4-\gamma=(l+2)(m+2)=(l+2)+(m+2)=4\sin^{2} \frac{k\pi}{r};
\end{equation}
\begin{equation}
1=(l+1)(m+1),\; m+2=(l+2)(m+1),\; l+2=(m+2)(l+1);
\end{equation}
\begin{equation}
(4-\gamma)(l+1)=(l+2)^{2},\; (4-\gamma)(m+1)=(m+2)^{2};
\end{equation}
\begin{equation}
\frac{1}{l+2}+\frac{1}{m+2}=1.
\end{equation}
Donc $l+2$ et $m+2$ sont les racines du polynôme $Q'(X)=X^{2}+(4-\gamma)X+(4-\gamma)$.
\begin{proposition}
On garde les hypothèses et notations précédentes. On pose $z_{3}:=(s_{1}s_{2})^{2}$, $s_{13}:=s_{1}z_{3}$ et $s_{23}:=s_{2}z_{3}$. On a $G=<s_{1},s_{23},s_{3}>$ avec le système de paramètres $\mathcal{P}(2,2,4-\gamma;2(l+2),2(m+2))$. Il en résulte que $G$ est l'image d'une représentation de réflexion affine du groupe de Coxeter $W(4,4,r')$.
\end{proposition}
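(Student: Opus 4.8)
The plan is to view the passage from the triple $(s_1,s_2,s_3)$ to $(s_1,s_{23},s_3)$ as a mere change of reflection system inside the fixed group $G$, and then to read off the new parameter system by a direct computation governed by the relations $(1)$–$(4)$ and by the identity $\Delta=0$ of Proposition 1.

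First I would observe that, since $\{s_1,s_2\}$ is an edge of label $4$ in $W(4,4,r)$ (parameter $2=4\cos^2\frac{\pi}{4}$), the element $s_1s_2$ has order $4$ and $z_3=(s_1s_2)^2$ is the central involution of the dihedral group $\langle s_1,s_2\rangle$ of order $8$. Hence $s_{23}=s_2z_3=s_1s_2s_1$ and $s_{13}=s_1z_3=s_2s_1s_2$ are reflections, being conjugates of $s_2$ and $s_1$. From $s_2=s_1s_{23}s_1$ one gets $s_2\in\langle s_1,s_{23}\rangle$, so $\langle s_1,s_{23},s_3\rangle\supseteq\langle s_1,s_2,s_3\rangle=G$; the reverse inclusion is clear since $s_{23}\in G$. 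This settles $G=\langle s_1,s_{23},s_3\rangle$, and it records that $s_{23}$ has root $\beta=s_1(\alpha_2)$ and coroot $\beta^{\vee}=s_1(\alpha_2^{\vee})$, while $s_1$ and $s_3$ keep their roots and coroots.

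The heart of the proof is the recomputation of the five parameters for $(s_1,s_{23},s_3)$. Writing $c_{ij}=\langle\alpha_i,\alpha_j^{\vee}\rangle$, the new off-diagonal data are $\langle\alpha_1,\beta^{\vee}\rangle=-c_{12}$, $\langle\beta,\alpha_1^{\vee}\rangle=-c_{21}$, $\langle\beta,\alpha_3^{\vee}\rangle=c_{23}-c_{21}c_{13}$ and $\langle\alpha_3,\beta^{\vee}\rangle=c_{32}-c_{12}c_{31}$. The three edge parameters are then
\[
\{s_1,s_{23}\}:\ (-c_{12})(-c_{21})=2,\qquad \{s_1,s_3\}:\ c_{13}c_{31}=2,
\]
\[
\{s_{23},s_3\}:\ (c_{23}-c_{21}c_{13})(c_{32}-c_{12}c_{31})=\gamma-(u+v)+(c_{12}c_{21})(c_{13}c_{31}),
\]
where $u,v$ are the two oriented triangle products $c_{12}c_{23}c_{31}$ and $c_{13}c_{21}c_{32}$. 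The relation $\Delta=0$ reads $u+v=2\gamma$ (equivalently $l+m=-\gamma$), so the last parameter equals $\gamma-2\gamma+4=4-\gamma$. The same substitution turns $u,v$ into the new triangle products $-u+4$ and $-v+4$, which, after using $lm=\gamma$, $l+m=-\gamma$ together with the relations $(1)$–$(4)$ (in particular $4-\gamma=(l+2)+(m+2)=(l+2)(m+2)$), are $2(l+2)$ and $2(m+2)$ up to the common sign fixed by the choice of root $\beta$. A final verification that
\[
8-2\big(2+2+(4-\gamma)\big)+\big(2(l+2)+2(m+2)\big)=0
\]
shows that the new Gram matrix is again singular, so the representation stays affine; thus the parameter system of $(s_1,s_{23},s_3)$ is $\mathcal{P}(2,2,4-\gamma;2(l+2),2(m+2))$.

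It then remains to recognise this system. By relation $(1)$ one has $4-\gamma=4\sin^2\frac{k\pi}{r}=4\cos^2\frac{(r-2k)\pi}{2r}$, which is of the form $4\cos^2\frac{k'\pi}{r'}$ as soon as $\frac{k'}{r'}=\frac{r-2k}{2r}$ is reduced to lowest terms (giving $r'=2r$ when $r$ is odd, with the analogous reduction when $r$ is even, compatibly with the remark $\mathbb{Q}(\zeta_{2r_1})=\mathbb{Q}(\zeta_{r_1})$ for $r_1$ odd). Hence $4-\gamma$ is a root of $v_{r'}(X)$, and $\mathcal{P}(2,2,4-\gamma;2(l+2),2(m+2))$ is exactly the system attached by Proposition 1 to $W(4,4,r')$; reading that proposition in the reverse direction identifies $(s_1,s_{23},s_3)$ as the image of an affine reflection representation of $W(4,4,r')$, which is the assertion. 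The step I expect to be the main obstacle is the sign bookkeeping in the triangle products: the edge parameters are insensitive to the sign of $\beta$, but the products $2(l+2),2(m+2)$ are odd in it, so one must fix the sign of the root of $s_{23}$ — equivalently, choose between $R$ and $R^{\star}$ as in Proposition 1 — so as to obtain exactly $2(l+2)$ and $2(m+2)$ rather than their negatives.
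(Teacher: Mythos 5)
The paper gives no proof at all for this proposition ("Elle ne présente pas de difficultés"), so there is nothing to compare against approach-wise; a direct computation of the new Cartan data, as you do, is the only route. Most of your computation is sound: $s_{23}=s_1s_2s_1$ and $s_{13}=s_2s_1s_2$ are indeed reflections, $G=\langle s_1,s_{23},s_3\rangle$ follows as you say, the new edge parameters are $2$, $2$ and $\gamma-(u+v)+4=4-\gamma$, and the new oriented triangle products are $-u+4$ and $-v+4$. The gap is exactly at the step you flag as "the main obstacle", and your proposed fix does not work. The triangle products are invariants of the triple of reflections: if you rescale the root $\beta$ by any $\lambda$ (including $\lambda=-1$), the coroot must be rescaled by $\lambda^{-1}$ to preserve $s_{23}(x)=x-\langle x,\beta^{\vee}\rangle\beta$ and $\langle\beta,\beta^{\vee}\rangle=2$, so each triangle product picks up one factor $\lambda$ and one factor $\lambda^{-1}$ and is unchanged; and passing from $R$ to $R^{\star}$ swaps the two products but never negates them. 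No choice of sign of the root, and no choice between $R$ and $R^{\star}$, can turn $-u+4,\,-v+4$ into their negatives.

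Your own computation then forces a conclusion differing by sign from the printed statement. For the original triple you translate $\Delta=0$ as $u+v=2\gamma$, "equivalently $l+m=-\gamma$"; this equivalence is the dictionary $u=-2l$, $v=-2m$, i.e.\ the entries $2l,2m$ of $\mathcal{P}(2,2,\gamma;2l,2m)$ are the \emph{negatives} of the triangle products. Applying the same dictionary to the new triple, whose triangle products you correctly found to be $2(l+2)$ and $2(m+2)$, yields $\mathcal{P}(2,2,4-\gamma;-2(l+2),-2(m+2))$, not $\mathcal{P}(2,2,4-\gamma;2(l+2),2(m+2))$. The sign is not cosmetic: Proposition 1 applied to $W(4,4,r')$ requires the last two entries to sum to $-2(4-\gamma)$, whereas $2(l+2)+2(m+2)=+2(4-\gamma)$; it is $-(l+2)$ and $-(m+2)$ that are the roots of $X^{2}+(4-\gamma)X+(4-\gamma)$ (the sentence of the paper asserting this of $l+2,m+2$ carries the same sign slip, since $(l+2)+(m+2)=+(4-\gamma)$ by relation (1)). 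The paper's own Remarque 3 confirms the convention: the analogous modified triples there have parameter systems with explicit minus signs, e.g.\ $\mathcal{P}(\alpha,\beta',\gamma';-\alpha(l+2),-(2\alpha+\beta m))$. So the honest conclusion of your (otherwise correct) computation is the sign-corrected system, from which the identification with an affine reflection representation of $W(4,4,r')$ does follow; a correct write-up should end there, rather than trying to absorb the sign into a choice of root.
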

\begin{proof}
Elle ne présente pas de difficultés.
\end{proof}
\textbf{Si $r$ est impair on a  $r'=2r$; on peut donc supposer que $r$ est pair, ce que nous ferons dans la suite. On pose $r=2r_{1}$}
\begin{proposition}
Le groupe $G$ a comme ``présentation'':
\[
G=<w(4,4,r),(s_{1}s_{3}^{s_{2}})^{4}=1>.
\]
\end{proposition}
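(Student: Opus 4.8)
The plan is to view $G$ as a quotient of $W(4,4,r)$ and to pin down the kernel. By Proposition 1, $G=\mathrm{Im}\,R$ is the image of a reflection representation of $W(4,4,r)$, so $s_i\mapsto R(s_i)$ extends to a surjection $\pi\colon W(4,4,r)\twoheadrightarrow G$. Let $\bar W$ denote the group presented by the Coxeter relations of $W(4,4,r)$ together with the single relation $(s_1s_3^{s_2})^4=1$. Everything reduces to two claims: (i) $(s_1s_3^{s_2})^4\in\ker\pi$, so that $\pi$ factors through a surjection $\bar\pi\colon\bar W\twoheadrightarrow G$; and (ii) $\bar\pi$ is injective, i.e. $\ker\pi$ is exactly the normal closure of $(s_1s_3^{s_2})^4$.

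For (i) I would compute inside the representation $R$, writing $\alpha_i$ for the root of $s_i$. The element $s_3^{s_2}=s_2s_3s_2$ is the reflection with root $s_2(\alpha_3)$, so $s_1s_3^{s_2}$ is the product of the two reflections attached to $\alpha_1$ and $s_2(\alpha_3)$, and the order of such a product is governed by the pairwise product of these two roots. Expanding $s_2(\alpha_3)$ by the reflection formula for $s_2$ and substituting the entries of $\mathcal P(G)=\mathcal P(2,2,\gamma;2l,2m)$, I expect this quantity to reduce, via the identities (1)--(4), to the value marking a bond of order $4$ (namely $2=4\cos^2\frac{\pi}{4}$ in the $4\cos^2$ normalisation used for the bonds of $W(4,4,r)$); one must also check that the two mirrors meet, so that $s_1s_3^{s_2}$ is elliptic of order exactly $4$ and not a translation. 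It is clarifying to record that, since $s_{13}=s_2s_1s_2$, one has $s_1s_3^{s_2}=s_2(s_{13}s_3)s_2$, so the imposed relation is conjugate to $(s_{13}s_3)^4=1$: it simply demands that $s_3$ form an order-$4$ bond with $s_1^{s_2}=s_{13}$ exactly as it already does with $s_1$.

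The main obstacle is (ii). The point is that in $G$ the reflections $s_1$ and $s_3^{s_2}$ generate a finite dihedral group of order $8$, whereas in the hyperbolic group $W(4,4,r)$ their product has infinite order; the added relation records precisely this new finite rank-$2$ reflection subgroup, and one must show nothing else is needed. Here I would use the structure of $G$ as an affine reflection group, sitting in $1\to T\to G\to\bar G\to 1$ with $T$ the translation lattice and $\bar G=G(2r,r,2)$ the finite linear part. The strategy is to exhibit in $\bar W$ a normal abelian subgroup $\bar T$ surjecting onto $T$ together with a complement surjecting onto $\bar G$, and to bound each factor: the order-$4$ and dihedral relations now available should force the quotient of $\bar W$ by translations to be exactly $\bar G$, while the rank of $\bar T$ is controlled by the dimension of the representation. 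Comparing these bounds with the known structure of $G$ would force $\bar\pi$ to be an isomorphism.

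A cleaner route to (ii), which I would try first, is to play the two Coxeter descriptions of $G$ against each other. By Proposition 2 the same group is $<s_1,s_{23},s_3>$, the image of a reflection representation of $W(4,4,r')$, with $s_{23}=s_2z_3$, $z_3=(s_1s_2)^2$ and the substitution $\gamma\mapsto 4-\gamma$. Since $\bar W$ already contains $z_3$, hence $s_{13}$ and $s_{23}$, one can rewrite the added relation in these generators and verify, using (1)--(4), that the defining relations of the two presentations are mutually derivable, thus playing the finite part against the affine part. The delicate step, where I expect the real work to concentrate, is certifying that the ensuing coset enumeration, equivalently that the lattice $\bar T$, closes up exactly and produces no collapse beyond the one already present in $G$.
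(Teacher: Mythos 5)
Your step (i) is, in substance, the paper's entire proof. The paper computes $C(s_{1},s_{3}^{s_{2}})=\alpha\gamma+\beta+(\alpha l+\beta m)$, which with $\alpha=\beta=2$ gives $2\gamma+2+2(l+m)$, so that $C(s_{1},s_{3}^{s_{2}})=2$ (equivalently, $s_{1}s_{3}^{s_{2}}$ of order $4$) holds if and only if $l+m=-\gamma$, which is exactly the condition $\Delta=0$ defining the affine representation $R$ of Proposition 1. This equivalence gives both directions at once: the relation holds in $G$, and conversely, among the images of the reflection representations of $W(4,4,r)$ furnished by the fundamental construction, the added relation forces $\Delta=0$ and hence singles out $G$ (up to the duality $R\leftrightarrow R^{\star}$). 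That weaker, ``relative'' reading is what the quotation marks around \emph{présentation} signal throughout the paper (compare Proposition 9 and the later corollaries, proved the same way); the paper never claims, nor proves, that $\ker\bigl(W(4,4,r)\twoheadrightarrow G\bigr)$ is the normal closure of $(s_{1}s_{3}^{s_{2}})^{4}$.

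That stronger claim is precisely where your proposal has a genuine gap: you reduce the statement to your claims (i) and (ii), you identify (ii) --- injectivity of $\bar\pi$, i.e.\ no collapse beyond the normal closure of the new relator --- as the main obstacle, and then you never prove it. Both routes you sketch (reconstructing a translation lattice $\bar T$ inside $\bar W$ and bounding the two factors of the extension; or playing the $W(4,4,r)$ and $W(4,4,r')$ descriptions of $G$ against each other) are left as programmes, with the decisive verification deferred in your own words (``the delicate step \dots is certifying that the ensuing coset enumeration \dots closes up exactly''). Note moreover that naive coset enumeration cannot terminate here, since $G$ is infinite: it contains the free abelian translation subgroup $N$ of Proposition 5. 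So making either strategy rigorous would be substantial work --- work that is not required for the statement as the paper intends it. If you drop (ii) and carry out your computation in (i) as a two-way equivalence (order $4$ $\iff$ $C(s_{1},s_{3}^{s_{2}})=2$ $\iff$ $l+m=-\gamma$ $\iff$ $\Delta=0$), you recover exactly the paper's proof.
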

\begin{proof}
On a $C(s_{1,}s_{3}^{s_{2}})=\alpha\gamma+\beta+(\alpha l+\beta m)=2\gamma+2(l+m)$ donc $C(s_{1},s_{3}^{s_{2}})=2$ équivaut à $l+m=-\gamma$, ce qui est aussi équivalent à $\Delta=0.$ On a le résultat car $C(s_{1},s_{3}^{s_{2}})=2$ équivaut à $s_{1}s_{3}^{s_{2}}$ d'ordre $4$.
\end{proof}
\subsection{Le groupe $G'=G/N$}
La représentation $R$ sur $M$ induit une représentation $R'$ de $G'$ sur $M'=M/<b>$ qui est de dimension $2$. Comme $SL_{2}(M')$ ne possède qu'un élément d'ordre $2$, nous obtenons les relations $z'=(s'_{1}s'_{2})^{2}=(s'_{1}s'_{3})^{2}=(s'_{2}s'_{3})^{r_{1}}$, $z'\in Z(G')$. (Ici $g'$ désigne l'image de $g$ dans $G'$.) Posons $A:=<s'_{2},s'_{3}>$. Alors $A$ est un groupe diédral isomorphe à $D_{r}$ et $z'\in A$. Nous avons $s'_{1}s'_{2}s'_{1}=z's'_{2}$ et $s'_{1}s'_{3}s'_{1}=z's'_{3}$, donc $A\lhd G'$, $G'=<A,s'_{1}>$, d'où $|G'|=4r$ et $G'\simeq D_{r}\rtimes C_{2}$.

La représentation de $<s'_{2}s'_{3}>$ sur $M'$ n'est pas irréductible. Dans la base $\mathcal{B}$ de $M$ on a 
\[
s_{2}=
\begin{pmatrix}
1 & 0 & 0\\
0 & -1 & l\\
0 & 0 &1
\end{pmatrix}
\text{et}\;
s_{3}=
\begin{pmatrix}
1 & 0 & 0\\
0 & 1 & 0\\
0 & m & -1
\end{pmatrix}
\]
donc dans la base $\mathcal{B}'=(a'_{2},a'_{3})$ de $M'$ ($a'_{i}$ image de  $a_{i}$ dans $M'$), on a 
\[
s'_{2}=
\begin{pmatrix}
-1 & l\\
0 & 1
\end{pmatrix}
,
s'_{3}=
\begin{pmatrix}
1 & 0\\
m & -1
\end{pmatrix}
,
s'_{2}s'_{3}=
\begin{pmatrix}
\gamma-1 & -l\\
m & -1
\end{pmatrix}
.
\]
\[
P_{s'_{2}s'_{3}}(X)=x^{2}-(\gamma-2)X+1=(X+l+1)(X+m+1).
\]
Un vecteur propre correspondant à la valeur propre $-(l+1)$ (resp. $-(m+1)$) est $v_{1}=la'_{2}-ma'_{3}$ (resp. $v_{2}=a'_{2}-a'_{3}$). Posons $V_{i}:=<v_{i}>\,(i\in\{2,3\})$. Alors $M'=V_{1}\oplus V_{2}$, $s'_{2}$ et $s'_{3}$ échangent $V_{1}$ et $V_{2}$ tandis que $s'_{1}$ les stabilise. Ainsi nous voyons que $G'$ induit un groupe de réflexion complexe imprimitif sur $M'$. On a donc $G'\simeq G(r,r/2,2)$ (on rappelle que $r$ est pair).
\begin{proposition}
Une présentation de $G'$ est:
\[
<s_{1},s_{2},s_{3}|s_{i}^{2}=1=(s_{2}s_{3})^{r} (1\leqslant i \leqslant 3), (s_{1}s_{2})^{2}=(s_{1}s_{3})^{2}=(s_{2}s_{3})^{r/2}>
\]
\end{proposition}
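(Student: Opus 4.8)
Le plan est de montrer que le groupe abstrait $\Gamma$ défini par la présentation de l'énoncé est isomorphe à $G'$. Comme on sait déjà que $|G'|=4r$ et $G'\simeq G(r,r/2,2)$, il suffira d'établir deux points: (i) toutes les relations de la présentation sont vérifiées dans $G'$, ce qui fournit un morphisme surjectif $\pi\colon\Gamma\twoheadrightarrow G'$; (ii) $|\Gamma|\leqslant 4r$. La conjonction de (i) et (ii) forcera $\pi$ à être un isomorphisme.

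Pour (i), les relations $s_i^2=1$ traduisent le fait que les $s_i'$ sont des réflexions, la relation $(s_2s_3)^r=1$ provient de $A=\langle s_2',s_3'\rangle\simeq D_r$, et les égalités $(s_1s_2)^2=(s_1s_3)^2=(s_2s_3)^{r/2}$ sont exactement les relations $z'=(s_1's_2')^2=(s_1's_3')^2=(s_2's_3')^{r_1}$ déjà démontrées, avec $r_1=r/2$. Ainsi toutes les relations sont valides dans $G'$ et $\pi$ existe.

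Pour (ii), je poserais $B:=\langle s_2,s_3\rangle\leqslant\Gamma$ et $z:=(s_2s_3)^{r/2}$. Les relations $s_2^2=s_3^2=(s_2s_3)^r=1$ montrent que $B$ est un quotient du groupe diédral $D_r$, donc $|B|\leqslant 2r$; de plus $z$ est la rotation d'angle $\pi$, qui est centrale dans $B$ puisque $s_2(s_2s_3)^{r/2}s_2=(s_2s_3)^{-r/2}=(s_2s_3)^{r/2}$ (ici $r$ pair est essentiel). Les relations $(s_1s_2)^2=z$ et $(s_1s_3)^2=z$ se réécrivent $s_1s_2s_1=zs_2$ et $s_1s_3s_1=zs_3$; comme $z\in B$, ces conjugués appartiennent à $B$, de sorte que $s_1$ normalise $B$. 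Par conséquent $B\lhd\Gamma$, et $\Gamma/B$ est engendré par l'image de $s_1$, d'ordre divisant $2$: d'où $[\Gamma:B]\leqslant 2$ et $|\Gamma|\leqslant 2|B|\leqslant 4r$, ce qui établit (ii).

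La difficulté principale — assez légère ici — réside dans le contrôle de la borne (ii): il faut s'assurer que $z$ est bien central dans $B$ et que $s_1$ normalise $B$, ce qui repose de façon cruciale sur le fait que $r$ est pair (hypothèse en vigueur dans toute cette sous-section). Une vérification alternative consisterait à reconnaître directement dans la présentation une présentation standard de $G(r,r/2,2)\simeq D_r\rtimes C_2$, mais l'argument de comptage ci-dessus me paraît le plus économique et le plus sûr.
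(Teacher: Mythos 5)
Your proof is correct and takes essentially the same approach as the paper: verify that the stated relations hold in $G'$ (yielding a surjection from the presented group onto $G'$), then conclude by an order comparison — the paper phrases this as observing that the presented group is a quotient of $W(4,4,r)$ and then ``easily'' constructing the isomorphism. Your explicit bound $|\Gamma|\leqslant 4r$, obtained by showing $B=\langle s_{2},s_{3}\rangle$ is a quotient of $D_{r}$, normal and of index at most $2$ in $\Gamma$, supplies precisely the counting detail the paper leaves implicit, and it mirrors the structure argument the paper already used to establish $|G'|=4r$.
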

\begin{proof}
Dans un groupe $H$ ayant la présentation de l'énoncé on a $(s_{1}s_{2})^{4}=(s_{1}s_{3})^{4}=(s_{2}s_{3})^{r}=1$, donc $H$ est isomorphe à un quotient du groupe $W(4,4,r)$. On construit ainsi facilement un isomorphisme entre $H$ et $G'$ d'où le résultat.
\end{proof}
\subsection{Le sous-groupe $N$ des translations de $G$}
On pose $e_{1}:=(s_{2}s_{3})^{r_{1}}(s_{1}s_{3})^{2}$ et $e_{2}:=(s_{2}s_{3})^{r_{1}}(s_{1}s_{2})^{2}$. Soit $\mathcal{R}$ le sous-anneau de $\mathbb{Z}[\zeta_{r}]$ engendré par $\zeta_{r}+\zeta_{r}^{-1}$ et $2\zeta_{r}$. Le but de cette partie est de démontrer le résultat suivant:
\begin{proposition}
Le groupe $N$ est un $\mathcal{R}$-module libre de rang $2$ de base $(e_{1},e_{2})$. C'est aussi un $\mathbb{Z}$-module libre de rang $2n$.
\end{proposition}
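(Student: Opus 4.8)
The plan is to describe $N$ first as a module over the group ring $\mathbb{Z}[G']$ and then to pin down the exact commutative ring of scalars acting on it. Throughout set $\rho := s_2 s_3$ and $\eta := \zeta_r + \zeta_r^{-1}$, and recall from the study of $G' = G/N \simeq G(r,r/2,2)$ that, in the eigenbasis $(v_1, v_2)$ of $M' = V_1 \oplus V_2$, the image of $\rho$ is $\mathrm{diag}(\zeta_r, \zeta_r^{-1})$, that $s_2'$ and $s_3'$ exchange $V_1$ and $V_2$, and that $s_1'$ is diagonal with eigenvalues $\pm 1$. First I would check that $e_1, e_2 \in N$: by the presentation of $G'$ (Proposition 4) one has $(s_1 s_3)^2 \equiv (s_1 s_2)^2 \equiv (s_2 s_3)^{r_1} \pmod N$, so $e_1' = e_2' = (s_2' s_3')^{2 r_1} = (s_2' s_3')^{r} = 1$ and both $e_i$ are indeed translations.

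Next I would realise $N$ concretely. Since $s_2, s_3$ fix the radical vector $b = a_1$ and are linear in the basis $\mathcal{B}$, the matrix of $(s_2 s_3)^{r_1} = \rho^{r_1}$ is $\mathrm{diag}(1,-1,-1)$, i.e. it fixes $b$ and acts by $-\mathrm{id}$ on $M'$. As $(s_1 s_3)^2$ likewise has linear part $-\mathrm{id}$ on $M'$ (its image is the central element $z'$) and fixes $b$, the product $e_1 = (s_2 s_3)^{r_1}(s_1 s_3)^2$ is the pure translation whose vector $w_1 \in M'$ is exactly the affine part of $(s_1 s_3)^2$; similarly $e_2$ is the translation by the affine part $w_2$ of $(s_1 s_2)^2$. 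I would compute $w_1, w_2$ from the matrix of $s_1$ and check that they are $K$-linearly independent in $M'$, so that they will automatically be $\mathcal{R}$-independent.

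The conjugation action of $G$ on the abelian group $N$ factors through $G'$ and, in these coordinates, is just the linear action of $G'$ on translation vectors; thus $N$ is a $\mathbb{Z}[G']$-submodule of $M' \cong K^2$. The heart of the argument is to exhibit $\mathcal{R}$ as scalars inside $\mathbb{Z}[G']$. Since $\rho$ has determinant $1$ and trace $\eta$ on $M'$, the element $\rho + \rho^{-1}$ acts as $\eta\cdot\mathrm{id}$; and since $\rho - \rho^{-1} = (\zeta_r - \zeta_r^{-1})\,s_1'$ in the eigenbasis, the element $(\rho + \rho^{-1}) + (\rho - \rho^{-1})\,s_1'$ of $\mathbb{Z}[G']$ acts as the scalar $\eta + (\zeta_r - \zeta_r^{-1}) = 2\zeta_r$ (up to a harmless sign according to the eigenvalue ordering of $s_1'$). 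Hence the subring of $\mathrm{End}(N)$ coming from the $G'$-action contains $\mathbb{Z}[\eta, 2\zeta_r] = \mathcal{R}$, and therefore $\mathcal{R}w_1 + \mathcal{R}w_2 \subseteq N$, the sum being direct by the independence of $w_1, w_2$.

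Finally I would establish the reverse inclusion. Comparing the presentation of $G$ (Proposition 3) with that of $G'$ (Proposition 4), the relations that collapse $G$ onto $G'$ are precisely $e_1 = e_2 = 1$ and their conjugates, so $N$ is the normal closure $\langle\langle e_1, e_2\rangle\rangle_G$; as $N$ is abelian this is exactly the $\mathbb{Z}[G']$-submodule generated by $w_1, w_2$. It then suffices to verify that the generators $s_1', s_2', s_3'$ carry $w_1, w_2$ back into $\mathcal{R}w_1 + \mathcal{R}w_2$, a finite check that closes the module and yields $N = \mathcal{R}w_1 \oplus \mathcal{R}w_2$, free of rank $2$ over $\mathcal{R}$. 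Since $\mathcal{R} = \mathbb{Z}[\eta] \oplus 2\zeta_r\,\mathbb{Z}[\eta]$ is an order in $K = \mathbb{Q}(\zeta_r)$, it is $\mathbb{Z}$-free of rank $[K:\mathbb{Q}] = n$, whence $N$ is $\mathbb{Z}$-free of rank $2n$. The main obstacle is precisely this last ring identification: one must confirm that the reflections furnish the scalar $2\zeta_r$ but \emph{not} $\zeta_r$ — the factor $2$ reflecting that a single reflection only pairs the two eigenlines, so that no odd multiple of $\zeta_r$ is produced — since otherwise one would erroneously obtain the full ring of integers $\mathbb{Z}[\zeta_r]$ in place of $\mathcal{R}$.
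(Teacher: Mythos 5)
Your opening moves are correct, and your device for producing the $\mathcal{R}$-action is genuinely nicer than the paper's: the element $(\rho+\rho^{-1})+(\rho-\rho^{-1})s_1'$ of $\mathbb{Z}[G']$ does act as the scalar $2\zeta_r$, which gives $\mathcal{R}e_1+\mathcal{R}e_2\subseteq N$ cleanly (the paper instead writes the three conjugation matrices in the basis $(e_1,e_2)$ and re-expresses the entries of $s_1$ by means of a polynomial $P$ with $(4-\gamma)P(\gamma)\in\{1,2\}$). Your identification of $N$ with the normal closure of $\{e_1,e_2\}$, i.e.\ with the $\mathbb{Z}[G']$-submodule of $M'$ generated by the two translation vectors, is also correct.

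The proof dies, however, exactly at the step you deferred: the ``finite check'' that $s_1,s_2,s_3$ send $e_1,e_2$ back into $\mathcal{R}e_1+\mathcal{R}e_2$ \emph{fails}, and no amount of checking can make it succeed. In the paper's coordinates $e_1=(\tfrac{2}{l+2},0)$, $e_2=(0,\tfrac{2}{m+2})$ one finds (these are the paper's own formulas) $s_2.e_1=-e_1-me_2$ and $s_3.e_2=-le_1-e_2$, with $l=-1-\zeta_r$, $m=-1-\zeta_r^{-1}$. Since $N$ is an additive group stable under conjugation, $N\ni s_3.e_2+e_2=(1+\zeta_r)e_1$, hence $\zeta_re_1\in N$; iterating with $s_2$ and $s_3$ gives $\mathbb{Z}[\zeta_r]e_1\oplus\mathbb{Z}[\zeta_r]e_2\subseteq N$ (in fact equality, since every entry of the three matrices lies in $\mathbb{Z}[\zeta_r]$). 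But $\zeta_r\notin\mathcal{R}$: one has $\mathcal{R}=\mathbb{Z}[\zeta_r+\zeta_r^{-1}]+2\mathbb{Z}[\zeta_r]$, a subring of index at least $2^{\varphi(r)/2}$ in $\mathbb{Z}[\zeta_r]$ (for $r=6$, $\mathcal{R}=\mathbb{Z}[\sqrt{-3}]\subsetneq\mathbb{Z}[\zeta_6]$). So $\mathcal{R}e_1\oplus\mathcal{R}e_2$ is not even $G$-stable, and is strictly smaller than $N$. Your closing heuristic --- that the reflections can furnish $2\zeta_r$ but never $\zeta_r$ --- is precisely where the reasoning goes wrong: it conflates the subring of \emph{scalar operators} in the image of $\mathbb{Z}[G']$ with the \emph{submodule} generated by $e_1,e_2$; the off-diagonal entries $-l,-m$ of $s_2,s_3$ produce $\zeta_r^{\pm1}e_i$ even if no group-ring element acts as the scalar $\zeta_r$. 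You are in good company: the paper's proof verifies only the entries of $s_1$ and passes over $s_2,s_3$ in silence, so it has the same hole; what the computations actually establish is that $N$ is free of rank $2$ over $\mathbb{Z}[\zeta_r]$ with basis $(e_1,e_2)$, which still yields the $\mathbb{Z}$-rank $2n$ with $n=\varphi(r)$, but not freeness over $\mathcal{R}$ with that basis.
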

\begin{proof}
Un calcul simple montre que l'on a:
\[
e_{1}=(\frac{2}{l+2},0),e_{2}=(0,\frac{2}{m+2}).
\]
On en déduit aussitôt que:
\begin{eqnarray*}
c_{1} & = & (l+2)e_{1}+(m+2)e_{2} \\
c_{2} & = & -(l+2)e_{1}+\frac{l(m+2)}{2}e_{2}\\
c_{3} & = & \frac{m(l+2)}{2}e_{1}-(m+2)e_{2} 
\end{eqnarray*}
On obtient alors:
\[
s_{1}.e_{1}  =  (1-\frac{2}{m+2})e_{1}-\frac{2}{l+2}e_{2}  =  \frac{m}{m+2}e_{1}-\frac{2}{l+2}e_{2} 
\]
\[
s_{1}.e_{2}  =  -\frac{2}{m+2}e_{1}+(1-\frac{2}{l+2})e_{2}  =  -\frac{2}{m+2}e_{1}+\frac{l}{l+2}e_{2} 
\]
\[
s_{2}.e_{1}=-e_{1}-me_{2},\;s_{2}.e_{2}=e_{2};\qquad s_{3}.e_{1}=e_{1},\;s_{3}.e_{2}=-le_{1}-e_{2}
\]
en utilisant les relations du début et le fait que $l(m+1)=\gamma+l=-m$ et $m(l+1)=\gamma+m=-l$.\\
Dans le système $(e_{1},e_{2})$ de $N$ on a:
\[
s_{1}=
\begin{pmatrix}
\frac{m}{m+2} & \frac{-2}{m+2}\\
\frac{-2}{l+2} & \frac{l}{l+2}
\end{pmatrix}
,
s_{2}=
\begin{pmatrix}
-1 & 0\\
-m & 1
\end{pmatrix}
,
s_{3}=
\begin{pmatrix}
1 & -l\\
0 & -1
\end{pmatrix}
\]
\[
s_{1}^{2}=s_{2}^{2}=s_{3}^{2}=I,\;(s_{1}s_{2})^{2}=(s_{1}s_{3})^{2}=(s_{2}s_{3})^{r_{1}}=-I.
\]
Nous montrons maintenant que les formules précédentes donnant $s_{1}$, $s_{2}$ et $s_{3}$ peuvent s'écrire dans $\mathcal{R}$. Distinguons deux cas suivant que $r$ est une puissance de $2$ ou non.\\
- Si $r$ n'est pas une puissance de $2$, nous savons que $4-\gamma$ est inversible, donc aussi $l+2$ et $m+2$ puisque $(l+2)(m+2)=4-\gamma$. Si $P(X)\in \mathbb{Z}[X]$ est tel que $(4-\gamma)P(\gamma)=1$ alors on a $(l+2)^{-1}=(m+2)P(\gamma)$ et a $(m+2)^{-1}=(l+2)P(\gamma)$ donc:
\begin{eqnarray*}
s_{1}.e_{1} & = & m(l+2)P(\gamma)e_{1}-2(m+2)P(\gamma)e_{2} \\
s_{1}.e_{2} & = & -2(l+2)P(\gamma)e_{1}+l(m+2)P(\gamma)e_{2}
\end{eqnarray*}
- Si $r$ est une puissance de $2$, il existe  $P(X)\in \mathbb{Z}[X]$  tel que $(4-\gamma)P(\gamma)=2$ d'où $(l+2)(m+2)P(\gamma)=2$ donc:
\begin{eqnarray*}
s_{1}.e_{1} & = & (1-(l+2)P(\gamma))e_{1}-(m+2)P(\gamma)e_{2} \\
s_{1}.e_{2} & = & -(l+2)P(\gamma)e_{1}+(1-(m+2)P(\gamma))e_{2}.
\end{eqnarray*}
Dans les deux cas, $r$ puissance de $2$ ou non, $N$ est engendré par $e_{1}$ et $e_{2}$ sur l'anneau $\mathcal{R}$. En effet on a $\gamma=\zeta_{r}+\zeta_{r}^{-1}$ et $\alpha l+2=2l+2=-2\zeta_{r}$ et $\mathcal{R}=\mathbb{Z}[\gamma,-2\zeta_{r}]$.\\
Comme $(e_{1},e_{2})$ est un système libre, $N$ est un $\mathcal{R}$-module libre de rang $2$ de base $(e_{1},e_{2})$. Il est clair que c'est aussi un $\mathbb{Z}$-module libre de rang $2r$.
\end{proof}
\subsection{La suite $(\star\star)$ es non scindée}
\begin{proposition}
On a: $[N,s_{1}]=\,<\frac{2}{4-\gamma}c_{1}>$, $[N,s_{2}]=\,<\frac{2}{l+2}c_{2}>$\\ et $[N,s_{3}]=\,<\frac{2}{m+2}c_{3}>$.
\end{proposition}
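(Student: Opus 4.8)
The plan is to use that $N$ is an abelian normal subgroup of $G$, so that conjugation by each $s_i$ is the linear endomorphism of $N$ whose matrix in the basis $(e_1,e_2)$ was computed in the previous proposition. First I would observe that for an abelian group written additively on which $s_i$ acts linearly by a matrix $M_i$ with $M_i^2=I$, the commutator map $n\mapsto[n,s_i]$ is exactly the endomorphism $M_i-I$ (the relation $s_i^2=1$ making $s_i$ and $s_i^{-1}$ act identically, so no sign or inverse ambiguity survives). The whole statement therefore reduces to computing the three images $\mathrm{Im}(M_i-I)$ and recognizing them as cyclic $\mathcal{R}$-modules.

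Next I would subtract the identity from each matrix. For $s_2$ and $s_3$ this is immediate: $M_2-I=\begin{pmatrix}-2&0\\-m&0\end{pmatrix}$ and $M_3-I=\begin{pmatrix}0&-l\\0&-2\end{pmatrix}$ are visibly of rank one, giving $[N,s_2]=\langle-2e_1-me_2\rangle$ and $[N,s_3]=\langle-le_1-2e_2\rangle$. For $s_1$ the diagonal entries become $\frac{m}{m+2}-1=\frac{-2}{m+2}$ and $\frac{l}{l+2}-1=\frac{-2}{l+2}$, so the two columns of $M_1-I$ coincide and $[N,s_1]=\langle\frac{-2}{m+2}e_1+\frac{-2}{l+2}e_2\rangle$. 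That each image is the full cyclic $\mathcal{R}$-module generated by this vector follows because $(M_i-I)(\alpha e_1+\beta e_2)$ is always an $\mathcal{R}$-multiple of it.

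It then remains to rewrite the three generators through $c_1,c_2,c_3$. For $c_1$ I would use $(l+2)(m+2)=4-\gamma$ to get $\frac{2}{4-\gamma}c_1=\frac{2}{m+2}e_1+\frac{2}{l+2}e_2$, which is the $s_1$-generator up to the unit $-1$. For $c_2$ and $c_3$ the only real input is the identity $l+m+lm=-\gamma+\gamma=0$, which rearranges to $\frac{l(m+2)}{l+2}=-m$ and $\frac{m(l+2)}{m+2}=-l$; substituting these into the definitions of $c_2$ and $c_3$ yields $\frac{2}{l+2}c_2=-2e_1-me_2$ and $\frac{2}{m+2}c_3=-le_1-2e_2$, matching the generators found above.

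I expect no serious obstacle here: the computation is essentially three matrix subtractions, and the single place where something must actually be seen is the identity $l+m+lm=0$, which does all the work of turning the raw generators into the desired multiples of $c_2$ and $c_3$. The one point worth stating explicitly is that the listed generators genuinely lie in $N$, which is automatic since $[n,s_i]\in N$ by normality; this also reflects that the fractional coefficients $\frac{2}{l+2},\frac{2}{m+2},\frac{2}{4-\gamma}$ produce elements of the $\mathcal{R}$-module $N$, as was already ensured when $N$ was shown to be $\mathcal{R}$-free.
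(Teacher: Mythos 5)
Your proof is correct and takes essentially the same route as the paper: the paper likewise applies $s_{i}-I$ to the basis $(e_{1},e_{2})$ of the $\mathcal{R}$-module $N$ (using the matrices from the preceding proposition), observes that the resulting commutators are all $\mathcal{R}$-multiples of a single vector, and identifies that vector with the stated multiple of $c_{i}$. The only difference is bookkeeping: the paper writes out only the $s_{1}$ case and declares the other two analogous, whereas you treat all three explicitly, isolating the identity $l+m+lm=0$ (which the paper had already recorded in the equivalent form $l(m+1)=-m$, $m(l+1)=-l$) as the key to matching the $s_{2}$ and $s_{3}$ generators with $\frac{2}{l+2}c_{2}$ and $\frac{2}{m+2}c_{3}$.
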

\begin{proof}
Comme $N=<e_{1},e_{2}>$, on a $[N,s_{1}]=<[s_{1},e_{1}],[s_{1},e_{2}]>$ et 
\[ [s_{1},e_{1}]=-\frac{2}{m+2}e_{1}-\frac{2}{l+2}e_{2}=-\frac{2}{m+2}(\frac{2}{l+2},0)-\frac{2}{l+2}(0,\frac{2}{m+2})=\frac{2}{4-\gamma}c_{1},
\]
\[
[s_{1},e_{2}]=-\frac{2}{m+2}e_{1}-\frac{2}{l+2}e_{2}=\frac{2}{4-\gamma}c_{1}, 
\]
donc $[N,s_{1}]=<\frac{2}{4-\gamma}c_{1}>$. On montre de même les deux autres résultats.
\end{proof}
\begin{proposition}
La suite $(\star\star)$ es non scindée.
\end{proposition}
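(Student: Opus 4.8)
The plan is to show non-splitting by pushing the sequence forward to a \emph{central} extension, where everything linearizes, and then detecting the obstruction with the commutator subgroups just computed. Since $[N,G]=[N,s_{1}]+[N,s_{2}]+[N,s_{3}]$ is normal in $G$, set $\bar N:=N/[N,G]$ and $\bar G:=G/[N,G]$. Because $G$ acts on $N$ trivially modulo $[N,G]$, the subgroup $\bar N$ is central in $\bar G$, and we get a central extension $(\star\star\star):\ 1\to \bar N\to \bar G\to G'\to 1$. If $(\star\star)$ splits, then the image of a complement splits $(\star\star\star)$; hence it suffices to prove that $(\star\star\star)$ is non-split. A section of $(\star\star\star)$ is given by involutions $\bar\sigma(s_i')=\bar t_i\,\bar s_i$ (where $\bar s_i$ is the image of $s_i$ and $\bar t_i\in\bar N$) satisfying the relations of the presentation of $G'$ established above.

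First I would extract the constraints imposed by those relations, using centrality and the fact that $\bar s_i$ acts trivially on $\bar N$. From $s_i'^2=1$ one gets $\bar\sigma(s_i')^2=2\bar t_i+\overline{s_i^2}=2\bar t_i$, so $2\bar t_i=0$. Writing $\bar w:=\overline{(s_2s_3)^{r_1}}$ and $\bar q:=\bar s_2\bar s_3$, the element $\bar\sigma(s_2's_3')=(\bar t_2+\bar t_3)\bar q$ satisfies $(\bar\sigma(s_2's_3'))^{r_1}=r_1(\bar t_2+\bar t_3)+\bar w$, while the relation $(s_2's_3')^r=1$ holds automatically since $r=2r_1$ is even and $2\bar t_i=0$. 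The only binding relations are therefore the equalities $(s_1's_2')^2=(s_1's_3')^2=(s_2's_3')^{r_1}$. Using $(s_1s_2)^2=(s_2s_3)^{r_1}e_2$ and $(s_1s_3)^2=(s_2s_3)^{r_1}e_1$ (immediate from the definitions of $e_1,e_2$ together with $(s_2s_3)^{2r_1}=1$), one computes $(\bar\sigma(s_1's_2'))^2=2(\bar t_1+\bar t_2)+\bar w+\bar e_2$ and $(\bar\sigma(s_1's_3'))^2=2(\bar t_1+\bar t_3)+\bar w+\bar e_1$. Equating these with $r_1(\bar t_2+\bar t_3)+\bar w$ and cancelling $\bar w$ gives, after using $2\bar t_i=0$,
\[
\bar e_2=r_1(\bar t_2+\bar t_3),\qquad \bar e_1=r_1(\bar t_2+\bar t_3).
\]

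Consequently, a section can exist only if $\bar e_1=\bar e_2$ in $\bar N$, this common class being of the form $r_1(\bar t_2+\bar t_3)$ with $\bar t_2,\bar t_3$ of order dividing $2$. In particular $\bar e_1=\bar e_2=0$ when $r_1$ is even, and $\bar e_1=\bar e_2\in\bar N[2]$ when $r_1$ is odd. To finish I would describe $\bar N=N/[N,G]$ explicitly from the generators of the previous Proposition, namely $[N,s_1]=\langle\tfrac{2}{m+2}e_1+\tfrac{2}{l+2}e_2\rangle$, $[N,s_2]=\langle 2e_1+me_2\rangle$ and $[N,s_3]=\langle le_1+2e_2\rangle$ (the last two being $(1-s_2)e_1$ and $(1-s_3)e_2$, computed with $l(m+1)=-m$, $m(l+1)=-l$), and then verify that $\bar e_1,\bar e_2$ violate the conditions above.

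The main obstacle is exactly this last verification in the finite module $\bar N$: one must read off the classes $\bar e_1,\bar e_2$ from the three cyclic subgroups $[N,s_i]$ and check that the required identity fails. I expect the obstruction to be an essentially mod-$2$ phenomenon, produced by the denominators $\tfrac{2}{l+2},\tfrac{2}{m+2},\tfrac{2}{4-\gamma}$ appearing in $[N,s_i]$: these force $[N,G]$ to contain the $e_i$ only "up to a factor $2$," so that $\bar e_1$ (equivalently $\bar e_2$) is a nonzero class, contradicting $\bar e_1=0$ for $r_1$ even and, more delicately, $\bar e_1=\bar e_2\in\bar N[2]$ for $r_1$ odd. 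Geometrically this is the statement that $G$ has no global fixed point, i.e.\ the three reflecting mirrors of $s_1,s_2,s_3$ cannot be simultaneously translated, within the lattice $N$, so as to become concurrent; the calculation above is the arithmetic shadow of that impossibility.
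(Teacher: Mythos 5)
Your reduction is set up correctly as far as it goes: $[N,G]=[N,s_{1}]+[N,s_{2}]+[N,s_{3}]$, the extension $(\star\star\star)$ is central, a splitting of $(\star\star)$ would induce one of $(\star\star\star)$, and the relations of the presentation of $G'$ translate into the necessary conditions $2\bar t_{i}=0$ and $\bar e_{1}=\bar e_{2}=r_{1}(\bar t_{2}+\bar t_{3})$. But the proof stops exactly at the decisive point: you never verify that these conditions fail in $\bar N=N/[N,G]$; you only state that you \emph{expect} a mod-$2$ obstruction there. As written this is a plan, not a proof.

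Moreover the plan cannot be carried out, because passing to the coinvariants $N/[N,G]$ destroys the obstruction for most $r$. Take $r=6$, so $\gamma=lm=3$ and $r_{1}=3$. From the action formulas $s_{2}.e_{1}=-e_{1}-me_{2}$ and $s_{3}.e_{2}=-le_{1}-e_{2}$, the elements $me_{2}=-(s_{2}.e_{1}+e_{1})$ and $le_{1}=-(s_{3}.e_{2}+e_{2})$ lie in $N$, and a direct check using $lm=3$ gives
\[
e_{1}=[s_{2},-2e_{1}]+[s_{3},me_{2}],\qquad
e_{2}=[s_{3},-2e_{2}]+[s_{2},le_{1}],
\]
so $e_{1},e_{2}\in[N,G]$, i.e. $\bar e_{1}=\bar e_{2}=0$. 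Your necessary conditions are then satisfied by $\bar t_{1}=\bar t_{2}=\bar t_{3}=0$ and no contradiction is available; in fact $[N,G]$ is a $G$-stable subgroup of $N$ containing $e_{1}$ and $e_{2}$, hence equals $N$, and $(\star\star\star)$ is the split extension of $G'$ by the trivial group. The same collapse occurs for every even $r$ that is not a power of $2$: as the paper itself recalls, $4-\gamma$ is then invertible, and since the pair $(2e_{1}+me_{2},\,le_{1}+2e_{2})$ has determinant $4-lm=4-\gamma$ with respect to $(e_{1},e_{2})$, one again gets $e_{1},e_{2}\in[N,s_{2}]+[N,s_{3}]$. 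Your hoped-for mod-$2$ phenomenon can exist only when $r$ is a power of $2$, where $4-\gamma$ fails to be a unit. This is precisely the information the paper's proof retains and yours discards: the paper never centralizes the extension, but parametrizes the involutive lifts inside $G$ itself as $s_{i}(\lambda_{i}c_{i})$, with translation part confined to the line $Kc_{i}$, and the relation $(s_{1}'s_{3}')^{2}=(s_{2}'s_{3}')^{r_{1}}$ then forces $e_{2}=(0,\frac{2}{m+2})$ to equal a vector of the form $(\star,0)$ --- a contradiction valid for all choices of $\lambda_{i}$ and insensitive to what $N/[N,G]$ is. Any repair of your argument must keep track of the actual translation vectors rather than their classes modulo $[N,G]$, which in effect brings you back to the paper's proof.
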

\begin{proof}
Nous utilisons les résultats des corollaires 1 et 2 du chapitre 8. Soient $\lambda_{i} \,(1\leqslant i\leqslant3)$ des scalaires tels que $\lambda_{i}c_{i}\in N,\,\lambda_{i}\in K^{\star}$ et l'on pose $\sigma(s_{i})=s_{i}'=s_{i}(\lambda_{i}c_{i})$. On cherche les $\lambda_{i}$ de telle sorte que l'on ait les relations:
\[
s_{i}'^{2}=1\,(1\leqslant i\leqslant3),\, (s_{2}'s_{3}')^{r}=1,\, (s_{1}'s_{2}')^{2}=(s_{1}'s_{3}')^{2}=(s_{2}'s_{3}')^{\frac{r}{2}}.
\]
On appelle $z_{i}$ l'unique élément de $\mathcal{Z}'$ qui centralise $s_{j}$ et $s_{k}$ si $|\{i,j,k\}|=3$. Il suffit de vérifier (si possible) les relations:
\[
(s_{1}'s_{2}')^{2}=(s_{1}'s_{3}')^{2}=(s_{2}'s_{3}')^{r_{1}}
\]
où $r=2r_{1}$.\\
On a $(s_{1}'s_{2}')^{2}=z_{3}b_{2}$, $(s_{1}'s_{3}')^{2}=z_{2}b_{2}'$ et $(s_{2}'s_{3}')^{r_{1}}=z_{1}b_{r_{1}}''$. Nous avons, en rempla\c cant chaque $c_{i}$ par sa valeur: $c_{1}=(2,2)$, $c_{2}=(-2,l)$ et $c_{3}=(m,-2)$:\\
$b_{2}=(-2\lambda_{2},2(l+2)\lambda_{1}+2(l+1)\lambda_{2})$, $b_{2}'=(2(m+2)\lambda_{1}+2(m+1)\lambda_{3},-2\lambda_{3})$, $b_{r_{1}}''=-2(\lambda_{2},\lambda_{3})$.\\
Nous avons $z_{1}z_{3}+b_{2}'=b_{r_{1}}''$ 
\[
z_{1}z_{3}=(0,\frac{2}{m+2})=b_{r_{1}}''-b_{2}'=(\star,0)
\]
ce qui est impossible car $\frac{2}{m+2}\neq 0$. On ne peut donc pas trouver les scalaires $\lambda_{i}$ et la suite $(\star\star)$ est non scindée. (On rappelle que l'on emploie la notation additive pour $N$.)
\end{proof}
\section{Le groupe de réflexion complexe $G_{12}$ }
\subsection{Généralités}
On connait la structure du groupe de réflexion complexe $G_{12}$ ($\simeq Q\rtimes S_{3}$ où $Q$ est le groupe des quaternions). On sait aussi que le corps de définition de $G_{12}$ en tant que groupe de réflexion complexe est $K=\mathbb{Q}(\sqrt{-2})$. Comme $K_{0}$ est un sous-corps réel de $K$, on a $K_{0}=\mathbb{Q}$.

Le groupe $G_{12}$ n'est pas bien engendré: il est de rang $2$ mais nécessite trois réflexions comme générateurs. Nous allons partir de ces trois réflexions et appeler $G$ le groupe ainsi obtenu dans un espace à $3$ dimensions. On suppose en plus que $\Delta(G)=0$. Il résulte de ces considérations que $G$ va être quotient d'un groupe de Coxeter $W(p,q,r)$ où $\{p,q,r\}\subset\{3,4,6\}$ ($2$ n'intervient pas car le groupe est un groupe de réflexion complexe.)
\subsection{Recherche des triples $(p,q,r)$}
Nous cherchons les triples $(p,q,r)$ (à l'ordre près) pour lesquels $K=\mathbb{Q}(\sqrt{-2})$.
\begin{remark}
\begin{enumerate}
  \item Les triples $(4,4,3)$, $(4,4,4)$ et $(4,4,6)$ donnent des groupes de réflexion complexes imprimitifs. Ile ont été étudiés au paragraphe 1.
  \item Les triples $(3,3,3)$ et$(6,6,3)$ donnent un groupe diédral affine. Ils ont été étudiés au chapitre 6.
 \end{enumerate}
\end{remark}
\begin{proposition}
On garde les hypothèses et notations précédentes. On obtient le corps $K=\mathbb{Q}(\sqrt{-2})$ pour les cinq triples suivants: (on appelle $A_{i}$ le groupe correspondant)
\begin{enumerate}
  \item $(3,3,4)$. $\mathcal{P}(A_{1})=\mathcal{P}(1,1,2;\sqrt{-2},-\sqrt{-2})$. On pose $t=s_{1}s_{2}s_{3}$ et $t'$ l'image de $t$ dans $A_{1}'$: $P_{t'}(X)=X^{2}+\sqrt{-2}X-1$ :$t'^{4}=-1$
  \item $(3,3,6)$. $\mathcal{P}(A_{2})=\mathcal{P}(1,1,3;-1+\sqrt{-2},-1-\sqrt{-2})$. On pose $t=s_{1}s_{2}s_{3}$ et $t'$ l'image de $t$ dans $A_{1}'$: $P_{t'}(X)=X^{2}+\sqrt{-2}X-1$ :$t'^{4}=-1$
  \item $(3,4,6)$. $\mathcal{P}(A_{3})=\mathcal{P}(1,2,3;-2+\sqrt{-2},-2-\sqrt{-2})$. On pose $t=s_{1}s_{2}s_{3}$ et $t'$ l'image de $t$ dans $A_{1}'$: $P_{t'}(X)=X^{2}+\sqrt{-2}X-1$ :$t'^{4}=-1$. 
 \item $(6,6,4)$. $\mathcal{P}(A_{4})=\mathcal{P}(3,3,2;-4+\sqrt{-2},-4-\sqrt{-2})$. On pose $t=s_{1}s_{2}s_{3}$ et $t'$ l'image de $t$ dans $A_{1}'$: $P_{t'}(X)=X^{2}+\sqrt{-2}X-1$ :$t'^{4}=-1$.
 \item $(6,6,6)$. $\mathcal{P}(A_{5})=\mathcal{P}(1,1,2;-5+\sqrt{-2},-5-\sqrt{-2})$. On pose $t=s_{1}s_{2}s_{3}$ et $t'$ l'image de $t$ dans $A_{1}'$: $P_{t'}(X)=X^{2}+\sqrt{-2}X-1$ :$t'^{4}=-1$
\end{enumerate}
Ces cinq groupes sont isomorphes à un groupe noté $G$.
\end{proposition}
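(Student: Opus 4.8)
The plan is to treat the two assertions separately: first, that these five triples are exactly those for which $K=\mathbb{Q}(\sqrt{-2})$, with the stated parameter systems, and second, that the five resulting groups coincide.

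For the first assertion I would proceed by a finite enumeration. Up to order there are ten multisets with entries in $\{3,4,6\}$. Writing $\alpha,\beta,\gamma$ for the three pairwise parameters $C(s_i,s_j)=4\cos^{2}(\pi/n_{ij})$, each of them lies in $\{1,2,3\}$, these being the values at $n_{ij}=3,4,6$. By the Remark the three triples $(4,4,3)$, $(4,4,4)$, $(4,4,6)$ (imprimitive) and the two triples $(3,3,3)$, $(6,6,3)$ (dihedral affine) are already accounted for, which leaves precisely the five candidates of the statement.

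Next, exactly as in Proposition 1, I would impose $\Delta=0$. The resulting computation presents $l$ and $m$ as the two roots of $X^{2}-(l+m)X+lm$ with $lm=\alpha\beta\gamma$ and
\[
(l-m)^{2}=(l+m)^{2}-4lm=-8 ,
\]
so that $\sqrt{-2}\in K$ in every case and $l+m=-2\sqrt{\alpha\beta\gamma-2}$. Since $l+m$ is real, $K=\mathbb{Q}(\sqrt{-2})$ holds (and no larger field occurs) if and only if $l+m\in\mathbb{Q}$, that is, if and only if $\alpha\beta\gamma-2$ is a perfect square. Running through the five candidates gives $\alpha\beta\gamma-2\in\{0,1,4,16,25\}$, all squares, whence $l,m=-\sqrt{\alpha\beta\gamma-2}\pm\sqrt{-2}$; this yields the five parameter systems and, by direct substitution, the common characteristic polynomial $P_{t'}(X)=X^{2}+\sqrt{-2}\,X-1$ with $t'^{4}=-1$. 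For the five discarded triples one finds $\alpha\beta\gamma-2\in\{-1,2,6,7,10\}$, none of which is a square, which both confirms the list and shows that no further triple can occur.

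For the second assertion I would pass, as in \S1, to the linear quotient $A_i'=A_i/N_i$ by the translation subgroup. The equality $P_{t'}(X)=X^{2}+\sqrt{-2}\,X-1$ shows that in each $A_i'$ the element $t'=(s_1s_2s_3)'$ has the same minimal polynomial and order $8$; from this one identifies every linear part with $G_{12}$, and one then compares the translation lattices as modules over the ring generated by $\sqrt{-2}$. The hard part will be this last step: turning the agreement of the numerical invariants into a genuine isomorphism of the five groups. Rather than comparing invariants case by case, I expect the cleanest route is to fix one affine reflection group $G$ and to exhibit inside it five distinct systems of three reflections whose pairwise products have orders $(3,3,4)$, $(3,3,6)$, $(3,4,6)$, $(6,6,4)$ and $(6,6,6)$ respectively, equivalently to write explicit isomorphisms between the five presentations. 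This uniform identification, and not the field computation, is where the real work lies.
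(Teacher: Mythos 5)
Your treatment of the field computation has the logic inverted, and the inversion is not cosmetic. What $\Delta=0$ gives you for free is the \emph{sum}: $\alpha l+\beta m=8-2(\alpha+\beta+\gamma)\in\mathbb{Z}$, together with the product $(\alpha l)(\beta m)=\alpha\beta\gamma$; hence $(\alpha l-\beta m)^{2}=\bigl(8-2(\alpha+\beta+\gamma)\bigr)^{2}-4\alpha\beta\gamma$ is a quantity \emph{determined by the triple}, and it is not $-8$ ``in every case'': for the five discarded triples it equals $0$, $-12$, $-16$, $-12$, $0$ (giving the degenerate real cases, $\mathbb{Q}(\sqrt{-3})$, $\mathbb{Q}(i)$, $\mathbb{Q}(\sqrt{-3})$), and it equals $-8$ exactly for the five retained ones --- which is precisely the fact to be verified, not a premise one may assume. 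Consequently your criterion ``$K=\mathbb{Q}(\sqrt{-2})$ iff $\alpha\beta\gamma-2$ is a perfect square'' is unsound as derived: it is valid only in conjunction with $(\alpha l-\beta m)^{2}=-8$, i.e.\ only when $\alpha\beta\gamma-2$ equals the \emph{specific} square $\bigl(4-(\alpha+\beta+\gamma)\bigr)^{2}$; that the naive square test nevertheless selects the right five triples is an accident of this particular table. The correct (and just as short) argument is to observe that $K=\mathbb{Q}\Bigl(\sqrt{\bigl(8-2(\alpha+\beta+\gamma)\bigr)^{2}-4\alpha\beta\gamma}\Bigr)$ and to evaluate this for each candidate; this is what the paper's ``les calculs n'offrent pas de difficultés'' amounts to.

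The larger gap is that the isomorphism of the five groups --- the only part the paper actually argues --- is left by you as a declared plan (``where the real work lies'') rather than carried out, and your intermediate suggestion would not close it anyway. Knowing that each $A_i'$ contains an element with characteristic polynomial $X^{2}+\sqrt{-2}X-1$ does not identify $A_i'$ with $G_{12}$ (that identification is Théorème 1 of the paper, proved by a separate quaternion-group argument), and even granting $A_i'\simeq G_{12}$ together with an isomorphism of the lattices $N_i$ as $\mathbb{Z}[\sqrt{-2}]$-modules, one cannot conclude $A_i\simeq A_j$: an extension is not determined by its kernel and its quotient. What the paper does --- and what your last sentence gestures at without executing --- is to exhibit inside a given $A_i$ a new generating triple of reflections with prescribed $C$-values: in $A_1$, since $\alpha l+\beta m=0$, one computes $C\bigl(s_1,s_2(s_2s_3)^2\bigr)=3=C\bigl(s_1,s_3(s_2s_3)^2\bigr)$ and uses $\langle s_2,s_3\rangle=\langle s_2(s_2s_3)^2,s_3(s_2s_3)^2\rangle$ to see that $A_1$ is the affine quotient of $W(6,6,4)$, hence $A_1\simeq A_4$ (the uniqueness of that quotient being supplied by Proposition 9: the parameters and the single relation equivalent to $\Delta=0$ leave no freedom); then $A_4\simeq A_3$ via a triple realizing $W(3,6,4)$, and $A_2\simeq A_5$, $A_2\simeq A_3$ similarly. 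Without these explicit triples and $C$-computations your proposal establishes neither assertion completely.
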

\begin{proof}
Les calculs pour les valeurs des paramètres n'offrent pas de difficultés. Nous montrons maintenant que ces cinq groupes sont isomorphes.\\
1) Soit $A_{1}$. Nous avons $\alpha=\beta=1$, $\gamma=2$ et $\alpha l+\beta m=0$ donc $C(s_{1},s_{2}(s_{2}s_{3})^{2})=4-\alpha=3=4-\beta =C(s_{1},s_{3}(s_{2}s_{3})^{2})$. Comme $<s_{2},s_{3}>=<s_{2}(s_{2}s_{3})^{2},s_{3}(s_{2}s_{3})^{2}>$ nous obtenons $A_{1}=<s_{1},s_{2}(s_{2}s_{3})^{2},s_{3}(s_{2}s_{3})^{2}>$ et $A_{1}$ est un quotient de $W(6,6,4)$. Nous voyons donc que groupe $A_{4}$ est isomorphe au groupe $A_{1}$.\\
2) Soit $A_{4}$. Nous avons $\alpha=\beta=3$, $\gamma=2$ et $\alpha l+\beta m=-4$ donc $C(s_{1},s_{2}(s_{2}s_{3})^{3})=4-\alpha=1$, $C(s_{2},s_{3}(s_{1}s_{3})^{3})=4-\gamma=2$ et $A_{4}$ est un quotient de $W(3,6,4)$. Le groupe $A_{4}$ est isomorphe au groupe $A_{3}$.\\
3)Soit $A_{2}$. Nous avons $\alpha=\beta=1$, $\gamma=3$ et $\alpha l+\beta m=-2$ donc $C(s_{1},s_{2}(s_{2}s_{3})^{3})=3=C(s_{1},s_{3}(s_{1}s_{3})^{3})$ et nous voyons ainsi que $A_{2}$ est isomorphe à $A_{5}$. Nous avons $C(s_{1},s_{2}^{s_{3}})=1+3-2=2$, $<s_{2},s_{3}>=<_{2}^{s_{3}},s_{3}>$ donc $A_{2}$ est un quotient de $W(4,6,3)$.Le groupe $A_{2}$ est isomorphe au groupe $A_{3}$.
\end{proof}
\begin{proposition}
Des ``présentations'' du groupe $G$ sont les suivantes (avec les notations précédentes):
\begin{enumerate}
  \item $A_{1}$ . $(w(3,3,4),(s_{1}s_{2}^{s_{3}})^{6}=1).$
  \item $A_{2}$ . $(w(3,3,6),(s_{1}s_{2}^{s_{3}})^{4}=1).$
  \item $A_{3}$ . $(w(3,4,6),(s_{1}s_{2}^{s_{3}})^{6}=1).$
  \item $A_{4}$ . $(w(6,6,4),(s_{1}s_{2}^{s_{3}})^{3}=1).$
  \item $A_{5}$ . $(w(6,6,6),(s_{1}s_{2}^{s_{3}})^{4}=1).$
\end{enumerate}
\end{proposition}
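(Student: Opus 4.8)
The plan is to reduce each of the five ``presentations'' to a single order computation, exactly as was done for $W(4,4,r)$ in the proposition above. Recall that for two reflections the invariant $C$ determines the order of their product through $C=4\cos^{2}(\pi/n)$, so that $C=1$, $C=2$, $C=3$ correspond respectively to orders $3$, $4$, $6$. Since the preceding proposition already exhibits $G$, in each realization $A_{i}$, as a quotient of the Coxeter group $W(p,q,r)$, all the relations $w(p,q,r)$ hold in $G$ automatically; what remains is to verify that the single extra relation $(s_{1}s_{2}^{s_{3}})^{n_{i}}=1$ holds, and that imposing it on $W(p,q,r)$ collapses the group down to exactly $G$.

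First I would compute $C(s_{1},s_{2}^{s_{3}})$ as a function of the parameters. The element $s_{2}^{s_{3}}=s_{3}s_{2}s_{3}$ is the reflection along $s_{3}(a_{2})$, so $C(s_{1},s_{2}^{s_{3}})$ is the product of the two Cartan pairings between $a_{1}$ and $s_{3}(a_{2})$; expanding this by the same method used for $C(s_{1},s_{3}^{s_{2}})$ in the $W(4,4,r)$ case and simplifying with the relations among the parameters gives a polynomial of the shape $C(s_{1},s_{2}^{s_{3}})=\alpha+\beta\gamma+(l+m)$. This matches the value $1+3-2=2$ already obtained for $A_{2}$ in the proof of the previous proposition, a convenient cross-check. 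Substituting the five parameter systems $\mathcal{P}(A_{i})$ then yields
\[
\begin{aligned}
A_{1}:\ & 1+1\cdot 2+0=3, & A_{2}:\ & 1+1\cdot 3-2=2, & A_{3}:\ & 1+2\cdot 3-4=3,\\
A_{4}:\ & 3+3\cdot 2-8=1, & A_{5}:\ & 3+3\cdot 3-10=2,
\end{aligned}
\]
reading off the orders $6,4,6,3,4$, i.e. the exponents claimed in the statement. In carrying this out I would first confirm the internal consistency of the five rows, since for the triple $(6,6,6)$ the three $C$-values must be $(3,3,3)$; thus $\mathcal{P}(A_{5})$ should be read as $\mathcal{P}(3,3,3;-5+\sqrt{-2},-5-\sqrt{-2})$, with $l+m=-10$, for the computation above to apply.

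The remaining, and genuinely delicate, point is the converse: that adjoining the one relation $(s_{1}s_{2}^{s_{3}})^{n_{i}}=1$ to $w(p,q,r)$ does not leave a group strictly larger than $G$. Here I would argue exactly as in the $W(4,4,r)$ proposition, where ``$C(s_{1},s_{3}^{s_{2}})=2$'' was shown to be equivalent to ``$l+m=-\gamma$'', i.e. to $\Delta=0$. With $\alpha,\beta,\gamma$ fixed by the triple $(p,q,r)$, the fundamental construction leaves $l+m$ as the essential free parameter, and the relation $C(s_{1},s_{2}^{s_{3}})=4\cos^{2}(\pi/n_{i})$ forces $l+m$ to the single value making the representation reducible, i.e. $\Delta(G)=0$; over the field $\mathbb{Q}(\sqrt{-2})$ this recovers $G$ and nothing larger.

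I expect the main obstacle to be precisely this equivalence $C(s_{1},s_{2}^{s_{3}})=4\cos^{2}(\pi/n_{i})\iff\Delta(G)=0$ in each of the five cases --- that is, checking that the fixed discriminant polynomial $\Delta$, evaluated at the case's values of $\alpha,\beta,\gamma$, vanishes exactly at the value of $l+m$ imposed by the relation. The order computations themselves, and the verification that the relation \emph{holds} in $G$, are routine; it is the reverse implication, guaranteeing minimality of the presentation, that requires the $\Delta$-computation and justifies the quotation marks the author places around ``présentation''.
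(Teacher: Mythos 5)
Your proposal is correct and follows essentially the same route as the paper: one computes $C(s_{1},s_{2}^{s_{3}})=\alpha+\beta\gamma+\alpha l+\beta m$ case by case, reads off the orders $6,4,6,3,4$, and observes that each relation is equivalent to $\Delta(G)=0$, the conclusion following because $\alpha$, $\beta$, $\gamma$ are each forced to a unique value by the triple $(p,q,r)$. The only slip is notational: your displayed general formula should have $\alpha l+\beta m$ rather than $l+m$ as its last term (your numerical table already uses the correct values, e.g.\ $l+2m=-4$ for $A_{3}$ and $3l+3m=-8$ for $A_{4}$).
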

\begin{proof}
Comme $C(s_{1},s_{2}^{s_{3}})=\alpha+\beta\gamma+\alpha l+\beta m$ on a les équivalences:
\begin{itemize}
  \item $A_{1}$ . $w(3,3,4)$, $C(s_{1},s_{2}^{s_{3}})=3\Leftrightarrow l+m=0 \Leftrightarrow \Delta(G)=0$.
  \item $A_{2}$ . $w(3,3,6)$, $C(s_{1},s_{2}^{s_{3}})=2\Leftrightarrow l+m=-2 \Leftrightarrow \Delta(G)=0$.
  \item $A_{3}$ . $w(3,4,6)$, $C(s_{1},s_{2}^{s_{3}})=3\Leftrightarrow l+2m=-4 \Leftrightarrow \Delta(G)=0$.
  \item $A_{4}$ . $w(6,6,4)$, $C(s_{1},s_{2}^{s_{3}})=1\Leftrightarrow 3l+3m=-8 \Leftrightarrow \Delta(G)=0$.
  \item $A_{5}$ . $w(6,6,6)$, $C(s_{1},s_{2}^{s_{3}})=2\Leftrightarrow 6l+6m=-10 \Leftrightarrow \Delta(G)=0$.
\end{itemize}
d'où le résultat car il n'y a qu'une valeur possible pour chaqun de $\alpha$, $\beta$ et $\gamma$.
\end{proof}
\subsection{Le groupe $G'$ est isomorphe au groupe $G_{12}$}
\begin{theorem}
On garde les hypothèses et notations de la proposition 8. Alors $G'=G/N$ est isomorphe au groupe de réflexion $G_{12}$.
\end{theorem}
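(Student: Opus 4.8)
The plan is to follow the template already established in Section~2 for the imprimitive case. Since we have imposed $\Delta(G)=0$, the affine representation $R$ of $G$ on the three‑dimensional space $M$ is reducible: it fixes a line $\langle b\rangle$, and the translation subgroup $N$ acts trivially on the quotient, so $R$ induces a two‑dimensional representation $R'$ of $G'=G/N$ on $M'=M/\langle b\rangle$. First I would check, exactly as was done for $s_2',s_3'$ in Section~2, that the images $s_1',s_2',s_3'$ are genuine reflections of $M'$: each is of order $2$ and fixes a line. Hence $G'$ is a rank‑$2$ group generated by three reflections of order $2$, realised over $K=\mathbb{Q}(\sqrt{-2})$ with $K_0=\mathbb{Q}$.

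The decisive invariant is the element $t=s_1s_2s_3$. By Proposition~8 its image $t'$ has characteristic polynomial $P_{t'}(X)=X^2+\sqrt{-2}\,X-1$, whose roots are primitive $8$‑th roots of unity; thus $t'^4=-\mathrm{Id}$ and $t'$ has order $8$, while $\operatorname{tr}(t')=-\sqrt{-2}\notin\mathbb{R}$. The non‑real trace shows that $G'$ is not conjugate to a real reflection group, so it is not a dihedral group $I_2(m)$; combined with the fact, recalled in the Généralités, that $G_{12}$ is of rank~$2$ but genuinely requires three reflections as generators, this singles out $G'$ as a primitive complex reflection group needing three order‑$2$ generators and possessing a regular element of order~$8$.

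I would then identify $G'$ with $G_{12}$ by matching a presentation. Starting from one of the presentations of $G$ obtained above, say $A_1=(w(3,3,4),(s_1s_2^{s_3})^6=1)$, and adjoining the relations that collapse $N$, I would derive that $s_1',s_2',s_3'$ satisfy $s_i'^2=1$ together with the period‑four braid relation $s_1's_2's_3's_1'=s_2's_3's_1's_2'=s_3's_1's_2's_3'$, which is precisely the Broué–Malle–Rouquier presentation of $G_{12}$. This exhibits $G'$ as a quotient of $G_{12}$. To show the quotient is trivial I would compute $\lvert G'\rvert=48$, consistent with the degrees $6,8$ of $G_{12}$ and with the order‑$8$ regular element $t'$; since the only irreducible rank‑$2$ complex reflection group generated by order‑$2$ reflections, defined over $\mathbb{Q}(\sqrt{-2})$ and containing an element of order~$8$, is $G_{12}$ in the Shephard–Todd list, the isomorphism follows.

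The main obstacle is this last step: proving that passing to the quotient by $N$ produces exactly $G_{12}$ and not a proper further collapse. Concretely this means verifying the period‑four braid identity directly on the explicit $2\times2$ matrices of $s_1',s_2',s_3'$ acting on $M'$, checking that the reflection group they generate is finite of order $48$, and confirming that $R'$ is faithful. Once $\lvert G'\rvert=48$ is pinned down, the field condition $K=\mathbb{Q}(\sqrt{-2})$, $K_0=\mathbb{Q}$ together with the three‑reflection generation and the order‑$8$ element excludes every other candidate in the classification, giving $G'\simeq G_{12}$.
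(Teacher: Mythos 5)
Your preparatory observations are correct as far as they go: $R'$ is two-dimensional, $t'$ satisfies $P_{t'}(X)=X^{2}+\sqrt{-2}\,X-1$ (Proposition 8), hence $t'^{4}=-\mathrm{Id}$, $t'$ has order $8$, and its trace is not real. The genuine gap is that every step which actually proves the theorem is announced rather than performed, and the tool you invoke to close the argument is not available at the point where you invoke it. The Shephard--Todd classification lists \emph{finite} irreducible complex reflection groups, so the sentence ``the only irreducible rank-$2$ group generated by order-$2$ reflections, defined over $\mathbb{Q}(\sqrt{-2})$, with an element of order $8$, is $G_{12}$'' may only be used once you know $G'$ is finite (and, even then, it requires excluding the imprimitive families $G(2m,m,2)$, which also are generated by involutive reflections and contain elements of order $8$, by a field computation you do not make). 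You propose to obtain finiteness from $\lvert G'\rvert=48$, but that count is precisely what you set aside as ``the main obstacle''; as written the argument is circular: the classification needs finiteness, finiteness needs the order count, and the order count is never done. Finiteness is not a formality in this setting: in this very paper, for the triples of Section 4 (still under $\Delta(G)=0$), the analogous quotients are \emph{infinite} (Propositions 11, 12 and 14 assert that $A'$, $B'$, $C'$ are infinite). Likewise, the derivation of the period-four braid relation $s_1's_2's_3's_1'=s_2's_3's_1's_2'=s_3's_1's_2's_3'$ from the relations of $A_{1}$ is asserted (``I would derive'') but never carried out, and it is not a relation one can read off formally from $w(3,3,4)$ together with $(s_1s_2^{s_3})^{6}=1$.

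Compare with the paper's proof, which avoids both difficulties by a short structural argument inside $A_1'$: the element $z=(s_2's_3')^{2}$ is central (this follows because $(s_1(s_2s_3)^{2})^{2}\in N$, so $z'$ commutes with $s_1'$, and $z'$ is already central in the dihedral group $\langle s_2',s_3'\rangle$); then $Q=\langle s_2's_3',\,s_1'(s_2's_3')s_1'\rangle$ is a quaternion group of order $8$, normal in $A_1'$, and $A_1'=Q\,\langle s_1',s_2'\rangle$ with $Q\cap\langle s_1',s_2'\rangle=\{1\}$, whence $A_1'\simeq Q\rtimes S_3\simeq G_{12}$. Finiteness and the isomorphism come out simultaneously, with no appeal to the classification. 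If you want to salvage your route, there is a cheaper finish than computing $\lvert G'\rvert=48$: once the braid relations are actually verified on the explicit $2\times2$ matrices, $G'$ is a quotient of $G_{12}\simeq Q\rtimes S_{3}$, whose proper quotients are $S_4$, $S_3$, $C_2$ and $1$; none of these contains an element of order $8$, so the existence of $t'$ of order $8$ forces the surjection $G_{12}\to G'$ to be an isomorphism. But that matrix verification (or some substitute for it) is the irreducible computational core of your approach and cannot remain a promise.
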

\begin{proof}
Nous montrons le résultat pour le groupe $A_{1}$. Posons $z:=(s_{2}'s_{3}')^{2}$. Comme $s_{1}'z\in N(A_{1})$ et comme $z$ est d'ordre $2$, nous voyons que $z$ est central dans $A_{1}'$. Maintenant $s_{2}'s_{3}'$ est d'ordre $4$ et $(s_{1}'(s_{2}'s_{3}')s_{1}')^{2}= (s_{2}'s_{3}')^{2}$\\ donc $Q:=<(s_{2}'s_{3}'),(s_{1}'(s_{2}'s_{3}')s_{1}')>$ est un groupe de quaternion d'ordre $8$. Le groupe $Q$ est normalisé par $s_{1}'$ et aussi par $s_{2}'$ puisque 
\[
s_{2}'s_{1}'(s_{2}'s_{3}')s_{1}'s_{2}'=(s_{2}'s_{1}'s_{2}')s_{3}'s_{1}'s_{2}'=s_{1}'s_{2}'s_{1}'s_{3}'s_{1}'s_{2}'=s_{1}'s_{2}'s_{3}'s_{1}'s_{3}'s_{2}'\in Q.
\]
Comme$s_{2}'s_{3}'\in Q$, $s_{3}'$ normalise $Q$ et $Q\lhd A_{1}'$. Il est clair que $A_{1}'=Q<s_{1}',s_{2}'>$ et $Q\bigcap<s_{1}',s_{2}'>=\{1\}$ donc $A_{1}'\simeq Q \rtimes S_{3}\simeq G_{12}$.
\end{proof}
\begin{proposition}
Des présentations du groupe $G_{12}$ sont les suivantes (avec les notations précédentes):
\begin{enumerate}
  \item $A_{1}$ . $(w(3,3,4),(s_{2}s_{3})^{2}=(s_{1}s_{2}^{s_{3}})^{3}).$
  \item $A_{2}$ . $(w(3,3,6),(s_{2}s_{3})^{3}=(s_{1}s_{2}^{s_{3}})^{2}).$
  \item $A_{3}$ . $(w(3,4,6),(s_{1}s_{3})^{2}=(s_{2}s_{3})^{3}).$
  \item $A_{4}$ . $(w(6,6,4),(s_{1}s_{2})^{3}=(s_{2}s_{3})^{2}=(s_{1}s_{3})^{3}).$
  \item $A_{5}$ . $(w(6,6,6),(s_{1}s_{2}^{s_{3}})^{2}=(s_{1}s_{2})^{3}=(s_{2}s_{3})^{3}=(s_{1}s_{3})^{3}).$
\end{enumerate}
\end{proposition}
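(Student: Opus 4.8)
The plan is to show, for each case $A_i$, that the abstract group $H$ defined by the displayed presentation is isomorphic to $G_{12}$, by combining a surjection $H\twoheadrightarrow G_{12}$ with the matching order bound $|H|\leqslant 48=|G_{12}|$.

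I would first treat $A_1$ as the model. For the surjection it suffices to check that every displayed relation holds in $G'=G/N\simeq G_{12}$ (le théorème 1): the Coxeter relations $w(3,3,4)$ hold already in $G$, and writing $z=(s_2's_3')^2$ for the central involution generating $Z(G_{12})$, a short computation in the spirit of the quaternion analysis in the proof of le théorème 1 gives $(s_1's_2'^{s_3'})^3=z$. Hence $G_{12}$ is a quotient of $H$. For the reverse bound I work inside $H$: put $z:=(s_2s_3)^2$, so $z^2=1$ because $(s_2s_3)^4=1$, and $s_2,s_3$ centralise $z$ by the dihedral relation $s_j(s_2s_3)^2s_j=(s_2s_3)^{-2}$. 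The role of the extra relation is to make $s_1$ centralise $z$ as well: with $c=s_1s_2^{s_3}=s_1s_3s_2s_3$ one has $s_1cs_1=c^{-1}$, whence $s_1zs_1=s_1c^3s_1=c^{-3}=z^{-1}=z$. Thus $z\in Z(H)$ and $|\langle z\rangle|\leqslant 2$. In $\overline H:=H/\langle z\rangle$ the relation $(s_2s_3)^2=z$ becomes $(\bar s_2\bar s_3)^2=1$, so $\overline H$ is generated by three involutions with bonds $(m_{12},m_{13},m_{23})=(3,3,2)$, i.e. it is a quotient of the finite Coxeter group of type $A_3\simeq S_4$; hence $|\overline H|\leqslant 24$ and $|H|\leqslant 48$. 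Comparing with the surjection yields $H\simeq G_{12}$.

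The cases $A_3$ and $A_4$ run identically: one takes for $z$ the common value of the two (resp. three) words in the relation, checks centrality by reading off, for each generator, a dihedral expression for $z$, and observes that killing $z$ turns exactly one bond into a $2$ while leaving two bonds equal to $3$, so that $\overline H$ is again a quotient of an $A_3\simeq S_4$. For $A_3$, $z=(s_1s_3)^2=(s_2s_3)^3$ gives $(m_{12},m_{13},m_{23})=(3,2,3)$; for $A_4$, $z=(s_2s_3)^2=(s_1s_2)^3=(s_1s_3)^3$ gives $(3,3,2)$. In both subcases $\overline H$ has order at most $24$, hence $|H|\leqslant 48$ and $H\simeq G_{12}$ as before.

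The cases $A_2$ and $A_5$ are the delicate ones, and here lies the main obstacle: killing the central involution turns all three bonds $s_is_j$ into $3$'s, so the naive quotient $\overline H$ is merely a quotient of the affine group of type $\widetilde A_2$ rather than of $S_4$, and one cannot bound $|H|$ directly in the generators $(s_1,s_2,s_3)$. The remedy is to pass to the alternative reflection generators furnished by la proposition 8 — which realises $A_2,\dots,A_5$ as one and the same group $G$ through explicit substitutions involving $s_2^{s_3}$, and which descend to $G'=G_{12}$. In those generators the extra relation is exactly the $A_3$-type relation collapsing one bond to a $2$, so the order bound reduces to the model computation above; equivalently, transporting the already-proved $A_1$ (or $A_3$) presentation along the substitutions of la proposition 8 and simplifying with the relevant Coxeter relations, as in la proposition 9, produces the presentations for $A_2$ and $A_5$. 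The crux throughout is thus the order bound: securing the centrality of $z$ from the relation and exhibiting the generating set in which $H/\langle z\rangle$ is visibly a quotient of $S_4$.
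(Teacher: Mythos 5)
Your proposal is correct, and it supplies genuine content where the paper has essentially none: the paper's entire proof is the sentence ``Elles ne présentent pas de difficultés, car elles sont toutes semblables à celle pour $A_{1}$'', so the two-sided scheme you use (a surjection $H\twoheadrightarrow G_{12}$ from checking the relations in $G'=G/N$, plus the bound $|H|\leqslant 48$) is exactly the kind of argument the author leaves implicit, modelled on the proofs of la proposition 4 and le théorème 1. Where you differ from what the paper suggests is the mechanism of the order bound: the pattern of le théorème 1 would rebuild the decomposition $Q\rtimes S_{3}$ inside the abstract group $H$, whereas you extract the central involution $z$ directly from the extra relation and observe that $H/\langle z\rangle$ satisfies the Coxeter relations of type $A_{3}$, hence has order at most $24$; this is cleaner and, more importantly, it forces you to notice the real subtlety that the paper's ``toutes semblables'' hides, namely that for $A_{2}$ and $A_{5}$ killing $z$ yields the affine diagram $\tilde A_{2}$ and gives no bound in the original generators. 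Your remedy is sound: in the generators $(s_{1},s_{2}^{s_{3}},s_{3})$ of $H$ one checks $(s_{1}s_{3})^{3}=1$, $(s_{2}^{s_{3}}s_{3})^{3}=(s_{3}s_{2})^{3}=z^{-1}=z$ and $(s_{1}s_{2}^{s_{3}})^{2}=z$, so $H$ satisfies all the defining relations of the (already established) $A_{3}$-type presentation, whence $|H|\leqslant 48$. Two small points you should make explicit to be fully rigorous: first, in the surjection step one must know that the orders do not drop in $G'$ (e.g.\ that $s_{1}'s_{2}'^{s_{3}'}$ has order $6$ and not $3$); this follows from the trace computation $\mathrm{tr}_{M'}(s_{1}'s_{2}'^{s_{3}'})=C(s_{1},s_{2}^{s_{3}})-2=1$ together with the fact, used repeatedly in the paper, that $SL(M')$ has a unique involution $-\mathrm{id}$, so that all the words in each displayed relation equal $-\mathrm{id}$; second, for $A_{2}$ and $A_{5}$ the clean statement is the generator change just described, rather than the vaguer ``transport of presentations'' along la proposition 8, but this is a matter of exposition, not of substance.
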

\begin{proof}
Elles ne présentent pas de difficultés, car elles sont toutes semblables à celle pour $A_{1}$.
\end{proof}
\subsection{structure de $N(G)$ et la suite $(\star\star)$}
\begin{theorem}
Le groupe $N(G)$ est un $\mathbb{Z}[\sqrt{-2}]$-module libre de rang $2$ et un $\mathbb{Z}$-module libre de rang $4$.
\end{theorem}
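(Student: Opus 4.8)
The plan is to reproduce, for $G_{12}$, the computation that established Proposition 5 for the imprimitive groups $G(2r,r,2)$. Since the five groups $A_1,\dots,A_5$ of Proposition 8 are all isomorphic to the common group $G$, and since $N(G)=\ker(G\to G')$ is an intrinsic invariant of $G$, I may carry out the calculation in whichever representative is most convenient; I take $A_1$, for which $\alpha=\beta=1$, $\gamma=2$, $l=\sqrt{-2}$ and $m=-\sqrt{-2}$, so that the ring in play is $\mathbb{Z}[\sqrt{-2}]=\mathbb{Z}[l]$, which here is the full ring of integers of $K=\mathbb{Q}(\sqrt{-2})$.

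First I would exhibit two explicit translations $e_1,e_2$ generating $N(G)$, chosen as suitable products of the generators $s_1,s_2,s_3$ that become trivial in $G'\simeq G_{12}$ (the analogues of the elements $e_1,e_2$ used in Proposition 5). Writing each $e_i$ as a concrete translation vector and expressing the root vectors $c_1,c_2,c_3$ in the basis $(e_1,e_2)$, I then read off the $2\times 2$ matrices of $s_1,s_2,s_3$ acting on $N\otimes K$, exactly as for the matrices displayed in Proposition 5.

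The heart of the argument — and the step I expect to be the main obstacle — is to verify that, with this choice of generators, the three matrices have all their entries in $\mathbb{Z}[\sqrt{-2}]$; equivalently that $s_i\cdot e_j\in\mathbb{Z}[\sqrt{-2}]\,e_1+\mathbb{Z}[\sqrt{-2}]\,e_2$ for every $i,j$. This is precisely the point at which the imprimitive case had to be split into sub-cases in order to clear the denominator $4-\gamma$; here $4-\gamma=2=-l^2$ ramifies in $\mathbb{Z}[\sqrt{-2}]$, so the $e_i$ must be normalised so that each denominator is absorbed against the generators. Once this integrality is in hand, $N(G)$ is generated as a $\mathbb{Z}[\sqrt{-2}]$-module by $e_1$ and $e_2$.

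Finally, as the explicit vectors $e_1,e_2$ are $K$-linearly independent they form a free family, so $N(G)$ is a free $\mathbb{Z}[\sqrt{-2}]$-module of rank $2$; alternatively, since $\mathbb{Z}[\sqrt{-2}]$ is a principal ideal domain, any finitely generated torsion-free module over it is automatically free of rank equal to its $K$-dimension, which is $2$. In either case, because $\mathbb{Z}[\sqrt{-2}]=\mathbb{Z}\oplus\mathbb{Z}\sqrt{-2}$ is a free $\mathbb{Z}$-module of rank $2$, it follows that $N(G)$ is a free $\mathbb{Z}$-module of rank $2\cdot 2=4$, which is the second assertion.
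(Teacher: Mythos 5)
Your proposal is a strategy outline whose central steps are announced but never carried out: you do not exhibit the translations $e_1,e_2$, you do not compute the matrices of $s_1,s_2,s_3$ in the basis $(e_1,e_2)$, and you do not perform the integrality check that you yourself identify as ``the main obstacle''. That check is precisely where the content of the theorem lies, so as written the argument has a hole exactly where you flagged it. There is also a concrete obstruction to transferring the recipe of Proposition 5 literally: in $W(4,4,r)$ the three elements $(s_1s_2)^2$, $(s_1s_3)^2$, $(s_2s_3)^{r_1}$ all map to the same central involution $z'$ of $G'$, which is what makes $e_1=(s_2s_3)^{r_1}(s_1s_3)^2$ and $e_2=(s_2s_3)^{r_1}(s_1s_2)^2$ translations; in $A_1$ (quotient of $W(3,3,4)$) the products $s_1s_2$ and $s_1s_3$ have order $3$, so among the standard powers only $z_1=(s_2s_3)^2$ maps to the central involution of $G_{12}$, and translations must instead be built from $z_1$ together with its conjugates (e.g. $({}^{s_1}z_1)z_1=(s_1z_1)^2$). ``Suitable products of the generators'' therefore requires an actual construction, which is missing.

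The paper's proof shows, moreover, that the difficulty you anticipate (clearing the denominators $l+2$, $m+2$, $4-\gamma$, which forced the case split in Proposition 5) does not arise here, and this observation is the idea your proposal lacks. Since $\gamma=2$, the coefficient $\frac{-2}{4-\gamma}=-1$ is a unit, so the identity $(s_1z_1)^2=\frac{-2}{4-\gamma}c_1=-c_1$ puts $c_1$ itself in $N(A_1)$; then $s_2.c_1-c_1=c_2$ and $s_3.c_1-c_1=c_3$ lie in $N(A_1)$ as well, because $N$ is stable under the action of $G$. All the structure constants in sight ($l=\sqrt{-2}$, $m=-\sqrt{-2}$, $\gamma=2$) already belong to $\mathbb{Z}[\sqrt{-2}]$, and $N(G)$ is stable under multiplication by $\alpha l=\sqrt{-2}$ by the general theory of the earlier parts, so $N(G)$ is exactly the $\mathbb{Z}[\sqrt{-2}]$-span of $c_1,c_2$: free of rank $2$, hence free of rank $4$ over $\mathbb{Z}$, as your (correct) final reduction shows. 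To repair your write-up you should either incorporate this shortcut, or genuinely carry out the matrix computation you postponed; at present neither is done.
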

\begin{proof}
On obtient la structure de $N(G)$ en considérant le groupe $A_{1}$. Comme $\alpha=\beta=1$ et $\gamma=2$ dans ce cas, nous voyons que $(s_{1}z_{1})^{2}=\frac{-2}{4-\gamma}c_{1}=-c_{1}$, donc $c_{1}\in N(A_{1})$. Puis $s_{2}.c_{1}-c_{1}=c_{2}$ et $s_{3}.c_{1}-c_{1}=c_{3}$, donc $c_{1},c_{2},c_{3}$ sont dans $N(A_{1})$, d'où tous les résultats car $N(G)$ est stable par multiplication par $\alpha l=\sqrt{-2}$.
\end{proof}
\begin{theorem}
La suite $(\star\star)$ est non scindée.
\end{theorem}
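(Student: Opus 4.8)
La stratégie sera la même que pour la proposition analogue du paragraphe~1 (non-scindement de $(\star\star)$ pour $G(2r,r,2)$). Un scindage de $(\star\star)$ équivaut à l'existence d'une section $\sigma : G' \to G$. Comme $G'$ est engendré par $s_1', s_2', s_3'$, une telle section est déterminée par des relèvements $s_i'' := \sigma(s_i')$, et la relation $(s_i'')^2 = 1$ impose que la partie de translation de $s_i''$ appartienne au sous-espace propre de $s_i$ relatif à la valeur propre $-1$, c'est-à-dire à la droite $\langle c_i \rangle$. Je chercherais donc $s_i'' = s_i(\lambda_i c_i)$ avec $\lambda_i c_i \in N(G)$ et $\lambda_i \in K^\star$, les relations $(s_i'')^2 = 1$ étant alors automatiques. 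Il ne reste qu'à imposer les relations non triviales de l'une des présentations de $G' \simeq G_{12}$ obtenues à la proposition précédente; je choisirais celle de $A_4$ sur $w(6,6,4)$, car ses relations supplémentaires $(s_1 s_2)^3 = (s_2 s_3)^2 = (s_1 s_3)^3$ ne font intervenir aucune conjugaison et sont directement parallèles au cas imprimitif.

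Ensuite, comme au paragraphe~1, j'utiliserais les corollaires 1 et 2 du chapitre 8 pour écrire $(s_1'' s_2'')^3 = z_3 b_3$, $(s_2'' s_3'')^2 = z_1 b_1$ et $(s_1'' s_3'')^3 = z_2 b_2$, où $z_k$ est l'élément de $\mathcal{Z}'$ centralisant les deux autres générateurs et $b_k$ une translation dépendant linéairement des $\lambda_i$. Les relations de la présentation de $A_4$ imposent l'égalité de ces trois produits; comme les $z_k$ diffèrent entre eux par des translations explicites (du même type que le $z_1 z_3 = (0,\frac{2}{m+2})$ du paragraphe~1), on en tire un système d'équations linéaires en $\lambda_1, \lambda_2, \lambda_3$ portant sur les parties de translation.

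Le cœur de la preuve sera de montrer que ce système est incompatible: en remplaçant les $c_i$ par leurs expressions en coordonnées (calculées ci-dessus pour $N(G)$), l'une des égalités force une coordonnée de translation à être à la fois nulle et égale à un multiple scalaire non nul de l'un des $c_i$ --- exactement le phénomène $\frac{2}{m+2} \neq 0$ rencontré au paragraphe~1. On en déduira qu'aucun choix des $\lambda_i$ ne convient, donc que $(\star\star)$ n'est pas scindée. L'obstacle principal sera le calcul effectif des parties de translation $b_k$ des produits $(s_i'' s_j'')^{m_{ij}}$, qui nécessite les données spectrales (valeurs et vecteurs propres) des éléments $s_i'' s_j''$; le choix de la présentation $A_4$, sans conjugaison, allège notablement ce calcul par rapport aux présentations $A_1$ ou $A_3$, toutes ces présentations conduisant au même groupe $G$.
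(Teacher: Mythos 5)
There is a genuine gap, and it sits exactly at the step you defer. First, note that the paper's proof is \emph{not} a replay of Proposition 6: it works with the form $A_1$ of $G$ (so $w(3,3,4)$, $\alpha=\beta=1$, $\gamma=2$, $l=\sqrt{-2}$, $m=-\sqrt{-2}$), invokes the single scalar necessary condition $(\mathcal{E})$ for a splitting,
\[
2\lambda_1+\sqrt{-2}\,(1-\sqrt{-2})\,\lambda_2-\sqrt{-2}\,(1+\sqrt{-2})\,\lambda_3=-1,
\]
and concludes arithmetically: by Theorem 2 the admissible $\lambda_i$ are constrained to lie in $\mathbb{Z}[\sqrt{-2}]$ (the translations $\lambda_i c_i$ must lie in the lattice $N$), so the left-hand side is divisible by $\sqrt{-2}$, which is not a unit of $\mathbb{Z}[\sqrt{-2}]$, while the right-hand side $-1$ is a unit. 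The obstruction to splitting is an \emph{integrality} obstruction, not a linear one.

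Your plan transposes instead the linear mechanism of Proposition 6: impose the relations of the $A_4$ presentation on lifts $s_i(\lambda_i c_i)$, extract a linear system in the $\lambda_i$, and hope that some coordinate is forced to be simultaneously zero and non-zero, ``exactement le phénomène $\frac{2}{m+2}\neq 0$''. Two problems. You never carry out this computation --- you yourself call it ``l'obstacle principal'' --- so the decisive step of the proof is simply absent. Worse, the step would fail if you did carry it out: since $G'\simeq G_{12}$ is finite and $N\otimes\mathbb{Q}$ is a $\mathbb{Q}[G']$-module, one has $H^2(G',N\otimes\mathbb{Q})=0$, so the extension splits after extension of scalars; concretely, there exist $\lambda_i\in K=\mathbb{Q}(\sqrt{-2})$ (the condition $\sigma(s_i')^2=1$ forcing translation parts of the form $\lambda_i c_i$, as you observe) satisfying \emph{all} the relations of any presentation of $G_{12}$. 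Hence the linear system you would obtain is compatible over $K$, and no field-level contradiction of the type seen in paragraph 1 can appear for $G_{12}$. The only possible obstruction --- and the one the paper exploits --- is that no solution can be chosen with every $\lambda_i c_i$ in the lattice $N$; any correct proof must therefore bring in the $\mathbb{Z}[\sqrt{-2}]$-module structure of $N$ (Theorem 2) together with a divisibility argument, which your sketch never invokes. This is precisely why the paper changes method between Proposition 6 and this theorem, rather than reusing the computation you propose to imitate.
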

\begin{proof}
Nous allons considérer le groupe $A_{1}$ ($\simeq G_{12}$). une présentation de $A_{1}$ est alors :$(w(3,3,4),(s_{1}(s_{2}s_{3})^{2})^{2})$ comme on le vérifie facilement. nous avons $\alpha=\beta=1$ , $\gamma=2$, $l=\sqrt{-2}$, $m=-\sqrt{-2}$ et la relation $(\mathcal{E})$ devient:
\[
2\lambda_{1}+\sqrt{-2}(1-\sqrt{-2})\lambda_{2}-\sqrt{-2}(1+\sqrt{-2})\lambda_{3}=-1.
\]
Celle-ci ne peut pas être satisfaite quels que soient les $\lambda_{i}$ car $\sqrt{-2}$ divise le membre de gauche de $(\mathcal{E})$ et n'est pas inversible: il ne peut pas diviser le membre de droite.
\end{proof}
\begin{remark}
Soit $W(p,q,r)$ n groupe de Coxeter de rang $3$ avec $p\geqslant3$, $q\geqslant3$ et $r\geqslant3$. On suppose que $R:W\to GL_{3}(M)$ est une représentation de réflexion affine et que $\{\alpha,\beta,\gamma\}\subset \mathbb{N}$. Alors on obtient pour $G'$ soit un groupe diédral, soit un groupe de réflexion complexe imprimitif, soit $G_{12}$.
\end{remark}
\section{Le groupe de réflexion complexe $G_{13}$ et groupes associés}
Dans tout cette section si $G$ est un groupe de réflexion, on suppose toujours que $\Delta(G)=0$.
\subsection{Généralités}
On sait que le groupe de réflexion complexe $G_{13}$ est isomorphe à une extension centrale non scindée de $W(B_{3})$ par un groupe cyclique d'ordre $2$. On voit aussi que $G_{13}$ est isomorphe au produit central $C_{4}\star \hat{S}_{4}$ avec élément d'ordre $2$ amalgamé et où $\hat{S}_{4}$ désigne une extension centrale non scindée du groupe symétrique $S_{4}$ dans laquelle les réflexions se remontent en éléments d'ordre $4$. On sait aussi que le corps de définition de $G_{13}$ comme groupe de réflexion complexe est $K=\mathbb{Q}(\zeta_{8})$ où $\zeta_{8}$ est une racine primitive $8$-ième de l'unité. Il en résulte que le corps $K_{0}=\mathbb{Q}(\sqrt{2})$ et que $K=\mathbb{Q}(\sqrt{2},i)$. On a alors $\{p,q,r\}\subset\{3,4,6,8\}$ ($2$ ne pouvant pas intervenir car $K$ n'est pas un sous-corps de $\mathbb{R}$) avec l'un de $p$, $q$ ou $r$ égal à $8$ d'après la remarque 2. Nous sommes ainsi amenés à examiner les triples suivants pour $\{p,q,r\}$:
\[
(3,3,8),(3,4,8),(3,6,8),(3,8,8),(4,4,8),(4,6,8),(4,8,8), 
\]
\[
(6,6,8),(6,8,8),(8,8,8).
\]
Nous pouvons déjà remarquer que le triple  $(4,4,8)$ n'est pas à considérer ici car il correspond à l'un des groupes étudiés au paragraphe 1; de même le triple $(4,8,8)$ qui correspond à un groupe diédral affine.
\subsection{Différents corps et groupes}
Nous cherchons d'abord tous les triples $(p,q,r)$ pour lesquels $K=\mathbb{Q}(\sqrt{2},i)$. Pour cela nous donnons les corps obtenus en prenant chacun des triples donnés plus haut. Nous rappelons que les racines de $v_{8}(X)=X^{2}-4X+2$ sont $2+\sqrt{2}$ et $2-\sqrt{2}$. Dans ce qui suit $\epsilon\in \{-1,+1\}$ ou $\epsilon\in\{+,-\}$ s'il apparait en exposant.
\begin{proposition}
On garde les hypothèses et notations précédentes.
\begin{enumerate}
  \item On obtient le corps $K_{\epsilon}=\mathbb{Q}(\sqrt{\epsilon\sqrt{2}})$ pour les triples suivants: (on appelle $A_{i}^{\epsilon}$ les groupes correspondants).
  \begin{itemize}
  \item $(3,3,8)$, $\mathcal{P}(A_{1}^{\epsilon})=\mathcal{P}(1,1,2-\epsilon\sqrt{2};\epsilon\sqrt{2}+\sqrt{\epsilon\sqrt{2}},\epsilon\sqrt{2}-\sqrt{\epsilon\sqrt{2}})$.
  \item $(3,6,8)$, $\mathcal{P}(A_{2}^{\epsilon})=\mathcal{P}(1,3,2+\epsilon\sqrt{2};-2-\epsilon\sqrt{2}+\sqrt{\epsilon\sqrt{2}},-2-\epsilon\sqrt{2}-\sqrt{\epsilon\sqrt{2}})$.
  \item $(6,6,8)$, $\mathcal{P}(A_{3}^{\epsilon})=\mathcal{P}(3,3,2-\epsilon\sqrt{2};-4+\epsilon\sqrt{2}+\sqrt{\epsilon\sqrt{2}},-4+\epsilon\sqrt{2}-\sqrt{\epsilon\sqrt{2}})$.
\end{itemize}
  \item Les groupes $A_{i}^{\epsilon}$ $((1\leqslant i \leqslant 3),\epsilon\in \{-1,+1\})$ sont isomorphes entre eux. De plus si $A$ est l'un de ces groupes, alors $A'$ est infini.
  \item L'extension $(\star\star)$ est non scindée.
\end{enumerate}
\end{proposition}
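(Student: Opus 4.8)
The plan is to establish the three assertions in turn, borrowing the machinery already set up for the imprimitive groups (Propositions 1--3) and for $G_{12}$ (Proposition 8 and the two theorems following it). Assertion (1) is pure computation and I would treat it exactly as Proposition 1: the roots of $v_{8}(X)=X^{2}-4X+2$ being $2\pm\sqrt2$, each triple fixes $\gamma$, and the condition $\Delta(G)=0$ then pins down $l+m$ and $lm$, so that $l,m$ are the roots of the displayed quadratic $Q(X)$. A direct check gives $l,m=\epsilon\sqrt2\pm\sqrt{\epsilon\sqrt2}$ (and the analogues for $A_{2}^{\epsilon},A_{3}^{\epsilon}$), whence the field generated over $\mathbb{Q}$ is $K_{\epsilon}=\mathbb{Q}(\sqrt{\epsilon\sqrt2})$. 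There are no difficulties here.

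For the isomorphisms in (2) I would reproduce the argument of Proposition 8. Fixing the representative $A_{1}^{\epsilon}$ coming from $W(3,3,8)$ with $\alpha=\beta=1$, I would compute the numbers $C(s_{1},s_{2}(s_{2}s_{3})^{k})$ and $C(s_{1},s_{3}(s_{2}s_{3})^{k})$ for the relevant $k$; each such value fixes the order of a new pair of reflections and exhibits $A_{1}^{\epsilon}$ as a quotient of another $W(p',q',8)$, matching $A_{2}^{\epsilon}$ or $A_{3}^{\epsilon}$. The one new feature compared with the $G_{12}$ case is that the substitution $s_{2}\mapsto s_{2}(s_{2}s_{3})^{k}$ replaces $\gamma$ by $4-\gamma$, exactly as in Proposition 2; since $4-(2-\epsilon\sqrt2)=2+\epsilon\sqrt2$, this flips $\epsilon$. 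Chaining the substitutions therefore links $A_{1}^{+},A_{2}^{+},A_{3}^{+}$ with $A_{1}^{-},A_{2}^{-},A_{3}^{-}$, so that all six groups are isomorphic to one group $A$, and it suffices to prove the remaining assertions for a single convenient representative.

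The statement that $A'$ is infinite is the crux. I would compute, as was done for $G_{12}$, the characteristic polynomial on $M'=M/\langle b\rangle$ of the image $t'$ of $t=s_{1}s_{2}s_{3}$; the analogue of $P_{t'}(X)=X^{2}+\sqrt{-2}X-1$ is here
\[
P_{t'}(X)=X^{2}+\sqrt{\epsilon\sqrt2}\,X-1 ,
\]
so that $\det t'=-1$ and $\operatorname{tr} t'=-\sqrt{\epsilon\sqrt2}$. By the isomorphism in (2) it is enough to treat $\epsilon=+1$, where $K_{+}=\mathbb{Q}(\sqrt[4]2)$ is real: then $P_{t'}(X)=X^{2}+\sqrt[4]2\,X-1$ has real discriminant $\sqrt2+4>0$ and product of roots $-1$, hence two real eigenvalues, one of absolute value $>1$. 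Such an eigenvalue is not a root of unity, so $t'$ has infinite order and $A'$ is infinite. This is precisely the point at which the ``wrong'' field $K_{\epsilon}\neq\mathbb{Q}(\zeta_{8})$ makes itself felt and distinguishes these triples from the genuine $G_{13}$; verifying the exact shape of $P_{t'}$ is the only delicate bookkeeping.

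Finally, for (3) I would again reduce to $A_{1}^{\epsilon}$ and run the section-obstruction computation of the non-splitting theorem for $G_{12}$. Writing $\sigma(s_{i})=s_{i}(\lambda_{i}c_{i})$ and imposing the defining relations produces a linear relation $(\mathcal{E})$ in the $\lambda_{i}$ whose right-hand side is the unit $-1$; with $\alpha=\beta=1$, $\gamma=2-\epsilon\sqrt2$ and $l,m=\epsilon\sqrt2\pm\sqrt{\epsilon\sqrt2}$, every coefficient on the left of $(\mathcal{E})$ is divisible by the non-unit $\sqrt{\epsilon\sqrt2}$ in the relevant ring of integers, while $-1$ is a unit. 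Hence $(\mathcal{E})$ has no solution, no section exists, and $(\star\star)$ is non-split. The only thing to check carefully is that $\sqrt{\epsilon\sqrt2}=\pm\sqrt[4]2$ is a non-invertible (indeed prime) element of $\mathbb{Z}[\sqrt[4]2]$ and of its $\epsilon=-1$ analogue, which is standard.
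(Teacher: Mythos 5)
Parts (1) and (3) of your proposal are sound and essentially coincide with the paper: for (3) the paper writes $(\mathcal{E})$ for $A_{1}^{+}$ as $-1=(2+\sqrt{2})\lambda_{1}+(2+\sqrt{2}+\sqrt[4]{2})\lambda_{2}+(2+\sqrt{2}-\sqrt[4]{2})\lambda_{3}$ and kills it by the same divisibility-by-$\sqrt[4]{2}$ argument you give (and your remark that the computation is uniform in $\epsilon$, since with $\pi=\sqrt{\epsilon\sqrt{2}}$ one has $4-\gamma=\pi^{2}(\pi^{2}+1)$ and $l+2,\,m+2=\pi\bigl(\pi(\pi^{2}+1)\pm 1\bigr)$, is a small improvement). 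Your infinite-order argument via the eigenvalues of $t'$ is also a legitimate variant of the paper's (which uses $C(s_{1},s_{2}^{s_{3}})=3+\sqrt{2}>4$), at least in the real case $\epsilon=+1$.

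The genuine gap is in part (2), precisely at the cross-$\epsilon$ isomorphism. Your mechanism — ``the substitution $s_{2}\mapsto s_{2}(s_{2}s_{3})^{k}$ replaces $\gamma$ by $4-\gamma$, hence flips $\epsilon$'' — cannot work, because the superscript $\epsilon$ does not index the root of $v_{8}$ chosen for $\gamma$; it indexes the field $K_{\epsilon}$, i.e.\ which square root $\sqrt{\epsilon\sqrt{2}}$ occurs in $l,m$. Indeed, within a fixed $\epsilon$ the proposition already exhibits both roots: $A_{1}^{\epsilon},A_{3}^{\epsilon}$ have $\gamma=2-\epsilon\sqrt{2}$ while $A_{2}^{\epsilon}$ has $\gamma=2+\epsilon\sqrt{2}$. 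So the substitution trick only links $A_{1}^{\epsilon}\simeq A_{2}^{\epsilon}\simeq A_{3}^{\epsilon}$ for each fixed $\epsilon$ (which is exactly what the paper does with it). It can never reach the other sign: every parameter system obtained by re-choosing reflection generators inside $A_{i}^{+}$ consists of traces of matrices over the real field $\mathbb{Q}(\sqrt[4]{2})$, whereas the parameters of every $A_{j}^{-}$ involve $\sqrt{-\sqrt{2}}=i\sqrt[4]{2}\notin\mathbb{R}$, so the parameter systems can never literally match. The paper bridges this with a different idea you are missing: $\mathbb{Q}(\sqrt[4]{2})$ and $\mathbb{Q}(i\sqrt[4]{2})$ are abstractly isomorphic (both are $\mathbb{Q}[X]/(X^{4}-2)$); one passes to the splitting field $\tilde{K}$ of $X^{4}-2$, chooses $\theta\in\mathcal{G}(\tilde{K}/\mathbb{Q})$ with $\theta(\sqrt[4]{2})=i\sqrt[4]{2}$, $\theta(i)=i$ (so $\theta(\sqrt{2})=-\sqrt{2}$ and $\theta$ carries the parameters of $A_{1}^{+}$ to those of $A_{1}^{-}$), and conjugates by the $\theta$-semi-linear bijection $\sigma$ of $\tilde{M}=M\otimes_{K}\tilde{K}$ matching the two adapted bases; then $\sigma A_{1}^{\mp}\sigma^{-1}=A_{1}^{\pm}$. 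Without this, your reduction ``it is enough to treat $\epsilon=+1$'' in the infinite-order argument is unsupported (and note that for $\epsilon=-1$ the naive eigenvalue argument is not immediate, since $X^{2}\pm i\sqrt[4]{2}X-1$ can a priori have both roots on the unit circle), so the gap propagates to that claim as well.
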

\begin{proof}
1) La démonstration ne présente pas de difficultés.

2) Partant de $A_{1}^{\epsilon}$, on a $C(s_{1},s_{2}(s_{2}s_{3})^{4})=4-\alpha=3=4-\beta=C(s_{1},s_{3}(s_{2}s_{3})^{4})$ donc, comme $<s_{2}(s_{2}s_{3})^{4},s_{3}(s_{2}s_{3})^{4}>=<s_{2},s_{3}>$, on voit que les groupes $A_{1}^{\epsilon}$ et $A_{3}^{\epsilon}$ sont isomorphes. Partant de $A_{1}^{\epsilon}$, on a $C(s_{2},s_{1}(s_{1}s_{3})^{3})=4-\alpha=3$ et  $C(s_{2},s_{3}(s_{1}s_{3})^{4})=4-\gamma=2-\epsilon\sqrt{2}$ donc, comme $<s_{1}(s_{1}s_{3})^{3},s_{3}(s_{1}s_{3})^{3}>=<s_{1},s_{3}>$, on voit que les groupes $A_{2}^{\epsilon}$ et $A_{3}^{\epsilon}$ sont isomorphes.\\
Nous montrons maintenant que les groupes $A_{1}^{+}$ et $A_{1}^{-}$ sont isomorphes.\\
Les quatre racines du polynôme $P(X)=X^{4}-2$ sont $\sqrt[4]{2}$, $i\sqrt[4]{2}$, $-\sqrt[4]{2}$, $-i\sqrt[4]{2}$. Soit $\tilde{K}$ un corps de décomposition de $P(X)$. Le groupe de Galois  $\mathcal{G}(\tilde{K}/\mathbb{Q})$ est isomorphe au groupe diédral d'ordre $8$. Considérons $\tilde{M}:=M\otimes_{K}\tilde{K}$. Soit $\mathcal{A}:=(a_{1},a_{2},a_{3})$ une base du $\tilde{K}$-espace vectoriel $\tilde{M}$ sur lequel les éléments $s_{1}$, $s_{2}$, $s_{3}$ de $A_{1}^{-}$ opèrent:
\[
\begin{array}{c|ccc}
& s_{1}.a_{1} & = & -a_{1}\\
& s_{1}.a_{2} & = & a_{2}+a_{1}\\
& s_{1}.a_{3} & = & a_{3}+a_{1}
\end{array}
,
\begin{array}{c|ccc}
& s_{2}.a_{1} & = & a_{1}+ a_{2}\\
& s_{2}.a_{2} & = & -a_{2}\\
& s_{2}.a_{3} & = & a_{3}+l_{1}a_{2}
\end{array}
,
\begin{array}{c|ccc}
& s_{3}.a_{1} & = & a_{1}+a_{3}\\
& s_{3}.a_{2} & = & a_{2}+m_{1}a_{3}\\
& s_{3}.a_{3} & = & -a_{3}
\end{array}
\]
avec $l_{1}=\sqrt{2}+\sqrt[4]{2}$ et $m_{1}=\sqrt{2}-\sqrt[4]{2}$.\\
Soit $\mathcal{B}:=(b_{1},b_{2},b_{3})$ une base du $\tilde{K}$-espace vectoriel $\tilde{M}$ sur lequel les éléments $t_{1}$, $t_{2}$, $t_{3}$ opèrent:
\[
\begin{array}{c|ccc}
& t_{1}.b_{1} & = & -b_{1}\\
& t_{1}.b_{2} & = & b_{2}+b_{1}\\
& t_{1}.b_{3} & = & b_{3}+b_{1}
\end{array}
,
\begin{array}{c|ccc}
& t_{2}.b_{1} & = & b_{1}+b_{2}\\
& t_{2}.b_{2} & = & -b_{2}\\
& t_{2}.b_{3} & = & b_{3}+l_{2}b_{2}
\end{array}
,
\begin{array}{c|ccc}
& t_{3}.b_{1} & = & b_{1}+b_{3}\\
& t_{3}.b_{2} & = & b_{2}+m_{2}b_{3}\\
& t_{3}.b_{3} & = & -b_{3}
\end{array}
\]
avec $l_{2}=\sqrt{2}+i\sqrt[4]{2}$ et $m_{2}=\sqrt{2}-i\sqrt[4]{2}$.

Soit $\theta\in\mathcal{G}(\tilde{K}/\mathbb{Q})$ défini par $\theta(\sqrt[4]{2})=i\sqrt[4]{2}$ et $\theta(i)=i$. On a $\theta(\sqrt{2})=-\sqrt{2}$, $\theta(l_{1})=l_{2}$ et $\theta(m_{1})=m_{2}$. Soit $\sigma:\tilde{M}\to\tilde{M}$ l'application semi-linéaire définie par $\sigma(a_{i})=b_{i}$ $(1\leqslant i \leqslant 3)$. Si $\lambda\in \tilde{K}$ et $\nu\in\tilde{M}$ on a $:\sigma(\lambda\nu)=\theta(\lambda)\sigma(\nu)$. Il est alors facile de voir que pour $1\leqslant i \leqslant 3$, on a $\sigma s_{i}\sigma^{-1}=t_{i}$ donc $\sigma A_{1}^{-}\sigma^{-1}=A_{1}^{+}$: les groupes $A_{1}^{-}$ et  $A_{1}^{+}$ sont isomorphes.

3) Considérons $A_{1}^{-}$. On a $C(s_{1},s_{2}^{s_{3}})=3+\sqrt{2}$, donc $s_{1}s_{2}^{s_{3}}$ est d'ordre infini et n'appartient pas à $N(A_{1}^{-})$ donc son image est aussi d'ordre infini dans $(A_{1}^{-})'$.

4) Dans $A_{1}^{+}$ la relation $(\mathcal{E})$ devient: $-1=(2-\sqrt{2})\lambda_{1}+(2+\sqrt{2}+\sqrt[4]{2})\lambda_{2}+(2+\sqrt{2}-\sqrt[4]{2})\lambda_{3}$. Cette relation ne peut pas être satisfaite car $\sqrt[4]{2}$ divise le membre de droite de $(\mathcal{E})$ et n'est pas inversible.
\end{proof}
\begin{proposition}
On garde les hypothèses et notations précédentes. On obtient le corps $K=\mathbb{Q}(\sqrt{1-\epsilon\sqrt{2}})$ pour les triples suivants: (on appelle $B_{i}^{\epsilon}$ les groupes correspondants):
\begin{itemize}
  \item $(8,8,8)$, $\mathcal{P}(B_{1}^{\epsilon})=\mathcal{P}(2+\epsilon\sqrt{2},2+\epsilon\sqrt{2},2+\epsilon\sqrt{2};-2-3\epsilon\sqrt{2}+\sqrt{2}\sqrt{1-\epsilon\sqrt{2}},-2-3\epsilon\sqrt{2}-\sqrt{2}\sqrt{1-\epsilon\sqrt{2}})$
  \item $(8,8,8)$, $\mathcal{P}(B_{1}^{\epsilon})=\mathcal{P}(2-\epsilon\sqrt{2},2-\epsilon\sqrt{2},2+\epsilon\sqrt{2};-2+\epsilon\sqrt{2}+\sqrt{2}\sqrt{1-\epsilon\sqrt{2}},-2+\epsilon\sqrt{2}-\sqrt{2}\sqrt{1-\epsilon\sqrt{2}})$
\end{itemize}
Les groupes obtenus sont tous isomorphes entre eux. Si $B$ est l'un d'eux, $B'$ est infini. La suite $(\star\star)$ est non scindée.
\end{proposition}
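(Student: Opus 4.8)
The plan is to imitate, term for term, the proof of the preceding proposition on the groups $A_i^\epsilon$, the two systems listed being both of type $(8,8,8)$ and differing only in how the two values $2+\epsilon\sqrt2=4\cos^2\frac{\pi}{8}$ and $2-\epsilon\sqrt2=4\cos^2\frac{3\pi}{8}$ of $4\cos^2$ are distributed over the three edges. Deriving the parameter systems from $v_8(X)=X^2-4X+2$ together with $\Delta(G)=0$ presents no difficulty: $l,m$ are the roots of a quadratic over $\mathbb{Q}(\sqrt2)$ of discriminant $8(1-\epsilon\sqrt2)$, whence the square root $\sqrt2\sqrt{1-\epsilon\sqrt2}$ and $K=\mathbb{Q}(\sqrt{1-\epsilon\sqrt2})$. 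Setting $\delta:=\sqrt{1-\epsilon\sqrt2}$, note that $\sqrt2=\epsilon(1-\delta^2)\in K$, and that $K\neq\mathbb{Q}(\zeta_8)$ since $\sqrt2-1$ is not a square in $\mathbb{Q}(\sqrt2)$. Three things then remain: the isomorphisms, the infiniteness of $B'$, and the non-splitting of $(\star\star)$.

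For the isomorphisms I would first fix $\epsilon$ and pass from the ``all-equal'' system $\alpha=\beta=\gamma=2+\epsilon\sqrt2$ to the mixed one by the construction fondamentale: replacing $s_2,s_3$ by $s_2(s_2s_3)^4,\,s_3(s_2s_3)^4$ (translation by the central involution $(s_2s_3)^4$ of $\langle s_2,s_3\rangle\simeq D_8$) gives $C(s_1,s_2(s_2s_3)^4)=4-\alpha=2-\epsilon\sqrt2=4-\beta=C(s_1,s_3(s_2s_3)^4)$, leaves the $\gamma$-edge fixed, and satisfies $\langle s_2(s_2s_3)^4,s_3(s_2s_3)^4\rangle=\langle s_2,s_3\rangle$; this turns the all-equal system into $(2-\epsilon\sqrt2,2-\epsilon\sqrt2,2+\epsilon\sqrt2)$ without changing the group, and a short computation matches the transformed $l,m$ with those stated for the mixed system.

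The hard step, exactly as for $A_1^+\simeq A_1^-$, is the isomorphism $B^+\simeq B^-$ between the two values of $\epsilon$. Since $1\pm\sqrt2$ are the roots of $X^2-2X-1$, the four numbers $\pm\sqrt{1\pm\sqrt2}$ are the roots of $X^4-2X^2-1$, whose splitting field $\tilde K=\mathbb{Q}(\sqrt{1+\sqrt2},i)$ has Galois group isomorphic to the dihedral group of order $8$ (one uses $\sqrt{1+\sqrt2}\,\sqrt{1-\sqrt2}=i$). Choosing $\theta\in\mathcal{G}(\tilde K/\mathbb{Q})$ with $\theta(\sqrt2)=-\sqrt2$, so that $\theta$ exchanges $1-\sqrt2$ and $1+\sqrt2$, I extend scalars to $\tilde M:=M\otimes_K\tilde K$ and build a $\theta$-semilinear map $\sigma$ carrying the basis on which $B^-$ acts to the one on which $B^+$ acts; the task is to verify $\sigma s_i\sigma^{-1}=t_i$ for $i=1,2,3$, which comes down to tracking $\theta$ through the off-diagonal entries $\sqrt2\sqrt{1\pm\sqrt2}$ of $l,m$ with the correct signs. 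This sign bookkeeping is the only delicate point; once it is settled one gets $\sigma B^-\sigma^{-1}=B^+$, and together with the previous paragraph all four groups are isomorphic to a single group $B$.

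The last two assertions are short. Using the isomorphisms I reduce to $\epsilon=+1$ and the all-equal system, where the formula $C(s_1,s_2^{s_3})=\alpha+\beta\gamma+\alpha l+\beta m$ gives, with $l+m=-4-6\sqrt2$, the value $(2+\sqrt2)(-1-5\sqrt2)=-12-11\sqrt2<0$; since $4\cos^2\geqslant0$, the element $s_1s_2^{s_3}$ is a loxodromic product of two reflections, of infinite order with no power a translation, so its image in $B'$ is of infinite order and $B'$ is infinite. For the non-splitting I write out $(\mathcal{E})$ exactly as for $A_1^+$: its coefficients in the $\lambda_i$ are $\gamma=\sqrt2(\sqrt2+\epsilon)$, $2+l=\sqrt2(-3\epsilon+\delta)$ and $2+m=\sqrt2(-3\epsilon-\delta)$, all divisible by the non-unit $\sqrt2$ (the ramified prime above $2$), whereas the relation demands that they combine to $-1$. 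As $\sqrt2$ cannot divide the unit $-1$, no scalars $\lambda_i$ exist and the sequence $(\star\star)$ is non-split.
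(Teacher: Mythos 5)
Your derivation of the parameter systems and your isomorphism argument do coincide with the paper's: the paper's proof says only that it is \emph{semblable à celle de la proposition 10} and identifies $\tilde K$ as a splitting field of $X^{4}-2X^{2}-1$, which is exactly the $\theta$-semilinear conjugation you describe, supplemented by the passage from the all-equal to the mixed system via $s_{2}(s_{2}s_{3})^{4}$, $s_{3}(s_{2}s_{3})^{4}$. The trouble starts with the last two assertions, and both failures trace back to a single misreading: in this paper the last two entries of $\mathcal{P}(\alpha,\beta,\gamma;\cdot,\cdot)$ are $\alpha l$ and $\beta m$, not $l$ and $m$ (see Proposition 1, where the system is written $\mathcal{P}(2,2,\gamma;2l,2m)$, or the proof of Proposition 9, where the entries $-4\pm\sqrt{-2}$ of $A_{4}$ are translated into $3l+3m=-8$, or Proposition 13, where $\alpha l=-1+i$, $\beta m=-1-i$ become the entries of $\mathcal{P}(D_{1}^{\epsilon})$). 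Hence for the all-equal system with $\epsilon=+1$ one has $\alpha l+\beta m=-4-6\sqrt{2}$, and $C(s_{1},s_{2}^{s_{3}})=\alpha+\beta\gamma+\alpha l+\beta m=(2+\sqrt{2})+(6+4\sqrt{2})-(4+6\sqrt{2})=4-\sqrt{2}$, not $-12-11\sqrt{2}$. Since $4-\sqrt{2}\in(0,4)$, your justification (``negative, hence loxodromic'') evaporates; the conclusion that $B'$ is infinite is still true, but it has to be argued the way the paper does in Proposition 10 for the value $3+\sqrt{2}$: a product of two reflections of finite order has $C$-value $4\cos^{2}(k\pi/n)$, all of whose Galois conjugates lie in $[0,4]$, whereas $4-\sqrt{2}$ has the conjugate $4+\sqrt{2}>4$.

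The non-splitting step is the more serious gap, because your argument is not the paper's and, as written, is false. With the correct normalization one finds $l+2=(\sqrt{2}-\epsilon)(\sqrt{2}-\epsilon+\delta)$ where $\delta=\sqrt{1-\epsilon\sqrt{2}}$ and $\sqrt{2}+\epsilon$ is a unit; since $N_{K/\mathbb{Q}}(\delta-\epsilon)=-2$ while $N_{K/\mathbb{Q}}(\sqrt{2})=4$, the element $\sqrt{2}$ does \emph{not} divide $l+2$ (nor $m+2$); only $4-\gamma$ is a multiple of $\sqrt{2}$. So the coefficient-divisibility trick, which does work in Theorem 3 and in Proposition 10 where $\alpha=\beta=1$ makes the $\mathcal{P}$-entries equal to $l,m$, simply does not apply here. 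Moreover, even where the coefficients are divisible, that style of argument silently assumes the $\lambda_{i}$ are algebraic integers, whereas they are only constrained by $\lambda_{i}c_{i}\in N$; excluding fractional values is part of what must be proved. This is precisely why the paper's own proof of this proposition proceeds differently: working with $\alpha=\beta=2+\sqrt{2}$, $\gamma=2-\sqrt{2}$, it produces explicit elements of the groups $[N,s_{i}]$ (namely $\sqrt{2}\,c_{1}$, $2\sqrt{1+\sqrt{2}}\,c_{1}$, $\alpha l\,c_{2}$, $\beta m\,c_{3}$, $2l\,c_{1}$, $2m\,c_{1}$, etc.) and deduces that \emph{every} $\lambda$ with $\lambda c_{i}\in[N,s_{i}]$ is divisible by $\sqrt{2}$; the contradiction with the unit $-1$ in the relation $(\mathcal{E})$ is then drawn from the divisibility of the $\lambda$'s, not of the coefficients. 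Your proof of the last assertion needs to be rebuilt along those lines.
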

\begin{proof}
Elle est semblable à celle de la proposition 10. le corps $\tilde{K}$ est un corps de décomposition du polynôme $X^{4}-2X^{2}-1$. Nous montrons maintenant que la suite $(\star\star)$ est non scindée. Nous nous pla\c cons dans le cas $K=\mathcal{Q}(\sqrt{2+2\sqrt{2}})$, $\alpha=\beta=2+\sqrt{2}$, $\gamma=2-\sqrt{2}$ et $\theta=\sqrt{2+2\sqrt{2}}$.\\
Si $\zeta\in  N$ alors $\gamma\zeta \in N$ et, comme $2\zeta\in N$, nous voyons que $\sqrt{2}\zeta \in N$; de plus $\theta\zeta\in N$. On a déjà vu que $(s_{1}(s_{2}s_{3})^{4})^{2}=(\frac{-2}{4-\gamma})c_{1}\in [N,s_{1}]$ et comme $2=\gamma(4-\gamma)$, nous voyons que $\gamma c_{1}\in [N,s_{1}]$ et donc $\gamma(4-\gamma)c_{1}=2c_{1}\in [N,s_{1}]$. Il en résulte que $\sqrt{2}c_{1}\in[N,s_{1}]$ et $\sqrt{2}\theta c_{1}=2\sqrt{1+\sqrt{2}}c_{1}\in[N,s_{1}]$.\\
Nous avons $y_{4}^{2}=(s_{3}(s_{1}s_{2})^{4})^{2}=(\frac{-2}{m+2})c_{3}\in [N,s_{3}]$ et comme $(\frac{-2}{m+2})=\frac{\gamma(4-\gamma)}{l}=\beta m$ nous obtenons $y_{4}^{2}=\beta mc_{3}\in[N,s_{3}]$, d'où aussi $2mc_{3}\in[N,s_{3}]$. \\
De même $x_{4}^{2}=(s_{2}(s_{1}s_{3})^{4})^{2}=(\frac{-2}{l+2})c_{2}=\alpha lc_{2}\in[N,s_{2}]$ d'où aussi $2lc_{2}\,[N,s_{2}]$. On obtient alors $2lc_{1}\in [N,s_{1}]$ et $2mc_{1}\in [N,s_{1}]$. Dans tous les cas, nous voyons que si $\lambda c_{i}\in [N,s_{1}]$, alors $\sqrt{2}$ divise $\lambda$. Comme dans la proposition précédente ceci est impossible.
\end{proof}
On peut remarquer que les corps obtenus dans les  cas précédents contiennent le sous-corps $\mathbb{Q}(\sqrt{2},i)$.
\subsection{Le corps $\mathbb{Q}(\sqrt{2},\sqrt{3})$}
Les triples $(8,8,3)$ et $(8,8,6)$.
\subsubsection{Quelques généralités. Résultats indépendants du corps $K$}
\begin{remark}
Contrairement à la convention précédente concernant $r$ nous supposerons que $\alpha$ et $\beta$ sont des racines de $v_{8}(X)$ et $r\in \{3,6\}$.\\ 
On utilise les notations et résultats de la notation 2 2) de \cite{Z5}. Soient $z_{2}=(s_{1}s_{3})^{4}$, $s_{12}=s_{1}z_{2}$ et $s_{32}=s_{3}z_{2}$. Comme $<s_{2},s_{3}>=<s_{12},s_{32}>=<s_{1},s_{32}>=<s_{12},s_{3}>$, on obtient $G=(G_{2}')=<s_{12},s_{2},s_{32}>=(\Gamma_{2})=<s_{1},s_{2},s_{32}>=(\Gamma_{2}')=<s_{12},s_{2},s_{3}>$ avec les systèmes de paramètres $\mathcal{P}(G_{2}')=\mathcal{P}(\alpha',\beta,\gamma';-\beta(m+2),-(\alpha l+2\beta))$, $\mathcal{P}(\Gamma_{2})=\mathcal{P}(\alpha,\beta',\gamma';-\alpha(l+2),-(2\alpha+\beta m))$, $\mathcal{P}(\Gamma_{2}')=\mathcal{P}(\alpha',\beta',\gamma ;-l(\alpha+2m),-m(\beta+2l))$. On rappelle que $\alpha'=4-\alpha$, $\beta'=4-\beta$ et $\gamma'=4-\gamma$ et aussi que $\alpha$ et $\alpha'$ (resp. $\beta$ et $\beta'$) sont les racines de $v_{8}(X)=X^{2}-4X+2$. De plus si $\gamma=1$ (racine de $v_{3}(X)=X-1$) alors $\gamma'=3$ (racine de $v_{6}(X)=X-3$). La forme $\Gamma_{2}'$ montre que l'on peut supposer que $\alpha=2+\sqrt{2}$.\\
\begin{itemize}
  \item Si $r=3$ alors $\gamma=1$ et pour $G_{2}'$ on a $\gamma'=3$.
  \item Si $r=6$ alors $\gamma=3$ et pour $G_{2}'$ on a $\gamma'=1$.
\end{itemize}
\end{remark}

Ici un nouveau phénomène se produit. En effet si $R:W(8,8,3)\to GL(M)$ est une représentation de réflexion affine et si $\alpha$ et $\beta$ sont des racines de $v_{8}(X)$, on obtient des corps différents pour $K$ suivant que $\alpha=\beta$ ou non.
\begin{proposition}
Soient le groupe $W(8,8,3)$ et $R(\alpha,\beta, \gamma ;l)$ une représentation de réflexion de ce groupe.
\begin{enumerate}
  \item On a les équivalences:
  \begin{itemize}
  \item (A) \quad $\Delta=0$;
  \item (B) \quad $\alpha l+\beta m=6-2(\alpha+\beta)$;
  \item (C) \quad $s_{2}s_{3}^{s_{1}s_{3}s_{1}}$ est d'ordre $6$;
  \item (D) \quad $s_{3}s_{2}^{s_{1}s_{2}s_{1}}$ est d'ordre $6$.
\end{itemize}
  \item Si ces conditions sont satisfaites, on a:
  \begin{itemize}
  \item (i) \quad $C(s_{2},s_{3}^{s_{1}s_{3}s_{1}})=4-\alpha$ et $s_{2}s_{1}^{s_{3}s_{3}s_{3}}$ est d'ordre $8$;
  \item (ii) \quad $C(s_{3},s_{1}^{s_{2}s_{1}s_{2}})=4-\beta$ et $s_{3}s_{1}^{s_{2}s_{1}s_{2}}$ est d'ordre $8$.
\end{itemize}
De plus $\alpha l$ et $\beta m$ sont les racines du polynôme:
\[
Q(X)=(X-3+\alpha+\beta)^{2}-\alpha\beta+2(\alpha+\beta)-5.
\]
\begin{itemize}
  \item Si $\alpha\neq\beta$, $\alpha\beta=2$ et $\alpha+\beta=4$: $\alpha l+\beta m=-2$, $Q(X)=(X+1)^{2}+1$, $\alpha l=-1+i$, $\beta m=-1-i$ et $K=\mathbb{Q}(\sqrt{2},i)$.
  \item Si $\alpha=\beta$, $-\alpha\beta+2(\alpha+\beta)-5=-3$: $\alpha l=\beta m=6-4\alpha$, $Q(X)=(X-3+2\alpha)^{2}-3$, $\alpha l=3-2\alpha+\sqrt{3}$, $\beta m=3-2\alpha-\sqrt{3}$ et $K=\mathbb{Q}(\sqrt{2},\sqrt{3})$.
 \end{itemize}
\end{enumerate}
\end{proposition}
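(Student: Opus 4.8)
Le plan est de tout ramener, comme dans les paragraphes précédents, à la valeur de $\Delta$ et à des calculs d'accouplements $C(s_i,\rho)$ pour des réflexions $\rho$ bien choisies. Je commencerais par rappeler que, pour une représentation de réflexion affine de $W(p,q,r)$ de paramètres $(\alpha,\beta,\gamma)$, on a
\[
\Delta=8-2(\alpha+\beta+\gamma)-(\alpha l+\beta m).
\]
Comme ici $\gamma=1$, il vient $\Delta=6-2(\alpha+\beta)-(\alpha l+\beta m)$, et l'équivalence (A)$\Leftrightarrow$(B) est alors immédiate, puisque $\Delta=0$ se réécrit exactement $\alpha l+\beta m=6-2(\alpha+\beta)$.

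Pour (B)$\Leftrightarrow$(C), je calculerais explicitement la réflexion $\rho=s_3^{s_1s_3s_1}$. Le sous-groupe $\langle s_1,s_3\rangle$ étant diédral d'ordre $16$ (car $\beta$ est racine de $v_8$, donc $s_1s_3$ est d'ordre $8$), on vérifie que $\rho=(s_1s_3)^3s_1$, et l'on obtient sa racine en faisant agir $s_1s_3s_1$ sur $a_3$ au moyen des matrices de la représentation. On exprime ensuite $C(s_2,\rho)$ comme polynôme en $\alpha,\beta,\gamma,l,m$; après simplification, et comme $s_2\rho$ est d'ordre $6$ si et seulement si $C(s_2,\rho)=4\cos^2\frac{\pi}{6}=3$, cette condition se ramène à la seule relation $\alpha l+\beta m=6-2(\alpha+\beta)$, c'est-à-dire (B). L'équivalence (B)$\Leftrightarrow$(D) s'en déduit par la symétrie $s_2\leftrightarrow s_3$, $\alpha\leftrightarrow\beta$, $l\leftrightarrow m$, qui fixe $\gamma$ et laisse $\Delta$ — donc (B) — invariant.

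Pour la partie 2, je me placerais sous $\Delta=0$. Les égalités (i) et (ii) relèvent du même calcul d'accouplement: on évalue $C(s_2,s_1^{s_3s_1s_3})$ et $C(s_3,s_1^{s_2s_1s_2})$, qui valent respectivement $4-\alpha$ et $4-\beta$; comme $4-\alpha$ et $4-\beta$ sont précisément les racines de $v_8$ (donc de la forme $4\cos^2\frac{k\pi}{8}$), les éléments correspondants sont d'ordre $8$. Pour le polynôme $Q$, j'utiliserais que (B) fournit la somme $\alpha l+\beta m=6-2(\alpha+\beta)$, tandis que le produit se calcule directement: $\alpha l\cdot\beta m=\alpha\beta\,lm=\alpha\beta\gamma=\alpha\beta$, puisque $lm=\gamma$ et $\gamma=1$. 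Ainsi $\alpha l$ et $\beta m$ sont les racines de $X^2-(6-2(\alpha+\beta))X+\alpha\beta$; en complétant le carré, puis en remplaçant $\alpha^2$ et $\beta^2$ par $4\alpha-2$ et $4\beta-2$ (car $v_8(\alpha)=v_8(\beta)=0$) dans le terme constant, on obtient la forme annoncée $Q(X)=(X-3+\alpha+\beta)^2-\alpha\beta+2(\alpha+\beta)-5$.

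Il resterait à séparer les deux cas. Si $\alpha\neq\beta$, alors $\alpha+\beta=4$ et $\alpha\beta=2$, d'où $-\alpha\beta+2(\alpha+\beta)-5=1$ et $Q(X)=(X+1)^2+1$, de racines $-1\pm i$, ce qui donne $K=\mathbb{Q}(\sqrt2,i)$. Si $\alpha=\beta$, alors $-\alpha\beta+2(\alpha+\beta)-5=-\alpha^2+4\alpha-5=-3$ et $Q(X)=(X-3+2\alpha)^2-3$, de racines $3-2\alpha\pm\sqrt3$, ce qui donne $K=\mathbb{Q}(\sqrt2,\sqrt3)$. La principale difficulté sera le calcul explicite des accouplements $C(s_2,s_3^{s_1s_3s_1})$ et analogues: il faut suivre avec soin l'action des matrices sur la racine conjuguée et contrôler que les termes parasites se simplifient pour ne laisser subsister qu'une dépendance en $\alpha l+\beta m$, ce qui est précisément ce qui rend les quatre conditions équivalentes.
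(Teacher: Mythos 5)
Your proposal is correct and follows essentially the same route as the paper: (A)$\Leftrightarrow$(B) from the formula for $\Delta$ with $\gamma=1$, (B)$\Leftrightarrow$(C)$\Leftrightarrow$(D) by computing pairings and using that order $6$ means $C=3$, then (i)--(ii) by showing the relevant pairings equal the $v_{8}$-roots $4-\alpha$ and $4-\beta$, and finally the sum/product computation for $Q$ with the same case split on $\alpha=\beta$. The only difference is one of completeness, not of method: the paper actually carries out the key computation, e.g. $C(s_{2},s_{3}^{s_{1}s_{3}s_{1}})=2\alpha\beta+(\beta-1)^{2}+\beta(\alpha l+\beta m)$, simplified via $(\beta-1)^{2}=2\beta-1$ and $\beta(4-\beta)=2$, whereas you defer these verifications.
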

\begin{proof}
On a $\Delta=8-2(\alpha+\beta)-2-(\alpha l+\beta m)$ donc (A) et (B) sont équivalentes.\\
On a $C(s_{2},s_{3}^{s_{1}s_{3}s_{1}})=2\alpha\beta+(\beta-1)^{2}+\beta(\alpha l+\beta m)$, mais $(\beta-1)^{2}=2\beta-1$ donc $C(s_{2},s_{3}^{s_{1}s_{3}s_{1}})=2\alpha\beta+2\beta-1+\beta(\alpha l+\beta m)$.
 Si $\alpha l+\beta m=6-2(\alpha+\beta)$ alors $C(s_{2},s_{3}^{s_{1}s_{3}s_{1}})=2\alpha\beta+2\beta-1+6\beta-2\alpha\beta-2\beta^{2}=-2\beta^{2}+8\beta-1=3$ et $s_{2}s_{3}^{s_{1}s_{3}s_{1}}$ est d'ordre $6$. On voit de même que $s_{3}s_{2}^{s_{1}s_{2}s_{1}}$ est d'ordre $6$. Si $s_{2}s_{3}^{s_{1}s_{3}s_{1}}$ est d'ordre $6$ alors $C(s_{2},s_{3}^{s_{1}s_{3}s_{1}})=3$ donc $2\alpha\beta+2\beta-1+\beta(\alpha l+\beta m)=3$ et $2\alpha\beta+2\beta-+\beta(\alpha l+\beta m)=4=2\beta(4-\beta)$ car les deux racines de $v_{8}(X)$ sont $\beta$ et $4-\beta$. Nous obtenons $2\alpha+2+(\alpha l+\beta m)=2(4-\beta)$, ou encore $\alpha l+ \beta m=6-2(\alpha+\beta)$: les conditions (B) et (C) sont donc équivalentes. De même les conditions (B) et (D) sont équivalentes. Si ces conditions sont satisfaites alors $C(s_{2},s_{3}^{s_{3}s_{1}s_{3}})= 2\beta+\alpha(\beta-1)^{2}+\beta(\alpha l+\beta m)=-2\beta^{2}+8\beta-\alpha=4-\alpha$ et $s_{2}s_{3}^{s_{3}s_{1}s_{3}}$ est d'ordre $8$. Par symétrie nous avons le (ii).\\
 Nous avons $\alpha l\beta m=\alpha\beta$, donc $\alpha l$ et $\beta m$ sont les racines du polynôme 
 \[
 Q(X)=X^{2}-2(3-(\alpha+\beta))X+\alpha\beta=(X-3+\alpha+\beta)^{2}-\alpha\beta+2(\alpha+\beta)-5.
 \]
 \begin{itemize}
  \item Si $\alpha\neq\beta$, alors $\alpha+\beta=4$, $\alpha\beta=2$, $\alpha l+\beta m=-2$ et $Q(X)=(X+1)^{2}+1$ d'où $K$ et $\alpha l$ et $\beta m$.
  \item Si $\alpha=\beta$, alors $-\alpha\beta+2(\alpha+\beta)-5=-3$, $\alpha l+\beta m=6-4\alpha$, $Q(X)=(X-3+2\alpha)^{2}-3$, d'où $K$, $\alpha l$ et $\beta m$.
\end{itemize}
 \end{proof}
\begin{proposition}
On obtient le corps $K=\mathbb{Q}(\sqrt{2},\sqrt{3})$ pour les triples suivants avec une condition supplémentaire (on appelle $C_{i}^{\epsilon}$ les groupes correspondants):
\begin{itemize}
  \item $(8,8,3)$, $\mathcal{P}(C_{1}^{\epsilon})=\mathcal{P}(2+\epsilon\sqrt{2},2+\epsilon\sqrt{2},1;-1-2\epsilon\sqrt{2}+\sqrt{3},-1-2\epsilon\sqrt{2}-\sqrt{3}).$
  \item $(8,8,6)$, $\mathcal{P}(C_{2}^{\epsilon})=\mathcal{P}(2+\epsilon\sqrt{2},2-\epsilon\sqrt{2},3;-3+\sqrt{3},-3-\sqrt{3}).$
\end{itemize}
Les groupes obtenus sont isomorphes entre eux. Si $C$ est l'un de ces groupes alors $C'$ est infini.
\end{proposition}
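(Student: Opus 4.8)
The plan is to prove the three assertions in turn—the field is $\mathbb{Q}(\sqrt{2},\sqrt{3})$, the groups are isomorphic, and $C'$ is infinite—reusing for the isomorphisms the change of generators $G_2'$ of the remark and a Galois twist like the one already employed. For the field: for $(8,8,3)$ the group $C_1^\epsilon$ is exactly the case $\alpha=\beta=2+\epsilon\sqrt{2}$ of the preceding proposition, and substituting into $\alpha l=3-2\alpha+\sqrt{3}$, $\beta m=3-2\alpha-\sqrt{3}$ gives $\alpha l=-1-2\epsilon\sqrt{2}+\sqrt{3}$, $\beta m=-1-2\epsilon\sqrt{2}-\sqrt{3}$, hence $\mathcal{P}(C_1^\epsilon)$ and $K=\mathbb{Q}(\sqrt{2},\sqrt{3})$; the supplementary condition is $\alpha=\beta$, which is what separates this field from $\mathbb{Q}(\sqrt{2},i)$ in that proposition. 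For $(8,8,6)$ I would run the analogous computation: with $\gamma=3$ one has $\Delta=2-2(\alpha+\beta)-(\alpha l+\beta m)$, so $\Delta=0$ together with the supplementary condition $\alpha\neq\beta$ (whence $\alpha+\beta=4$, $\alpha\beta=2$) forces $\alpha l+\beta m=-6$; combined with $\alpha l\cdot\beta m=\alpha\beta\gamma=6$ (the relation $lm=\gamma$), the numbers $\alpha l,\beta m$ are the roots of $X^2+6X+6=(X+3)^2-3$, namely $-3\pm\sqrt{3}$, so again $K=\mathbb{Q}(\sqrt{2},\sqrt{3})$.

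For the isomorphisms I would first connect the two triples through $G_2'$. Applied to $C_1^\epsilon$, it sends $(\alpha,\beta,\gamma)$ to $(2-\epsilon\sqrt{2},\,2+\epsilon\sqrt{2},\,3)$ and the pair $(\alpha l,\beta m)$ to $(-\beta(m+2),\,-(\alpha l+2\beta))$, which a short computation evaluates to $(-3+\sqrt{3},\,-3-\sqrt{3})$; thus $G_2'$ realises $C_1^\epsilon$ as a quotient of $W(8,8,6)$ with the system $\mathcal{P}(C_2^\epsilon)$, up to the relabelling $s_1\leftrightarrow s_2$ and the choice of root for $\alpha l$ (passage to the dual representation). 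Since $G_2'$ is only a change of generators in the same group, this gives $C_1^\epsilon\simeq C_2^\epsilon$. To pass between the signs of $\epsilon$ I would use the automorphism $\tau$ of $K$ that fixes $\sqrt{3}$ and sends $\sqrt{2}\mapsto-\sqrt{2}$: it carries the parameters of $C_1^+$ onto those of $C_1^-$, so the semi-linear map $\sigma$ with $\sigma(\lambda v)=\tau(\lambda)\sigma(v)$ conjugates the representation of $C_1^+$ into that of $C_1^-$, exactly as in the earlier Galois argument—and here no field extension is needed since $\sqrt{2}\in K$. Together these give $C_1^+\simeq C_1^-\simeq C_2^{\pm}$.

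Finally I would exhibit an element of infinite order. From the formula $C(s_1,s_2^{s_3})=\alpha+\beta\gamma+\alpha l+\beta m$ already in hand, for $C_1^\epsilon$ one gets $C(s_1,s_2^{s_3})=2-2\epsilon\sqrt{2}$, which is $<0$ for $\epsilon=+$ and $>4$ for $\epsilon=-$; in either case it lies outside $[0,4]$, so $s_1s_2^{s_3}$ has infinite order and its image, acting on the two-dimensional quotient $M'$, still has infinite order, whence $C'=C/N$ is infinite. The step I expect to be the real obstacle is the bookkeeping in the isomorphism part: checking that the $G_2'$ transform of $\mathcal{P}(C_1^\epsilon)$ matches $\mathcal{P}(C_2^\epsilon)$ up to the harmless relabelling and duality, and that $\tau$ genuinely interchanges the two $\epsilon$-systems, so that the semi-linear conjugation yields a true group isomorphism rather than a mere coincidence of parameters; the field and infinite-order computations are routine once $lm=\gamma$ is available.
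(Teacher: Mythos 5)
Your proof is correct and takes essentially the approach the paper intends: the paper's own proof is just ``Elle est semblable à celle des propositions 11 et 12,'' and your argument instantiates exactly that template — parameter/field computations (reusing the $(8,8,3)$ proposition and the relation $lm=\gamma$ for $(8,8,6)$), a change of generators (the $G_{2}'$ of the remark, which the paper sets up precisely for these triples) together with a $\theta$-semi-linear conjugation for the isomorphisms, and a value $C(s_{1},s_{2}^{s_{3}})=2-2\epsilon\sqrt{2}$ outside $[0,4]$ for the infiniteness of $C'$. The one genuine (and correct) simplification you add is that here the twist $\sqrt{2}\mapsto-\sqrt{2}$, $\sqrt{3}\mapsto\sqrt{3}$ is already an automorphism of $K=\mathbb{Q}(\sqrt{2},\sqrt{3})$ itself, so no splitting-field extension $\tilde{K}$ is needed, unlike in propositions 11 and 12.
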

\begin{proof}
Elle est semblable à celle des propositions 11 et 12.
\end{proof}
\begin{corollary}
On a les ``présentations'' de $C$ suivantes: 
\[
(w(8,8,3),(s_{2}s_{3}^{s_{1}s_{3}s_{1}})^{6}=1\; \text{et}\; \beta=\alpha);
\]
\[
(w(8,8,6),(s_{2}s_{3}^{s_{1}s_{3}s_{1}})^{3}=1\; \text{et}\; \beta=\alpha).
\]
\end{corollary}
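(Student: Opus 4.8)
Le plan est de voir ces deux « présentations » comme de simples reformulations des équivalences de la proposition 13 (et de leur analogue pour le triple $(8,8,6)$), l'identification du groupe obtenu à $C$ étant alors fournie par la proposition 14. L'idée est exactement celle déjà employée aux propositions 3 et 9: on ajoute à la présentation de Coxeter $w(p,q,r)$ une relation d'ordre portant sur un élément bien choisi, relation dont on sait qu'elle équivaut à $\Delta(G)=0$, hypothèse en vigueur dans toute la section.

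Pour la première présentation, je partirais de l'équivalence (A) $\Leftrightarrow$ (C) de la proposition 13: dans $W(8,8,3)$ muni d'une représentation de réflexion affine, imposer que $s_2 s_3^{s_1 s_3 s_1}$ soit d'ordre $6$, c'est-à-dire $C(s_2,s_3^{s_1 s_3 s_1})=3=4\cos^2\frac{\pi}{6}$, équivaut à $\Delta=0$. Ajouter la relation $(s_2 s_3^{s_1 s_3 s_1})^6=1$ à $w(8,8,3)$ revient donc exactement à imposer $\Delta=0$. La condition supplémentaire $\beta=\alpha$ place alors dans le second cas de la partie 2 de la proposition 13, pour lequel $K=\mathbb{Q}(\sqrt 2,\sqrt 3)$; d'après la proposition 14 le groupe ainsi obtenu est isomorphe à $C$, ce qui établit la première présentation.

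Pour la seconde présentation, je procéderais de façon parallèle dans $W(8,8,6)$, où $\gamma=3$ (racine de $v_6(X)=X-3$) remplace $\gamma=1$. Le point à établir est l'analogue de l'équivalence (A) $\Leftrightarrow$ (C): en reprenant le calcul de $C(s_2,s_3^{s_1 s_3 s_1})$ fait dans la preuve de la proposition 13, mais en y substituant $\gamma=3$ au lieu de $\gamma=1$, on doit trouver que $\Delta=0$ équivaut cette fois à $C(s_2,s_3^{s_1 s_3 s_1})=1=4\cos^2\frac{\pi}{3}$, donc à ce que $s_2 s_3^{s_1 s_3 s_1}$ soit d'ordre $3$. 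Ajouter $(s_2 s_3^{s_1 s_3 s_1})^3=1$ à $w(8,8,6)$ impose alors $\Delta=0$, et la proposition 14 identifie de nouveau le groupe à $C$ sur le corps $\mathbb{Q}(\sqrt 2,\sqrt 3)$. On pourrait aussi éviter ce calcul en transportant l'équivalence du cas $(8,8,3)$ par la construction $G_2'$ de la remarque 3, qui envoie $\gamma=1$ sur $\gamma'=3$ et échange ainsi les ordres $6$ et $3$.

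La principale difficulté est concentrée dans ce dernier calcul d'ordre: il faut vérifier que la valeur de $C(s_2,s_3^{s_1 s_3 s_1})$, quand on pose $\gamma=3$ et $\Delta=0$, vaut bien $1$ (ordre $3$) et non $3$ (ordre $6$), de sorte que c'est la relation $(s_2 s_3^{s_1 s_3 s_1})^3=1$, et non la relation d'ordre $6$, qui capture $\Delta=0$ pour le triple $(8,8,6)$. Ce calcul est entièrement analogue à celui de la proposition 13 et ne présente pas de difficulté de fond; tout le reste se lit directement sur les propositions 13 et 14.
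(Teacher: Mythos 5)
Your treatment of the first presentation is correct and is exactly the paper's implicit argument (its proof is just ``clair d'après ce qui précède''): Proposition 13 gives $\Delta=0\Leftrightarrow C(s_{2},s_{3}^{s_{1}s_{3}s_{1}})=3$ (ordre $6$), and the supplementary condition $\beta=\alpha$ places you in the second case of part 2 of Proposition 13, so $K=\mathbb{Q}(\sqrt{2},\sqrt{3})$ and Proposition 14 identifies the group as $C_{1}^{\epsilon}\simeq C$. Your prediction of the order computation for $(8,8,6)$ is also right: redoing the computation of the proof of Proposition 13 with $\gamma$ left general gives $C(s_{2},s_{3}^{s_{1}s_{3}s_{1}})=2\alpha\beta+\gamma(\beta-1)^{2}+\beta(\alpha l+\beta m)$ (which reduces to the paper's formula at $\gamma=1$), and with $\gamma=3$ and $\Delta=0$, i.e. $\alpha l+\beta m=2-2\alpha-2\beta$, this equals $1$, so the element has order $3$. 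Note, however, that this value is $1$ whatever $\alpha$ and $\beta$ are.

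The gap is in your final step for the second presentation, ``la proposition 14 identifie de nouveau le groupe à $C$'': it does not under the hypothesis $\beta=\alpha$ that you impose (following the printed statement). Proposition 14's $(8,8,6)$ entry is $\mathcal{P}(C_{2}^{\epsilon})=\mathcal{P}(2+\epsilon\sqrt{2},2-\epsilon\sqrt{2},3;-3+\sqrt{3},-3-\sqrt{3})$, i.e. it requires $\beta=4-\alpha\neq\alpha$. If instead $\beta=\alpha$, $\gamma=3$ and $\Delta=0$, then $\alpha l$ and $\beta m$ are the roots of $X^{2}-(2-4\alpha)X+3\alpha^{2}=(X-1+2\alpha)^{2}+1$, hence $\alpha l=1-2\alpha\pm i$ and $K=\mathbb{Q}(\sqrt{2},i)$: by Proposition 16 (the groups $D_{2}^{\epsilon}$) and Théorème 4, the group so obtained is the affine group attached to $G_{13}$, whose quotient $G/N\simeq G_{13}$ is finite, whereas $C'$ is infinite by Proposition 14 — so it is not $C$. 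In other words, the second line of the corollary carries a misprint (the condition should read $\beta=4-\alpha$, equivalently $\beta\neq\alpha$), and a proof taking ``$\beta=\alpha$'' at face value cannot close. Your own alternative route via Remark 3 would have exposed this: passing to $G_{2}'$ replaces $\alpha$ by $4-\alpha$ while keeping $\beta$, so starting from $C_{1}^{\epsilon}$ (where $\beta=\alpha$) one lands on an $(8,8,6)$ form whose two $v_{8}$-parameters are distinct; indeed $-\beta(m+2)=-3+\sqrt{3}$ and $-(\alpha l+2\beta)=-3-\sqrt{3}$, which is exactly $\mathcal{P}(C_{2}^{-\epsilon})$. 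The correct completion of your argument is therefore your order-$3$ verification together with the condition $\beta=4-\alpha$, which is what Proposition 14 actually supports.
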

\begin{proof}
C'est clair d'après ce qui précède.
\end{proof}
On peut remarquer que comme $\Delta(C)=0$ et $\alpha l\neq\beta m$ avec un corps $K$ réel, il n'y a aucune forme bilinéaire non nulle $C$-invariante.
\subsubsection{Structure de $N$ et la suite $(\star\star)$.}
L'anneau des entiers de $\mathbb{Q}(\sqrt{2},\sqrt{3})$ est $\mathcal{O}:=\mathcal{O}(\mathbb{Q}(\sqrt{2},\sqrt{3})=\{\frac{1}{2}(2a+b\sqrt{2}+2c\sqrt{3}+d\sqrt{6}|(a,b,c,d)\in \mathbb{Z}^{4}\}$. C'est un anneau principal.
 \begin{proposition}
On a: $I_{1}=I_{2}=I_{3}:=I$ est l'idéal de $\mathcal{O}$ engendré par $\sqrt{2}$ . $N$ est un $\mathcal{O}G$-module libre de rang $2$ et un $\mathbb{Z}$-module libre de rang $8$.
\end{proposition}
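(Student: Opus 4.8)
The plan is to imitate the proofs of Theorem 3 (for $G_{12}$) and of the non-splitting proposition for the $B_i^{\epsilon}$, working with a single representative, since by the preceding proposition all the groups $C_i^{\epsilon}$ are isomorphic. I would choose $C_2^{\epsilon}$, with $\gamma=3$ and hence $4-\gamma=1$, so that the first basic translation stays integral. Recall that, $\Delta=0$ being assumed throughout, the three roots $c_1,c_2,c_3$ satisfy a single linear relation and therefore span a $2$-dimensional $K$-subspace $V$ containing $N$. First I would record the commutator translations coming from the construction fondamentale, namely $(s_1z_1)^2=\frac{-2}{4-\gamma}c_1=-2c_1\in[N,s_1]$, and the analogous $\frac{-2}{l+2}c_2\in[N,s_2]$ and $\frac{-2}{m+2}c_3\in[N,s_3]$, together with the translates $s_j\!\cdot\! c_i-c_i$ that carry one root into a neighbouring one.

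Next I would establish the $\mathcal{O}$-module structure of $N$. The idea is that $N$ is stable under multiplication by the scalars $2$, $\alpha$, $\alpha l$ and $\beta m$ furnished by the representation; since $\alpha=2+\epsilon\sqrt2$ gives $\sqrt2=\epsilon(\alpha-2)$ and $\alpha l=-3+\sqrt3$ gives $\sqrt3=\alpha l+3$, one extracts $\sqrt2$ and $\sqrt3$, hence $\sqrt6$, so that $N$ is at least a $\mathbb{Z}[\sqrt2,\sqrt3]$-module. One then checks that the extra half-integer generator $\frac{\sqrt2+\sqrt6}{2}$ of $\mathcal{O}$ also preserves $N$, upgrading $N$ to a finitely generated $\mathcal{O}$-submodule of $V$. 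Being torsion-free and spanning $V$ over $K$, and $\mathcal{O}$ being principal, $N$ is then automatically a free $\mathcal{O}$-module of rank $2$. Because $[\mathcal{O}:\mathbb{Z}]=4$, this yields at once that $N$ is a free $\mathbb{Z}$-module of rank $8$, settling the last assertion.

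It then remains to identify the ideals $I_i$, defined by $[N,s_i]=I_ic_i$. As in the proof for the $B_i^{\epsilon}$, I would first manufacture $\sqrt2\,c_i\in[N,s_i]$: starting from $-2c_1\in[N,s_1]$ and using the identities $\alpha l\,\beta m=\alpha\beta$ and $\alpha l+\beta m=6-2(\alpha+\beta)$ together with the stability under $\sqrt2$ and $\sqrt3$ just obtained, one produces $\sqrt2\,c_1$, and symmetrically $\sqrt2\,c_2$ and $\sqrt2\,c_3$. Conversely, rewriting the relation $(\mathcal{E})$ in this representation, any $\lambda$ with $\lambda c_i\in[N,s_i]$ must be divisible by $\sqrt2$, since $\sqrt2$ divides the relevant coefficients and is not a unit of $\mathcal{O}$. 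This forces $I_1=I_2=I_3=(\sqrt2)$; alternatively the equality of the three ideals follows by transporting $[N,s_1]$ through the isomorphisms of the preceding proposition.

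The main obstacle is the ideal computation inside the biquadratic ring $\mathcal{O}$: one must keep careful track of the ramified prime $(\sqrt2)$ above $2$ while disentangling the $\sqrt3$-contributions, and must verify the converse divisibility uniformly for all three indices despite the asymmetry $\alpha\neq\beta$ in $C_2^{\epsilon}$. The truly delicate point on which everything rests is showing that multiplication by the half-integer generator $\frac{\sqrt2+\sqrt6}{2}$ genuinely preserves $N$ itself, and not merely $N\otimes_{\mathbb{Z}}\mathbb{Q}$; once the full $\mathcal{O}$-stability is secured, both the freeness of rank $2$ and the value $(\sqrt2)$ of the ideals follow by the routine verifications sketched above.
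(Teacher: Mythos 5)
Your skeleton does coincide with the paper's: the paper also works with the $(8,8,6)$ representative (so $\gamma=3$, $4-\gamma=1$, $l=\frac{1}{2}(-6+3\sqrt{2}+2\sqrt{3}-\sqrt{6})$), also starts from the translations $2c_{1}\in[N,s_{1}]$, $\frac{-2}{l+2}c_{2}\in[N,s_{2}]$, $\frac{-2}{m+2}c_{3}\in[N,s_{3}]$, also extracts stability of $N$ under $\sqrt{2},\sqrt{3},\sqrt{6}$ from the scalars furnished by the representation, and also finishes via principality of $\mathcal{O}$ and $[\mathcal{O}:\mathbb{Z}]=4$. But the two steps of your proposal that actually identify the ideals are gaps.

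First, the converse inclusion (``any $\lambda$ with $\lambda c_{i}\in[N,s_{i}]$ is divisible by $\sqrt{2}$'') cannot be obtained from the relation $(\mathcal{E})$. That relation, $-1=(4-\gamma)\lambda_{1}+(l+2)\lambda_{2}+(m+2)\lambda_{3}$, is the obstruction to splitting $(\star\star)$; in the paper it appears only in the corollary \emph{after} this proposition, where it is refuted \emph{using} the already computed value of $I$. It says nothing about which multiples of $c_{i}$ lie in $N$, so using it here is circular; note also that one of its coefficients is $4-\gamma=1$, which $\sqrt{2}$ certainly does not divide. The paper's actual mechanism is direct generation: it multiplies $\frac{2}{l+2}$ by $\sqrt{2},\sqrt{3},\sqrt{6}$ and combines to get $\sqrt{2}+\sqrt{6}$, $2\sqrt{2}$, $2\sqrt{6}$, $2+2\sqrt{3}$ in $I_{2}$, then transports between the three ideals via the inclusions $lI_{2}\subset I_{3}$ and $mI_{3}\subset I_{2}$. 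Second, the step you yourself call ``the truly delicate point on which everything rests'' --- that multiplication by $\frac{\sqrt{2}+\sqrt{6}}{2}$ preserves $N$ --- is never argued; ``one then checks'' is all you offer. This is not a formality: the scalars the construction provides ($\alpha=2+\epsilon\sqrt{2}$, $\gamma=3$, $\alpha l=-3+\sqrt{3}$, $\beta m=-3-\sqrt{3}$) give only $\mathbb{Z}[\sqrt{2},\sqrt{3}]$, which has index $2$ in $\mathcal{O}$ and is not integrally closed, hence not principal, so your ``automatic'' freeness argument is unavailable over it; moreover $\frac{\sqrt{2}+\sqrt{6}}{2}$ is a \emph{unit} of $\mathcal{O}$ (its square is $2+\sqrt{3}$), so whether or not it acts on $N$ genuinely changes which ideal one obtains --- for instance $\sqrt{2}+\sqrt{6}=2\cdot\frac{\sqrt{2}+\sqrt{6}}{2}$ generates $(2)$ over $\mathcal{O}$, not $(\sqrt{2})$. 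You have correctly spotted a subtlety that the paper itself passes over silently, but spotting it and deferring it leaves both halves of the statement unproved. (Incidentally, the identities you invoke, $\alpha l\,\beta m=\alpha\beta$ and $\alpha l+\beta m=6-2(\alpha+\beta)$, belong to the $(8,8,3)$ form of Proposition 13, where $\gamma=1$; for your chosen $C_{2}^{\epsilon}$ one has $\alpha l\,\beta m=\alpha\beta\gamma=6$ and $\alpha l+\beta m=-6$.)
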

\begin{proof}
Comme $l=\frac{1}{2}(-6+3\sqrt{2}+2\sqrt{3}-\sqrt{6})$ et $m=\frac{1}{2}(-6-3\sqrt{2}-2\sqrt{3}-\sqrt{6})$, nous voyons que $l$ et $m$ sont des entiers algébriques. De même $\frac{2}{l+2}=-2+3\sqrt{2}-2\sqrt{3}+\sqrt{6}$ et $\frac{2}{m+2}=-2-3\sqrt{2}+2\sqrt{3}+\sqrt{6}$ sont des entiers algébriques.
Si $\zeta\in N$, alors $\alpha\zeta$, $\beta\zeta$, $\gamma\zeta$ et $\theta\zeta$ sont dans $N$ donc si $\zeta\in N$ alors $\sqrt{2}\zeta$, $\sqrt{3}\zeta$ et  $\sqrt{6}\zeta$ sont dans $N$: $N$ est un $\mathcal{O}G$-module. Les ensembles $I_{j}$ sont des idéaux de $\mathcal{O}$, ce sont donc des idéaux principaux.

Nous avons:
\begin{itemize}
  \item $(s_{1}(s_{2}s_{3})^{3})^{2}=\frac{2}{4-\gamma}c_{1}=2c_{1}$, donc $2\in I_{1}$;
  \item $(s_{2}(s_{3}s_{1})^{4})^{2}=\frac{-2}{l+2}c_{2}$, donc $\frac{2}{l+2}\in I_{2}\subset I_{1}$;
  \item $(s_{3}(s_{1}s_{2})^{4})^{2}=\frac{-2}{m+2}c_{3}$, donc $\frac{2}{m+2}\in I_{3}\subset I_{1}.$
\end{itemize}
Nous venons de voir que $2\in I_{1}$ donc $2\alpha=4+2\sqrt{2}\in I_{1}$ et $2\sqrt{2}\in I_{1}$; de même $2\alpha l=-6+3\sqrt{3}\in I_{1}$ et $2\sqrt{3}\in I_{1}$.\\
Comme $\frac{2}{l+2}\in I_{1}$ on voit ainsi que $\sqrt{2}+\sqrt{6}$ et $\sqrt{2}-\sqrt{6}$ sont dans $I_{1}$  et $I_{1}$ est l'idéal principal engendré par $\sqrt{2}$.

Nous avons déjà vu que $\frac{2}{l+2}$ est dans $I_{2}$ donc:
\begin{gather*}
(1) = 2-3\sqrt{2}+2\sqrt{3}-\sqrt{6}\in I_{2} \\
(2)  =  \sqrt{2}(1) = -6+2\sqrt{2}-2\sqrt{3}+2\sqrt{6}\in I_{2} \\
(3)  =  \sqrt{3}(1) = 6-3\sqrt{2}+2\sqrt{3}-3\sqrt{6}\in I_{2} \\
(4)  =  \sqrt{6}(1)= -6+6\sqrt{2}-6\sqrt{3}+2\sqrt{6}\in I_{2} 
\end{gather*}
Nous étudions maintenant ces quatre équations: $(2)+(3)=-\sqrt{2}-\sqrt{6}\in I_{2}$, donc $\sqrt{2}+\sqrt{6}\in I_{2}$, $\sqrt{3}(\sqrt{2}+\sqrt{6})=3\sqrt{2}+\sqrt{6}\in I_{2}$ et $2\sqrt{2}\in I_{2}$ et $\sqrt{6}-\sqrt{2}\in I_{2}$: $2\sqrt{6}\in I_{2}$; de $(1)$ on tire alors $2+2\sqrt{3}\in I_{2}$.\\
Nous avons des résultats analogues pour $I_{3}$.\\
Nous utilisons maintenant les inclusions $lI_{2}\subset I_{3}$ et $mI_{3}\subset I_{2}$. Nous avons $l(\sqrt{6}+\sqrt{2})=2\sqrt{+}-2\sqrt{6}\in I_{3}$. Comme $2\sqrt{6}\in I_{3}$ nous voyons que $2\sqrt{3}\in I_{2}$ d'où $2\in I_{3}$: $I_{3}$ (resp. $I_{2}$) est l'idéal principal engendré par $\sqrt{2}$. Tout le reste est clair.
\end{proof}
\begin{corollary}
La suite $(\star\star)$ est non scindée.
\end{corollary}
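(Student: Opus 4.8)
Le plan est de reproduire, dans le corps $K=\mathbb{Q}(\sqrt{2},\sqrt{3})$, l'argument de non-scindage déjà employé pour $G_{12}$ ainsi que dans les Propositions 10 et 12, mais en l'alimentant cette fois par le calcul structurel de la Proposition 14. D'abord je rappellerais, d'après les corollaires 1 et 2 du chapitre 8 de \cite{Z5}, qu'une section de $(\star\star)$ équivaut à la donnée de scalaires $\lambda_i\in K^{\star}$ vérifiant $\lambda_i c_i\in N$ et posant $\sigma(s_i)=s_i(\lambda_i c_i)$, et que l'existence d'une telle section équivaut à la résolubilité de la relation $(\mathcal{E})$, laquelle, dans tous les cas déjà traités, prend la forme
\[
a_1\lambda_1+a_2\lambda_2+a_3\lambda_3=-1,
\]
dont le membre de droite est l'unité $-1$.

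Ensuite j'invoquerais la Proposition 14, dont le but même est d'identifier les idéaux qui gouvernent cette relation: elle fournit $I_1=I_2=I_3=I=(\sqrt{2})$, l'idéal principal de $\mathcal{O}$ engendré par $\sqrt{2}$. Comme la contrainte $\lambda_i c_i\in[N,s_i]$ force chaque contribution $a_i\lambda_i$ du membre de gauche de $(\mathcal{E})$ à tomber dans $I=(\sqrt{2})$, c'est le membre de gauche tout entier qui appartient à $(\sqrt{2})$.

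La contradiction surgit alors exactement comme auparavant: le membre de droite $-1$ est une unité de $\mathcal{O}$, tandis que $\sqrt{2}$ n'est pas inversible, de sorte que $-1\notin(\sqrt{2})$. Ainsi $(\mathcal{E})$ n'admet aucune solution, aucune section $\sigma$ n'existe, et la suite $(\star\star)$ est non scindée.

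Le point véritablement délicat ne réside pas dans ce corollaire, mais en amont, dans la Proposition 14: le travail effectif consiste à établir que tout coefficient intervenant dans $(\mathcal{E})$ tombe dans l'idéal propre $(\sqrt{2})$. Pour cela on part de $2\in I_1$, $\frac{2}{l+2}\in I_2$ et $\frac{2}{m+2}\in I_3$, puis l'on exploite les inclusions $lI_2\subset I_3$ et $mI_3\subset I_2$ pour conclure $I=(\sqrt{2})$. Une fois cela acquis, l'énoncé présent se réduit à l'argument en une ligne opposant une unité à un idéal propre, tout à fait parallèle aux cas $G_{12}$ et $B_i^{\epsilon}$.
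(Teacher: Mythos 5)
Votre démonstration est correcte et suit exactement la même voie que celle du texte : le papier se contente d'observer que, grâce à la Proposition 14 ($I_1=I_2=I_3=I=(\sqrt{2})$), le membre de $(\mathcal{E})$ contenant les $\lambda_j$ appartient à l'idéal propre $I$, tandis que l'autre membre vaut $-1$, une unité de $\mathcal{O}$, d'où l'impossibilité. Vous explicitez simplement ce que la phrase unique du papier laisse implicite (et vous notez à juste titre que tout le travail réel est dans la Proposition 14).
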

\begin{proof}
Le membre de droite de la suite $(\star\star)$ est dans $I$ donc on a une impossibilité.
\end{proof}
\subsection{Le corps $\mathbb{Q}(\sqrt{2},i)$}
Les triples $(8,8,3)$, $(8,8,6)$, $(3,4,8)$ et $(4,6,8)$.
\subsubsection{Généralités. Le groupe $G$}
\begin{proposition}
On obtient le corps $\mathbb{Q}(\sqrt{2},i)$ pour les triples suivants (certains avec une condition supplémentaire):
\begin{itemize}
  \item $D_{1}^{\epsilon}$: $(p,q,r)=(8,8,3)$, $\mathcal{P}(D_{1}^{\epsilon})=\mathcal{P}(2+\epsilon\sqrt{2},2-\epsilon\sqrt{2},1;-1+i,-1-i)$;
  \item $D_{2}^{\epsilon}$: $(p,q,r)=(8,8,6)$, $\mathcal{P}(D_{2}^{\epsilon})=\mathcal{P}(2+\epsilon\sqrt{2},2+\epsilon\sqrt{2},3;-3-2\epsilon\sqrt{2}+i,-3-2\epsilon\sqrt{2}-i)$;
  \item $D_{3}^{\epsilon}$: $(p,q,r)=(3,4,8)$, $\mathcal{P}(D_{3}^{\epsilon})=\mathcal{P}(1,2,2+\epsilon\sqrt{2};-1-\epsilon\sqrt{2}+i,-1-\epsilon\sqrt{2}-i)$;
  \item $D_{4}^{\epsilon}$: $(p,q,r)=(4,6,8)$, $\mathcal{P}(D_{4}^{\epsilon})=\mathcal{P}(2,3,2+\epsilon\sqrt{2};-3-\epsilon\sqrt{2}+i,-3-\epsilon\sqrt{2}-i)$.
\end{itemize}
Dans les quatre cas, si $t=s_{1}s_{2}s_{3}$ et si $t'$ est l'image de $t$ dans $G'$, on a $t'^{3}=i\,id_{M'}$.
\end{proposition}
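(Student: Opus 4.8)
La proposition se scinde en deux énoncés: d'une part l'obtention du corps $K=\mathbb{Q}(\sqrt{2},i)$ avec les systèmes de paramètres annoncés, d'autre part la relation uniforme $t'^{3}=i\,\mathrm{id}_{M'}$. Pour le premier point je procéderais exactement comme dans les propositions précédentes. Pour un triple $(p,q,r)$ fixé on affecte à $\alpha$, $\beta$, $\gamma$ les racines prescrites de $v_{p}$, $v_{q}$, $v_{r}$, on écrit la condition $\Delta=0$ (nullité du déterminant de la matrice de Cartan) sous la forme $\alpha l+\beta m=8-2(\alpha+\beta)-2\gamma$, et on la combine à la relation de produit $(\alpha l)(\beta m)=\alpha\beta\gamma$. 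Ainsi $\alpha l$ et $\beta m$ sont les racines de
\[
Q(X)=X^{2}-\bigl(8-2(\alpha+\beta)-2\gamma\bigr)X+\alpha\beta\gamma,
\]
dont le discriminant vaut $-4$ dans chacun des quatre cas; on a donc $\alpha l,\beta m\in\mathbb{Q}(\sqrt{2},i)\setminus\mathbb{Q}(\sqrt{2})$ et $K=\mathbb{Q}(\sqrt{2},i)$. La confrontation aux systèmes $\mathcal{P}(D_{i}^{\epsilon})$ est alors immédiate; la ``condition supplémentaire'' n'est autre que le choix des racines $\alpha$, $\beta$ (par exemple $\alpha\neq\beta$ pour le triple $(8,8,3)$) qui conduit à $i$ plutôt qu'au corps réel $\mathbb{Q}(\sqrt{2},\sqrt{3})$ rencontré précédemment.

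Pour le second point, l'observation décisive est que $b$ engendre le radical, donc est fixé par chaque $s_{i}$, et par suite par $t=s_{1}s_{2}s_{3}$ avec la valeur propre $1$; de plus $N$ opère trivialement sur $M'=M/\langle b\rangle$, de sorte que $M'$ est bien un $G'$-module et que $t'$ y agit. Comme chaque $s_{i}$ est de déterminant $-1$ sur $M$ et fixe $b$, on obtient $\det(t')=(-1)^{3}=-1$. Pour la trace, je calculerais directement la matrice de $s_{1}s_{2}s_{3}$ dans la base des racines $(a_{1},a_{2},a_{3})$: ses termes diagonaux sont $-1+\alpha+\beta+\alpha l$, $\gamma-1$ et $-1$, d'où
\[
\mathrm{tr}_{M}(s_{1}s_{2}s_{3})=-3+\alpha+\beta+\gamma+\alpha l,
\]
où $\alpha l$ désigne le produit triangulaire figurant comme quatrième entrée de $\mathcal{P}$. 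En retranchant la contribution de $b$, il vient $\mathrm{tr}_{M'}(t')=-4+\alpha+\beta+\gamma+\alpha l$.

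Il reste à substituer les quatre systèmes de paramètres. Dans chaque cas la partie entière et les termes en $\epsilon\sqrt{2}$ se compensent: pour $D_{1}^{\epsilon}$ on trouve $-4+4+1+(-1+i)=i$, pour $D_{2}^{\epsilon}$ le terme $2\epsilon\sqrt{2}$ de $\alpha+\beta$ annule celui de $\alpha l$, et de même pour $D_{3}^{\epsilon}$ et $D_{4}^{\epsilon}$, si bien que $\mathrm{tr}_{M'}(t')=i$ uniformément. Le polynôme caractéristique de $t'$ est donc $P_{t'}(X)=X^{2}-iX-1$. Comme $X^{3}-i=(X^{2}-iX-1)(X+i)$, le théorème de Cayley--Hamilton donne $t'^{2}=i\,t'+\mathrm{id}_{M'}$, puis
\[
t'^{3}=i\,t'^{2}+t'=i(i\,t'+\mathrm{id}_{M'})+t'=i\,\mathrm{id}_{M'},
\]
ce qui est l'assertion voulue.

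La principale difficulté n'est pas le calcul mais le contrôle du signe et de la normalisation dans la formule de trace. Le produit $\alpha l$ qui y intervient est l'un des deux ``produits triangulaires'' formés à partir des coefficients hors diagonale des réflexions (invariants par homothétie des racines et liés par $(\alpha l)(\beta m)=\alpha\beta\gamma$), et c'est exactement celui qu'il faut lire sur $\mathcal{P}$. Le choix de $\beta m$ à la place --- c'est-à-dire le remplacement de $R$ par sa duale $R^{\star}$, qui échange $l$ et $m$ --- donnerait $\mathrm{tr}_{M'}(t')=-i$ et $t'^{3}=-i\,\mathrm{id}_{M'}$; l'orientation de la représentation doit donc être fixée en cohérence avec les systèmes de paramètres choisis pour aboutir à $+i$.
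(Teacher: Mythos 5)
Your proposal is correct and is essentially the paper's own (unwritten) argument: the paper disposes of this proposition with ``Elle ne présente pas de difficultés'', and your computation supplies exactly the routine verification intended, by the same method the paper applies in the neighbouring propositions. Indeed, your quadratic $Q(X)=X^{2}-\bigl(8-2(\alpha+\beta)-2\gamma\bigr)X+\alpha\beta\gamma$ (whose discriminant is $-4$ in all four cases) mirrors the treatment of $(8,8,3)$ in Proposition 12, and your identity $P_{t'}(X)=X^{2}-iX-1$ followed by Cayley--Hamilton is the same characteristic-polynomial device the paper itself invokes for $G_{12}$ and in the $G_{22}$ section.
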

\begin{proof}
Elle ne présente pas de difficultés.
\end{proof}
\begin{proposition}
Soient le groupe $W(8,8,3)$ et $R(\alpha,\beta,\gamma ;l)$ une représentation de réflexion de ce groupe. Alors $\alpha,\beta$ sont racines de $v_{8}(X)$ et $\gamma=1$. De plus deux des conditions suivantes impliquent les deux autres:
 \begin{itemize}
  \item (A) \;$\Delta=0$;
  \item (B) \;$\beta \neq \alpha$;
  \item (C) \;$(s_{2}s_{3}^{s_{1}})^{3}=1$;
  \item (D) \;$(s_{2}s_{1}^{s_{3}})^{4}=1$. 
  \end{itemize}
\end{proposition}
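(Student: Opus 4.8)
L'idée est de ramener chacune des quatre assertions à une condition portant sur la seule quantité $S:=\alpha l+\beta m$, puis de constater que l'arithmétique des racines de $v_{8}(X)$ réduit tout l'énoncé à une courte disjonction de cas gouvernée précisément par (B). D'abord, comme $s_{1}s_{2}$ et $s_{1}s_{3}$ sont d'ordre $8$ et $s_{2}s_{3}$ d'ordre $3$ dans $W(8,8,3)$, les nombres $\alpha=C(s_{1},s_{2})$ et $\beta=C(s_{1},s_{3})$ sont racines de $v_{8}(X)=X^{2}-4X+2$, on a $\gamma=C(s_{2},s_{3})=1$ et $lm=\gamma=1$. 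En reprenant le calcul de $\Delta=8-2\alpha-2\beta-2\gamma-(\alpha l+\beta m)$ déjà effectué, la condition (A) s'écrit $S=6-2(\alpha+\beta)$.

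Ensuite je calculerais les deux valeurs de $C$ utiles par le même calcul de type Cartan que celui donnant la formule $C(s_{1},s_{2}^{s_{3}})=\alpha+\beta\gamma+\alpha l+\beta m$ établie plus haut, en exploitant l'invariance de $C$ par conjugaison. Comme $C(s_{2},s_{1}^{s_{3}})=C(s_{1},s_{2}^{s_{3}})$ et $C(s_{2},s_{3}^{s_{1}})=C(s_{3},s_{2}^{s_{1}})$, on obtient (avec $\gamma=1$ et $lm=1$)
\[
C(s_{2},s_{1}^{s_{3}})=\alpha+\beta+S,\qquad C(s_{2},s_{3}^{s_{1}})=1+\alpha\beta+S.
\]
Comme l'ordre $4$ correspond à $C=2$ et l'ordre $3$ à $C=1$, les conditions (D) et (C) deviennent respectivement $S=2-(\alpha+\beta)$ et $S=-\alpha\beta$.

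Ainsi (A), (C) et (D) sont chacune une équation du type ``$S=$ (valeur explicite)'', et l'énoncé se réduit à comparer ces trois seconds membres dans les deux cas autorisés par $v_{8}$. Si (B) est vraie, alors $\{\alpha,\beta\}=\{2+\sqrt{2},2-\sqrt{2}\}$, donc $\alpha+\beta=4$, $\alpha\beta=2$, et les trois seconds membres valent tous $-2$: les conditions (A), (C) et (D) sont alors mutuellement équivalentes. Si (B) est fausse, alors $\alpha=\beta$ avec $\alpha^{2}=4\alpha-2$, et les trois seconds membres sont $6-4\alpha$, $2-4\alpha$ et $2-2\alpha$, deux à deux distincts (leurs différences valent $4$, $4-2\alpha=\pm2\sqrt{2}$ et $-2\alpha$, toutes non nulles): au plus une des conditions (A), (C), (D) peut alors être satisfaite.

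L'assertion ``deux impliquent les deux autres'' s'en déduit en examinant les six paires: une paire contenant (B) entraîne les deux autres grâce à l'équivalence du premier cas; une paire formée de deux des conditions (A), (C), (D) est impossible lorsque (B) est fausse (exclusion mutuelle), donc (B) doit être vraie et la troisième suit. Le seul travail effectif est le calcul des deux valeurs de $C$ de la deuxième étape; une fois ces réductions obtenues, la conclusion est une vérification arithmétique immédiate, et je n'attends pas d'autre obstacle que la gestion soigneuse des signes dans les entrées de Cartan conjuguées.
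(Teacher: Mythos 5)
Your reorganization is attractive: reducing (A), (C), (D) to equations on the single quantity $S=\alpha l+\beta m$ and letting (B) govern a two-case arithmetic check is genuinely cleaner than the paper's proof, which verifies all six pairs one by one; your Cartan formulas and the arithmetic of the roots of $v_{8}$ are correct. But there is a genuine gap at the dictionary step, and it sits exactly where the paper spends most of its effort. You assert that (D), i.e.\ $(s_{2}s_{1}^{s_{3}})^{4}=1$, est équivalente à $C(s_{2},s_{1}^{s_{3}})=2$ (``l'ordre $4$ correspond à $C=2$''). C'est faux: $(s_{2}s_{1}^{s_{3}})^{4}=1$ vaut aussi lorsque $s_{2}$ et $s_{1}^{s_{3}}$ commutent (ordre $2$). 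De plus l'ordre $2$ n'est pas caractérisé par $C=0$: comme $C(s_{2},s_{1}^{s_{3}})=(l+1)(\alpha+\beta m)$ est le produit des deux coefficients de réflexion, si un seul des deux facteurs s'annule le produit $s_{2}s_{1}^{s_{3}}$ agit sur le plan des racines par un bloc de Jordan de valeur propre $-1$, donc est d'ordre infini; la commutation exige l'annulation des deux facteurs, ce qui (avec $lm=\gamma=1$) force $l=m=-1$ et $\alpha=\beta$. Dans cette configuration, (D) est vraie alors que $S=-2\alpha\neq 2-2\alpha$: votre équivalence affichée pour (D) est donc fausse, et avec elle l'affirmation que, lorsque (B) est fausse, (D) se lit $S=2-(\alpha+\beta)$.

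La réparation reste dans votre cadre, mais elle doit être écrite: dans le cas de commutation on a $\alpha=\beta$ et $S=-2\alpha$, ce qui est incompatible avec (B) trivialement, avec (A) (cela forcerait $\alpha=3$, qui n'est pas racine de $v_{8}$) et avec (C) (cela forcerait $\alpha=1$); donc dans chaque paire contenant (D) le cas exceptionnel est exclu et votre trichotomie s'applique ensuite. C'est précisément ce que fait la démonstration du papier dans ses cas 3), 5) et 6), via la factorisation $C(s_{2},s_{1}^{s_{3}})=(l+1)(\alpha+\beta m)$. Il faut aussi, pour (C) et (D), exclure l'ordre $1$ ($s_{2}=s_{3}^{s_{1}}$, etc.), ce qui est immédiat puisque les racines concernées sont linéairement indépendantes. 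En l'état, l'étape ``(D) devient $S=2-(\alpha+\beta)$'' est incorrecte, et c'est justement l'obstacle que vous annonciez ne pas attendre.
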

\begin{proof}
1) Supposons les conditions (A) et (B) satisfaites. Alors $C(s_{2},s_{3}^{s_{1}}) =\alpha\beta+\gamma+(\alpha l+\beta m)=2+1-2=1$ et $s_{2}s_{3}^{s_{1}}$ est d'ordre $3$; $C(s_{2},s_{1}^{s_{3}}) =\beta\gamma+\alpha+(\alpha l+\beta m)=4-2=2$ et $s_{2}s_{1}^{s_{3}}$ est d'ordre $4$.\\
2) Supposons les conditions (A) et (C) satisfaites. Comme on ne peut pas avoir $s_{2}=s_{3}^{s_{1}}$, $(s_{2}s_{3}^{s_{1}})^{3}$ est d'ordre $3$ et $C(s_{2},s_{3}^{s_{1}})=\alpha\beta+1+6-2(\alpha+\beta)=1$, d'où $\alpha\neq\beta$. Dans ces conditions  on a aussi $s_{2}s_{1}^{s_{3}}$  d'ordre $4$ d'après la condition (A).\\
3) Supposons les conditions (A) et (D) satisfaites. Alors $C(s_{2},s_{1}^{s_{3}}) =\beta+\alpha+(\alpha l+\beta m)=(l+1)(\alpha+\beta m)$ donc si $s_{2}s_{1}^{s_{3}}$ est d'ordre $2$, on a $l+1=\alpha+\beta m=0$ donc $l=-1=m$ et $\alpha=\beta$. C'est impossible, car alors $\Delta\neq 0$; donc $s_{2},s_{1}^{s_{3}}$ est d'ordre $4$ et $\beta+\alpha+(\alpha l+\beta m)=2=6-(\alpha+\beta)$: $\alpha+\beta=4$ et $\beta \neq\alpha$. Dans ces conditions, on a aussi $(s_{2}s_{3}^{s_{1}})^{3}=1$.\\
4) Supposons les conditions (B) et (C) satisfaites. Alors $C(s_{2},s_{3}^{s_{1}}) =\alpha\beta+1+(\alpha l+\beta m)=1$ donc $\alpha l+\beta m=-2$ et $\Delta=0$ On a donc aussi $(s_{2}s_{1}^{s_{3}})$ est d'ordre $4$.\\
5) Supposons les conditions (B) et (D) satisfaites. Comme ci-dessus, c'est impossible si $s_{2}s_{3}^{s_{1}}$ est d'ordre $2$. Alors $C(s_{2},s_{1}^{s_{3}}) =2=\alpha +\beta+(\alpha l+\beta m)$ donc $\alpha l+\beta m=-2$ et nous concluons comme ci-dessus.\\
6) Supposons les conditions (C) et (D) satisfaites. Si $s_{2}s_{1}^{s_{3}}$  est d'ordre $2$, alors $C(s_{2},s_{3}^{s_{1}}) =(\alpha+m)(\beta+l)=(\alpha-1)^{2}=2\alpha-1\neq 1$ et ceci est impossible.Donc nous avons $C(s_{2},s_{3}^{s_{1}}) =\alpha\beta+1+(\alpha l+\beta m)=1$ et $C(s_{2},s_{1}^{s_{3}}) =\beta+\alpha+(\alpha l+\beta m)=2$. De la première équation nous tirons $\alpha l+\beta m=-\alpha\beta$, d'où, en reportant dans la deuxième équation: $2=\alpha+\beta-\alpha\beta$ et la seule possibilité est $\beta\neq\alpha$. Nous pouvons appliquer le (B) pour conclure.
\end{proof}
\begin{proposition}
Soient le groupe $W(8,8,6)$ et $R(\alpha,\beta,\gamma ;l)$ une représentation de réflexion de ce groupe. Alors $\alpha,\beta$ sont racines de $v_{8}(X)$ et $\gamma=3$. De plus deux des conditions suivantes impliquent les deux autres:
 \begin{itemize}
  \item (A) \; $\Delta=0$;
  \item (B) \; $\beta = \alpha$;
  \item (C) \;$(s_{1}s_{2}^{s_{3}})^{4}=1$;
  \item (D) \;$(s_{2}s_{3}^{s_{1}s_{3}})^{3}=1$. 
  \end{itemize}
\end{proposition}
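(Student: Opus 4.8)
Le plan est de ramener chacune des conditions (A), (C) et (D) à une équation portant sur la seule quantité $E:=\alpha l+\beta m$, à coefficients ne dépendant que de $\alpha$ et $\beta$, la condition (B) s'écrivant $\alpha=\beta$. Puisque $r=6$ on a $\gamma=3$, et la formule $\Delta=8-2(\alpha+\beta)-2\gamma-(\alpha l+\beta m)$ donne $\Delta=2-2(\alpha+\beta)-E$; ainsi (A) équivaut à $E=E_{A}:=2-2(\alpha+\beta)$. La formule générale $C(s_{1},s_{2}^{s_{3}})=\alpha+\beta\gamma+\alpha l+\beta m$ déjà employée fournit ici $C(s_{1},s_{2}^{s_{3}})=\alpha+3\beta+E$.

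Pour (D) je calculerais $C(s_{2},s_{3}^{s_{1}s_{3}})$ à l'aide de la racine de la réflexion conjuguée $s_{3}^{s_{1}s_{3}}=(s_{1}s_{3})^{-1}s_{3}(s_{1}s_{3})$, à savoir $s_{3}s_{1}(a_{3})$, combinaison de $a_{1}$ et $a_{3}$ dont le coefficient de $a_{3}$ vaut $\beta-1$; on obtient $C(s_{2},s_{3}^{s_{1}s_{3}})=\alpha\beta+(\beta-1)E+3(\beta-1)^{2}$, et comme $\beta$ est racine de $v_{8}$ on a $(\beta-1)^{2}=2\beta-1$, d'où une expression affine en $E$. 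La condition (D), $(s_{2}s_{3}^{s_{1}s_{3}})^{3}=1$, n'autorise que les ordres $1$ et $3$; l'ordre $1$ étant exclu (réflexions distinctes), elle équivaut à $C(s_{2},s_{3}^{s_{1}s_{3}})=1$, c'est-à-dire à $E=E_{D}:=(4-\alpha\beta-6\beta)/(\beta-1)$.

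La traduction de (C) demande plus de soin, et c'est là le principal obstacle. La relation $(s_{1}s_{2}^{s_{3}})^{4}=1$ autorise a priori les ordres $1$, $2$ et $4$; l'ordre $1$ est exclu, et l'ordre $2$ signifierait que $s_{1}$ et $s_{2}^{s_{3}}$ commutent, donc que leurs racines sont mutuellement orthogonales. En écrivant $C(s_{1},s_{2}^{s_{3}})$ comme produit des deux accouplements de $a_{1}$ et de la racine $s_{3}(a_{2})$ de $s_{2}^{s_{3}}$, cette orthogonalité force l'annulation simultanée des deux facteurs, d'où $\alpha=\beta\gamma=3\beta$; or aucun couple de racines de $v_{8}$ ne vérifie $\alpha=3\beta$. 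L'ordre $2$ est donc impossible et (C) équivaut à $C(s_{1},s_{2}^{s_{3}})=2$, soit $E=E_{C}:=2-\alpha-3\beta$.

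Il reste à comparer les trois valeurs $E_{A}$, $E_{C}$, $E_{D}$. Un calcul direct montre que $E_{A}=E_{C}\Leftrightarrow\alpha=\beta$ et $E_{C}=E_{D}\Leftrightarrow\alpha=\beta$ (immédiats), tandis que $E_{A}=E_{D}$ se ramène à $\alpha\beta=2(\alpha+\beta)-2$, qui pour $\alpha=\beta$ n'est autre que $v_{8}(\alpha)=0$ et échoue pour $\alpha\neq\beta$ (alors $\alpha\beta=2$, $\alpha+\beta=4$); de plus ces trois valeurs coïncident lorsque $\alpha=\beta$. La conclusion est alors formelle. Si deux des conditions (A), (C), (D) sont vérifiées, deux de ces valeurs sont égales, donc $\alpha=\beta$, c'est-à-dire (B); les trois valeurs coïncidant alors, la troisième condition parmi (A), (C), (D) s'ensuit, et l'on a bien obtenu les deux autres conditions. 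Si (B) est jointe à l'une des trois, l'égalité $\alpha=\beta$ rend les valeurs égales et la condition donnée fixe $E$ à leur valeur commune, d'où les deux autres. Dans tous les cas deux conditions entraînent les deux autres, la seule difficulté réelle étant l'exclusion de l'ordre $2$ ci-dessus et la vérification de $E_{A}=E_{D}\Leftrightarrow\alpha=\beta$, toutes deux reposant uniquement sur l'arithmétique des racines de $v_{8}$.
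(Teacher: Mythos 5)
Your proof is correct, and its computational core is the same as the paper's: the formulas $C(s_{1},s_{2}^{s_{3}})=\alpha+3\beta+E$ and $C(s_{2},s_{3}^{s_{1}s_{3}})=\alpha\beta+3(\beta-1)^{2}+(\beta-1)E$ with $E=\alpha l+\beta m$, the exclusion of order $1$ (independence of the basis) and of order $2$ for $s_{1}s_{2}^{s_{3}}$ (simultaneous vanishing of the two pairings gives $\alpha=\beta\gamma=3\beta$, impossible for roots of $v_{8}$ --- the paper phrases this as $l+1=\alpha+\beta m=0$, which is the same computation), and the arithmetic $(\beta-1)^{2}=2\beta-1$. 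Where you genuinely differ is the logical organization. The paper runs through all six pairs of hypotheses --- (A,B), (A,C), (A,D), (B,C), (B,D), (C,D) --- each treated by a separate substitution and simplification; you instead turn (A), (C), (D) into the statements $E=E_{A}$, $E=E_{C}$, $E=E_{D}$ for three explicit values, prove the single lemma that any two of these values are equal iff $\alpha=\beta$ (and that all three then coincide), and obtain every implication formally from that lemma. This buys economy and transparency: the paper's six cases become redundant instances of one compatibility statement, and the role of condition (B) as the pivot is made structural rather than emerging case by case. Your checks $E_{A}=E_{C}\Leftrightarrow\alpha=\beta$, $E_{C}=E_{D}\Leftrightarrow\alpha=\beta$, and $E_{A}=E_{D}\Leftrightarrow\alpha\beta=2(\alpha+\beta)-2\Leftrightarrow\alpha=\beta$ are all verified by the relations $2\beta^{2}=8\beta-4$ and $3\beta^{2}=12\beta-6$ and the fact that distinct roots of $v_{8}$ satisfy $\alpha\beta=2$, $\alpha+\beta=4$; they hold. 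The only point where you are terser than a complete write-up requires is the derivation of the formula for $C(s_{2},s_{3}^{s_{1}s_{3}})$, which you justify only by naming the root $s_{3}s_{1}(a_{3})$ with $a_{3}$-coefficient $\beta-1$; the coroot side of that pairing should also be computed, but the resulting formula is exactly the one the paper uses, so nothing fails.
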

\begin{proof}
1) Remarquons d'abord que $s_{1}s_{2}^{s_{3}}=1$ et $s_{2}s_{3}^{s_{1}s_{3}}=1$ ne sont pas possibles car $\mathcal{A}$ est une base de $M$. Ensuite $s_{1}s_{2}^{s_{3}}$ ne peut pas être d'ordre $2$, en effet comme $C(s_{1},s_{2}^{s_{3}})=(l+1)(\alpha+\beta m)$ on a $s_{1}s_{2}^{s_{3}}$ d'ordre $2$ si et seulement si $l+1=\alpha+\beta m=0$ donc $l=-1$, $m=-3$ car $lm=\gamma=3$ et $0=\alpha-3\beta$ ce qui n'est pas. Il en résulte que dans 3. $s_{1}s_{2}^{s_{3}}$ est d'ordre $4$.\\
2) Supposons les conditions (A) et (B) satisfaites. Alors $\Delta=8-4\alpha-6-(\alpha l + \beta m)=0$ $\iff \alpha l+\beta m=2-4\alpha$. Dans ces conditions $C(s_{1},s_{2}^{s_{3}})=4\alpha+2-4\alpha=2$: $s_{1}s_{2}^{s_{3}}$ est d'ordre $4$ et $C(s_{2},s_{3}^{s_{1}s_{3}})=\alpha^{2}+3(\alpha-1)^{2}+(\alpha-1)(2-4\alpha)=1$: $s_{2}s_{3}^{s_{1}s_{3}}$ est d'ordre $3$. Les conditions (C) et (D) sont satisfaites.\\
3) Supposons les conditions (A) et (C) satisfaites. D'après le 1) $s_{1}s_{2}^{s_{3}}$ est d'ordre $4$, donc $\alpha+3\beta+(\alpha l+\beta m)=2=\alpha +3\beta+2-4\alpha$ d'où $\beta=\alpha$: la condition (B) est satisfaite, donc aussi la condition (D).\\
4) Supposons les conditions (A) et (D) satisfaites. D'après le 1) $s_{2}s_{3}^{s_{1}s_{3}}$ est d'ordre $4$, donc $2=\alpha\beta+3(\beta-1)^{2}+(\beta-1)(\alpha l+\beta m)$. Ici $\Delta=0=8-2\alpha-2\beta-6-(\alpha l+\beta m)$ et $(\alpha l+\beta m)=2-2\alpha-2\beta$. Nous avons $C(s_{2},s_{3}^{s_{1}s_{3}})=\alpha\beta+3(2\beta-1)+(\beta-1)(2-2\alpha-2\beta)$.  Nous en déduisons, après simplifications, $2=-\alpha\beta+2\alpha+2\beta$. On ne peut pas avoir $\beta \neq \alpha$ car alors $\alpha\beta=2$, $\alpha+\beta=4$ et on aboutit à une impossibilité; donc $\beta=\alpha$ et la condition (B) est satisfaite. Il en est de même de la condition (C).\\
5) Supposons les conditions (B) et (C) satisfaites. Nous avons $C(s_{1},s_{2}^{s_{3}})=2=4\alpha+(\alpha l+\beta m)$ donc $\alpha l+\beta m=2-4\alpha$, ce qui implique que $\Delta=0$ . C'est la condition (A). Dans ces conditions, la condition (D) est aussi satisfaite.\\
6) Supposons les conditions (B) et (D) satisfaites. Nous avons $C(s_{2},s_{3}^{s_{1}s_{3}})=1=\alpha^{2}+3(\alpha-1)^{2}+(\alpha-1)(\alpha l+\beta m)$ d'où, après simplifications: $0=10\alpha-6+(\alpha-1)(\alpha l+\beta m)$. Comme $(\alpha-1)(\alpha-3)=1$, nous obtenons $0=(\alpha-3)(10\alpha-6)+(\alpha l+\beta m)=4\alpha-2+(\alpha l +\beta m)$ et la condition (A) est satisfaite, d'où aussi la condition (C).\\
7) Supposons les conditions (C) et (D) satisfaites. On a $C(s_{1},s_{2}^{s_{3}})=2=\alpha+3\beta+(\alpha l+\beta m)$ et $C(s_{2},s_{3}^{s_{1}s_{3}})=1=\alpha\beta+3(2\beta-1)+(\beta-1)(\alpha l+\beta m)$. De la première équation on tire $\alpha l+\beta m=2-\alpha-3\beta$ puis 
\begin{align*}
1 & = & \alpha\beta+3(2\beta-1)+(\beta-1)(2-\alpha-3\beta)\\
   & = & \alpha\beta+6\beta-3+2\beta-2-\alpha\beta+\alpha-3\beta^{2}+3\beta
\end{align*}
d'où $6=11\beta-3\beta^{2}+\alpha=11\beta-12\beta+6+\alpha=6+\alpha-\beta$ donc $\beta=\alpha$ et la condition (B) est satisfaite. Il en est de même de la condition (A).
\end{proof}
\begin{proposition}
Soient le groupe $W(3,4,8)$ et $R(\alpha,\beta,\gamma ;l)$ une représentation de réflexion de ce groupe. On a $\alpha=1$, $\beta=2$ et $\gamma$ racine de $v_{8}(X)$. Les deux conditions suivantes sont équivalentes:
\begin{itemize}
  \item (A) \; $\Delta=0$;
  \item (B) \; $(s_{1}s_{2}^{s_{3}s_{2}})^{3}=1$.
\end{itemize}
\end{proposition}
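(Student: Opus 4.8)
Le plan est de traduire la condition (B) en une relation entre les paramètres $\alpha,\beta,\gamma,l,m$, puis de la confronter à $\Delta=0$. Comme $\mathcal{A}$ est une base de $M$, les réflexions $s_1$ et $s_2^{s_3s_2}$ sont distinctes, de sorte que $(s_1s_2^{s_3s_2})^3=1$ équivaut exactement à $C(s_1,s_2^{s_3s_2})=1$. Je déterminerais d'abord la racine de la réflexion conjuguée $s_2^{s_3s_2}$: en utilisant $s_3(a_2)=a_2+ma_3$, $s_2(a_3)=a_3+la_2$ et $lm=\gamma$, on obtient
\[
(s_3s_2)^{-1}(a_2)=(s_2s_3)(a_2)=(\gamma-1)a_2+ma_3 .
\]

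Le calcul de $C(s_1,s_2^{s_3s_2})$ se mène alors comme celui qui, à la proposition 3, fournit $C(s_1,s_3^{s_2})=\alpha\gamma+\beta+(\alpha l+\beta m)$: il repose sur la même identité portant sur les coefficients de la matrice de Cartan, celle qui produit le terme $\alpha l+\beta m$. En développant à partir de la racine ci-dessus, j'obtiendrais
\[
C(s_1,s_2^{s_3s_2})=\alpha(\gamma-1)^2+(\gamma-1)(\alpha l+\beta m)+\beta\gamma .
\]
Or pour $W(3,4,8)$ on a $\alpha=1$, $\beta=2$ et $\gamma$ racine de $v_8(X)=X^2-4X+2$, d'où $\gamma^2-4\gamma+2=0$; de plus $\Delta=8-2\alpha-2\beta-2\gamma-(\alpha l+\beta m)=2-2\gamma-(l+2m)$, soit $l+2m=2-2\gamma-\Delta$. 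En substituant cette valeur et en réduisant grâce à $\gamma^2-4\gamma+2=0$, la partie sans $\Delta$ se ramène à $-\gamma^2+4\gamma-1=1$, ce qui donne la forme compacte
\[
C(s_1,s_2^{s_3s_2})=1-(\gamma-1)\Delta .
\]
Comme $\gamma=2\pm\sqrt2\neq1$, on lit aussitôt l'équivalence $C(s_1,s_2^{s_3s_2})=1\Longleftrightarrow\Delta=0$, c'est-à-dire (B) $\Longleftrightarrow$ (A).

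Le seul point réellement technique est l'établissement de la formule pour $C(s_1,s_2^{s_3s_2})$: ici la conjugaison se fait par l'élément $s_3s_2$ de longueur $2$, et non par une involution comme aux propositions 3 et 17, de sorte qu'il faut suivre soigneusement l'action de $s_2s_3$ sur la racine et la coracine de $a_2$ avant de contracter avec $a_1$. Une fois cette formule obtenue, le reste n'est qu'une substitution, toute la simplification finale reposant sur l'unique relation $v_8(\gamma)=0$.
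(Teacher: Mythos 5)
Your proof is correct and takes essentially the same approach as the paper: both reduce (B) to $C(s_{1},s_{2}^{s_{3}s_{2}})=1$ (after excluding $s_{1}=s_{2}^{s_{3}s_{2}}$) via the identical formula $C(s_{1},s_{2}^{s_{3}s_{2}})=\beta\gamma+\alpha(\gamma-1)^{2}+(\gamma-1)(\alpha l+\beta m)$, and then simplify using $v_{8}(\gamma)=0$. The only difference is presentational: you merge the two implications into the single identity $C(s_{1},s_{2}^{s_{3}s_{2}})=1-(\gamma-1)\Delta$ together with $\gamma\neq 1$, whereas the paper proves (A)$\Rightarrow$(B) by substituting $l+2m=2-2\gamma$ and (B)$\Rightarrow$(A) by inverting $\gamma-1$ through the relation $(\gamma-1)(\gamma-3)=1$.
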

\begin{proof}
1) La condition (A) implique la condition (B). On a $\Delta=8-2-4-2\gamma-(l+2m)$ donc $\Delta=0\iff l+2m=2-2\gamma$. Ensuite $C(s_{1},s_{2}^{s_{3}s_{2}})=\beta\gamma+\alpha(\gamma-1)^{2}+(\gamma-1)(\alpha l+\beta m)$ donc $C(s_{1},s_{2}^{s_{3}s_{2}})=2\gamma+2\gamma-1+2\gamma-2-2\gamma^{2}+2\gamma=-2\gamma^{2}+8\gamma-3=1$ et $s_{1}s_{2}^{s_{3}s_{2}}$ est d'ordre $3$.\\
2) La condition (B) implique la condition (A). Comme dans les propositions précédentes on voit que $(s_{1}s_{2}^{s_{3}s_{2}})^{3}$ est d'ordre $3$, donc $C(s_{1},s_{2}^{s_{3}s_{2}})=1=2\gamma+2\gamma-1+(\gamma-1)(l+2m)$ et $2-4\gamma=(\gamma-1)(l+2m)$. On obtient $(2-4\gamma)(\gamma-3)=l+2m=-4\gamma^{2}+14\gamma-6=-2+2\gamma$ et d'après le 1) $\Delta=0$: la condition (A) est satisfaite.
\end{proof}
\begin{proposition}
Soient le groupe $W(4,6,8)$ et $R(\alpha,\beta,\gamma ;l)$ une représentation de réflexion de ce groupe. On a $\alpha=2$, $\beta=3$ et $\gamma$ racine de $v_{8}(X)$. Les deux conditions suivantes sont équivalentes:
\begin{itemize}
  \item (A) \; $\Delta=0$;
  \item (B) \; $(s_{1}s_{3}^{s_{2}})^{3}=1$.
\end{itemize}
\end{proposition}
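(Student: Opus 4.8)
The plan is to imitate, almost verbatim, the proof just given for $W(3,4,8)$ in the preceding proposition; the only difference is that the conjugating element is here the single reflection $s_2$ instead of $s_3 s_2$, so every quantity stays affine in $\gamma$ and the relation $\gamma^2 = 4\gamma - 2$ is never invoked. First I would fix the two standard inputs: from $p=4$, $q=6$, $r=8$ one reads off $\alpha = 4\cos^2\frac{\pi}{4} = 2$, $\beta = 4\cos^2\frac{\pi}{6} = 3$ and $\gamma$ a root of $v_8(X) = X^2 - 4X + 2$, exactly the values in the statement. Writing the discriminant in its usual form and specializing,
\[
\Delta = 8 - 2\alpha - 2\beta - 2\gamma - (\alpha l + \beta m) = -2 - 2\gamma - (\alpha l + \beta m),
\]
so condition (A) becomes the single linear relation $\alpha l + \beta m = -2 - 2\gamma$.

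Next I would bring in the Cartan coefficient controlling condition (B). By the formula for $C(s_1, s_3^{s_2})$ already recorded (with arbitrary parameters) in Proposition 3,
\[
C(s_1, s_3^{s_2}) = \alpha\gamma + \beta + (\alpha l + \beta m) = 2\gamma + 3 + (\alpha l + \beta m),
\]
and $s_1 s_3^{s_2}$ has order $3$ exactly when $C(s_1, s_3^{s_2}) = 4\cos^2\frac{\pi}{3} = 1$. The equivalence then drops out by substitution. For (A) $\Rightarrow$ (B): plugging $\alpha l + \beta m = -2 - 2\gamma$ into the last display gives $C(s_1, s_3^{s_2}) = 2\gamma + 3 - 2 - 2\gamma = 1$, hence $(s_1 s_3^{s_2})^3 = 1$. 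For (B) $\Rightarrow$ (A): the relation $(s_1 s_3^{s_2})^3 = 1$ forces $s_1 s_3^{s_2}$ to have order dividing $3$, and the order is not $1$ because $s_1$ and $s_3^{s_2} = s_2 s_3 s_2$ are the reflections along the linearly independent vectors $a_1$ and $s_2(a_3) = a_3 + l a_2$ (recall $\mathcal{A}$ is a basis of $M$); so the order is exactly $3$, whence $C(s_1, s_3^{s_2}) = 1$, which rearranges to $\alpha l + \beta m = -2 - 2\gamma$, i.e. $\Delta = 0$.

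I expect no genuine obstacle: once the two formulas for $\Delta$ and for $C(s_1, s_3^{s_2})$ are on the table the whole argument is a one-line linear substitution, and unlike the $W(3,4,8)$ case no use of $\gamma^2 = 4\gamma - 2$ is required. The single point deserving a word is the passage from $(s_1 s_3^{s_2})^3 = 1$ to \emph{order exactly} $3$ in the reverse implication; this is the same nontriviality remark ("$\mathcal{A}$ est une base de $M$") used in the companion propositions, and here it is immediate since $a_1$ is not proportional to $a_3 + l a_2$.
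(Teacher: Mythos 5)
Your proof is correct and is exactly the argument the paper intends: its own ``proof'' of this proposition is just \emph{Elle est semblable à la précédente}, and what you wrote is that preceding argument (compute $C(s_1,s_3^{s_2})=\alpha\gamma+\beta+(\alpha l+\beta m)$, match it against $\Delta=0$, and use $s_1\neq s_3^{s_2}$ to upgrade ``order dividing $3$'' to ``order exactly $3$'') transplanted to $\alpha=2$, $\beta=3$. Your side remark is also accurate: since the conjugation here is by the single reflection $s_2$, the Cartan coefficient stays linear in $\gamma$, so unlike the $W(3,4,8)$ case the relation $v_8(\gamma)=0$ is never needed.
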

\begin{proof}
Elle est semblable à la précédente.
\end{proof}
\begin{proposition}
Il existe un groupe $G$ tel que $D_{i}^{\epsilon}$ soit isomorphe à $G$ pour $1\leqslant i\leqslant 4$ et $\epsilon\in \{+,-\}$.
\end{proposition}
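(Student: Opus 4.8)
The plan is to fix one of the eight groups, say $G:=D_{1}^{+}$, and to prove that each $D_{i}^{\epsilon}$ is isomorphic to it. Following the pattern of Propositions 11 and 12, I would separate the argument into two independent steps: for a fixed sign $\epsilon$, link the four triples $(8,8,3)$, $(8,8,6)$, $(3,4,8)$ and $(4,6,8)$ by changing the generating set of reflections (the fundamental construction); then pass from $\epsilon=+$ to $\epsilon=-$ by a Galois-semilinear conjugation. Proposition 16 already records a unifying feature across the four families, namely $t'^{3}=i\,\mathrm{id}_{M'}$ with $t=s_{1}s_{2}s_{3}$, which is reassuring: the same central scalar is produced in every case, so the four quotients $G'$ are at least strong candidates to coincide.

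For the triple-linking step, begin with $D_{1}^{\epsilon}$ of type $(8,8,3)$. Remarque 3 supplies the change of generators $z_{2}=(s_{1}s_{3})^{4}$; because $D_{1}^{\epsilon}$ satisfies $\alpha+\beta=4$, the transformed parameter $4-\beta$ coincides with $\alpha$, so the new group is of type $(8,8,6)$ with equal parameters and its parameter system is exactly $\mathcal{P}(D_{2}^{\epsilon})$, giving $D_{1}^{\epsilon}\simeq D_{2}^{\epsilon}$. To reach $D_{3}^{\epsilon}$, keep $s_{2},s_{3}$ and replace $s_{1}$ by its conjugate $s_{1}^{s_{3}}$: by Proposition 17 the product $s_{2}s_{1}^{s_{3}}$ has order $4$, while $s_{2}s_{3}$ keeps order $3$ and $s_{1}^{s_{3}}s_{3}=s_{3}s_{1}$ keeps order $8$, so $\langle s_{1}^{s_{3}},s_{2},s_{3}\rangle=G$ is a quotient of $W(3,4,8)$ whose parameter system one checks to be $\mathcal{P}(D_{3}^{\epsilon})$. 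Symmetrically, from $D_{2}^{\epsilon}$ of type $(8,8,6)$, replace $s_{2}$ by $s_{2}^{s_{3}}$: by Proposition 18 the product $s_{1}s_{2}^{s_{3}}$ has order $4$, while $s_{2}^{s_{3}}s_{3}=s_{3}s_{2}$ keeps order $6$ and $s_{1}s_{3}$ keeps order $8$, realizing $G$ as a quotient of $W(4,6,8)$ with parameter system $\mathcal{P}(D_{4}^{\epsilon})$. This yields the chain $D_{3}^{\epsilon}\simeq D_{1}^{\epsilon}\simeq D_{2}^{\epsilon}\simeq D_{4}^{\epsilon}$ for each fixed $\epsilon$, and Propositions 19 and 20 confirm that the $(3,4,8)$- and $(4,6,8)$-targets are precisely the groups cut out by $\Delta=0$.

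For the sign-flip step, note that $K=\mathbb{Q}(\sqrt{2},i)$ is already Galois over $\mathbb{Q}$, so the automorphism $\theta$ with $\theta(\sqrt{2})=-\sqrt{2}$ and $\theta(i)=i$ belongs to $\mathcal{G}(K/\mathbb{Q})$; in contrast with Proposition 11, no passage to a larger splitting field $\tilde{K}$ is needed. Applying $\theta$ entrywise to the matrices of $s_{1},s_{2},s_{3}$ in $D_{1}^{+}$ transforms $\mathcal{P}(D_{1}^{+})$ into $\mathcal{P}(D_{1}^{-})$, since $\theta$ sends $2+\sqrt{2}$ to $2-\sqrt{2}$ and fixes the entries $-1+i$ and $-1-i$. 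Defining the $\theta$-semilinear map $\sigma$ by $\sigma(a_{k})=b_{k}$ on the two bases and verifying $\sigma s_{i}\sigma^{-1}=t_{i}$ for $1\leqslant i\leqslant 3$ then gives $D_{1}^{-}\simeq D_{1}^{+}$; composing with the chain above shows that all eight groups are isomorphic to $G$.

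The main obstacle will be the careful part of the triple-linking, namely verifying that the conjugated generators reproduce the \emph{entire} parameter system $\mathcal{P}(D_{3}^{\epsilon})$ and $\mathcal{P}(D_{4}^{\epsilon})$---including the two entries $\alpha l$ and $\beta m$---and not merely the three pairwise orders, together with the generation identities $\langle s_{1}^{s_{3}},s_{2},s_{3}\rangle=\langle s_{1},s_{2},s_{3}\rangle$ and $\langle s_{1},s_{2}^{s_{3}},s_{3}\rangle=\langle s_{1},s_{2},s_{3}\rangle$. The order computations themselves are immediate from Propositions 17 and 18, the $D_{1}\leftrightarrow D_{2}$ link is a direct application of Remarque 3, and the sign flip is routine once $\theta$ is seen to lie in $\mathcal{G}(K/\mathbb{Q})$.
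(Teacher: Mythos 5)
Your overall strategy coincides with the paper's: link the four triples by replacing generators with suitable conjugates (Remarque 3 for $(8,8,3)\leftrightarrow(8,8,6)$, a conjugate generator for the links to $(3,4,8)$ and $(4,6,8)$, with Propositions 17--20 supplying the order computations), then identify the two signs. The proof does go through, but one intermediate claim is wrong as stated. From $D_{1}^{\epsilon}$, the new triple $\langle s_{1}^{s_{3}},s_{2},s_{3}\rangle$ does \emph{not} have parameter system $\mathcal{P}(D_{3}^{\epsilon})$ but $\mathcal{P}(D_{3}^{-\epsilon})$: the order-$8$ pair of the new generating set is $(s_{1}^{s_{3}},s_{3})$, whose parameter is $C(s_{1}^{s_{3}},s_{3})=C(s_{1},s_{3})=\beta=2-\epsilon\sqrt{2}$, whereas $D_{3}^{\epsilon}$ has $\gamma=2+\epsilon\sqrt{2}$. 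This is exactly what the paper's own computation records: starting from $D_{1}^{+}$ it finds $\mathcal{P}(1,2,2-\sqrt{2};-\frac{\alpha+\beta m}{m},-\frac{2\beta m}{\alpha+\beta m})$, i.e. $D_{1}^{+}\simeq D_{3}^{-}$. So the check you defer to ("one checks the parameter system to be $\mathcal{P}(D_{3}^{\epsilon})$") would fail; your conclusion survives only because your separate sign-flip step gives $D_{1}^{+}\simeq D_{1}^{-}$, so that both $D_{3}^{+}$ and $D_{3}^{-}$ are still reached through the corrected chain. The paper turns this sign-crossing into the very mechanism of the proof: it deduces $D_{3}^{+}\simeq D_{3}^{-}$ precisely from $D_{1}^{+}\simeq D_{3}^{-}$ combined with the already-known $D_{1}^{+}\simeq D_{1}^{-}$.

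Beyond that slip, two genuine differences are worth noting. First, your sign flip is a $\theta$-semilinear conjugation with $\theta\in\mathcal{G}(K/\mathbb{Q})$, $\theta(\sqrt{2})=-\sqrt{2}$, $\theta(i)=i$; this is valid, and your observation that no splitting field $\tilde{K}$ is needed (unlike Proposition 11, since here $\theta$ already maps $\mathcal{P}(D_{i}^{+})$ to $\mathcal{P}(D_{i}^{-})$ inside the Galois extension $K/\mathbb{Q}$) is correct and makes the argument uniform with Propositions 11--12. The paper, by contrast, uses no Galois theory at all in this proof: the $\pm$ identifications for $i\in\{1,2\}$ are quoted as already established (they follow from the several generating forms $\Gamma_{2}$, $G_{2}'$ of Remarque 3), and the rest comes from the sign-crossing link above. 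Second, you reach $D_{4}^{\epsilon}$ directly from $D_{2}^{\epsilon}$ via $s_{2}\mapsto s_{2}^{s_{3}}$ and Proposition 18 (C), while the paper reaches it from $D_{3}^{\epsilon}$ via $s_{2}\mapsto s_{2}(s_{2}s_{3})^{4}$, $s_{3}\mapsto s_{3}(s_{2}s_{3})^{4}$; both are correct, and your route has the small advantage that the signs match without further argument. Finally, you rightly flag that matching the three pairwise parameters is not yet a full identification: the entries $\alpha l$, $\beta m$ must be computed, which is exactly the "calcul simple" the paper performs and which is what pins down the sign you got wrong.
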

\begin{proof}
On a déjà vu le résultat lorsque $i\in\{1,2\}$ et $\epsilon\in \{+,-\}$.\\
Partant de $D_{1}^{+}$ nous avons $<s_{3},s_{1}^{s_{3}}>=<s_{1},s_{3}>$ et $C(s_{2},s_{1}^{s_{3}})=\alpha+\beta\gamma+(\alpha l+\beta m)=4-2=2$ donc $s_{2}s_{1}^{s_{3}}$ est d'ordre $4$ et nous avons $D_{1}^{+}=<s_{1}^{s_{3}},s_{2},s_{3}>$ avec le système de paramètres $\mathcal{P}(1,2,2-\sqrt{2};-\frac{\alpha+\beta m}{m},-\frac{2\beta m}{\alpha+\beta m})$ par un calcul simple. Donc $D_{1}^{+}\simeq D_{3}^{-}$ et comme $D_{1}^{+}\simeq D_{1}^{-}$ nous obtenons $D_{3}^{+}\simeq D_{3}^{-}$.\\
Partant de $D_{3}^{\epsilon}$ nous avons $C(s_{1},s_{2}(s_{2}s_{3})^{4})=4-\alpha=3$ et $C(s_{1},s_{3}(s_{2}s_{3})^{4})=4-\beta=2$ et nous obtenons $D_{3}^{\epsilon}\simeq D_{4}^{\epsilon}$, d'où le résultat.
\end{proof}
\begin{corollary}
Des ``présentations'' de $G$ sont:
\begin{itemize}
\item $(w(8,8,3),(s_{2}s_{3}^{s_{1}})^{3}=1,(s_{2}s_{1}^{s_{3}})^{4}=1)$;\\
\item $(w(8,8,6),(s_{1}s_{2}^{s_{3}})^{4}=1,(s_{2}s_{3}^{s_{1}s_{3}})^{3}=1)$;\\
 \item $(w(3,4,8),(s_{1}s_{2}^{s_{3}s_{2}})^{3}=1)$;\\
\item $(w(4,6,8),(s_{1}s_{3}^{s_{2}})^{3})=1)$.
\end{itemize}
\end{corollary}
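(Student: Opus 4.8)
The plan is to obtain each of the four lines directly from the corresponding structural proposition on $W(8,8,3)$, $W(8,8,6)$, $W(3,4,8)$ and $W(4,6,8)$, combined with the preceding proposition identifying all the $D_{i}^{\epsilon}$ with a single group $G$; no new computation is required, exactly as for the earlier corollary on $C$. The point to make in each case is that the adjoined relations are nothing but conditions $(C)$ and $(D)$ (or the single condition $(B)$) of the relevant proposition, and that these are equivalent to $(A)$ $\Delta=0$ together with the parameter constraint that pins the system down to $\mathcal{P}(D_{i}^{\epsilon})$, which lives over $\mathbb{Q}(\sqrt{2},i)$.

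First I would treat $(8,8,3)$. A reflection representation of $W(8,8,3)$ automatically has $\alpha,\beta$ roots of $v_{8}(X)$ and $\gamma=1$, so the only freedom is in the remaining parameters. By the proposition on $W(8,8,3)$, the two relations $(s_{2}s_{3}^{s_{1}})^{3}=1$ and $(s_{2}s_{1}^{s_{3}})^{4}=1$, namely conditions $(C)$ and $(D)$, together force $(A)$ $\Delta=0$ and $(B)$ $\beta\neq\alpha$; conversely $D_{1}^{\epsilon}$ satisfies $(C)$ and $(D)$ because it satisfies $(A)$ and $(B)$ by construction. Hence adjoining these two relations to $w(8,8,3)$ is equivalent to imposing the defining conditions of $D_{1}^{\epsilon}$, and by the isomorphism proposition the resulting group is $G$. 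The triple $(8,8,6)$ is identical, now invoking its proposition: the relations $(s_{1}s_{2}^{s_{3}})^{4}=1$ and $(s_{2}s_{3}^{s_{1}s_{3}})^{3}=1$ are conditions $(C)$ and $(D)$, which together give $(A)$ $\Delta=0$ and $(B)$ $\beta=\alpha$, the defining conditions of $D_{2}^{\epsilon}$. For $(3,4,8)$ and $(4,6,8)$ the parameters are already rigid ($\alpha=1,\beta=2$, resp. $\alpha=2,\beta=3$, with $\gamma$ a root of $v_{8}(X)$), so only the single relation $(B)$ is needed; by the corresponding propositions it is equivalent to $(A)$ $\Delta=0$, yielding $\mathcal{P}(D_{3}^{\epsilon})$, resp. $\mathcal{P}(D_{4}^{\epsilon})$.

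In all four cases the preceding proposition gives $D_{i}^{\epsilon}\simeq G$, so the four presentations describe the same group $G$, as claimed. The argument is thus a straight translation of the equivalences already proved, and the only point I would state explicitly is the asymmetry between the triples: for $(8,8,3)$ and $(8,8,6)$ one genuinely needs \emph{both} adjoined relations, since the relevant proposition only guarantees $\Delta=0$ from a \emph{pair} of the four conditions, whereas for $(3,4,8)$ and $(4,6,8)$ the rigidity of $\alpha$ and $\beta$ makes the lone relation $(B)$ equivalent to $\Delta=0$ by itself. There is no real obstacle here; the work has already been done in the four structural propositions.
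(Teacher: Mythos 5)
Your proposal is correct and is exactly the paper's argument: the paper's own proof is the one-line ``C'est clair d'après les propositions précédentes,'' and your write-up simply makes explicit how conditions (C)/(D) (resp.\ the single condition (B)) of Propositions 16--19 force $\Delta=0$ together with the parameter constraints defining the $D_{i}^{\epsilon}$, after which Proposition 20 identifies all of them with $G$. Your closing remark on the asymmetry --- that the $(8,8,3)$ and $(8,8,6)$ cases need two adjoined relations to also pin down $\beta\neq\alpha$ (resp.\ $\beta=\alpha$), while the rigidity of $\alpha,\beta$ in the $(3,4,8)$ and $(4,6,8)$ cases makes one relation suffice --- is a correct and worthwhile clarification of what the paper leaves implicit.
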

\begin{proof}
C'est clair d'après les propositions précédentes.
\end{proof}
\subsubsection{Le groupe $G_{13}$}
\begin{theorem}
Le groupe $G'=G/N$ est isomorphe au groupe de réflexion complexe $G_{13}$.
\end{theorem}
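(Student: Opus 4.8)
\noindent\emph{Esquisse du plan de démonstration.} D'après la proposition affirmant que les $D_i^{\epsilon}$ sont isomorphes à $G$, il suffit de raisonner sur l'un d'eux, par exemple $D_1^{+}$, dont une présentation figure au corollaire précédent. Posons $t=s_1s_2s_3$ et notons $t'$ son image dans $G'$. La proposition donnant les systèmes de paramètres des $D_i^{\epsilon}$ fournit $(t')^{3}=i\,\mathrm{id}_{M'}$; l'homothétie $i\,\mathrm{id}_{M'}$ appartient donc à $G'$ et engendre un sous-groupe central $Z:=\langle (t')^{3}\rangle\simeq C_4$ (central car formé d'homothéties), dont l'unique involution est $-\mathrm{id}_{M'}=(t')^{6}$.

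Je commencerais par étudier le quotient $G'':=G'/\langle -\mathrm{id}_{M'}\rangle$. Les images $\bar{s}_i$ des réflexions y restent des involutions, et l'image $\bar{t}'=\bar{s}_1\bar{s}_2\bar{s}_3$ vérifie $(\bar{t}')^{3}=\bar{\imath}$, où $\bar{\imath}$ est d'ordre $2$ (puisque $i^{2}=-\mathrm{id}_{M'}$ devient trivial): ainsi $\bar{t}'$ est d'ordre $6$, qui est le nombre de Coxeter de $B_3$. En calculant, à partir des matrices explicites des $s_i$ et des relations de la présentation de $G$, les ordres des produits $\bar{s}_i\bar{s}_j$, je vérifierais que les $\bar{s}_i$ satisfont les relations de Coxeter de type $B_3$, de sorte que $G''$ est un quotient de $W(B_3)$ dans lequel $\bar{t}'$ est un élément de Coxeter. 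Un calcul direct de l'ordre du groupe matriciel $\langle s_1',s_2',s_3'\rangle$ (ou un comptage) montrerait que ce quotient est $W(B_3)$ tout entier, d'ordre $48$, d'où $|G'|=96$.

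Il resterait à identifier l'extension centrale $1\to\langle -\mathrm{id}_{M'}\rangle\to G'\to W(B_3)\to 1$. Elle est non scindée: si elle l'était, $G'$ serait le produit direct $\langle -\mathrm{id}_{M'}\rangle\times W(B_3)$ (une extension centrale scindée par un groupe d'ordre $2$ est un produit direct), et son centre serait $C_2\times Z(W(B_3))\simeq C_2\times C_2$, d'exposant $2$; or $Z(G')$ contient $(t')^{3}$, d'ordre $4$, ce qui est contradictoire. Le groupe $G'$ est donc une extension centrale \emph{non scindée} de $W(B_3)$ par $C_2$, réalisée comme groupe de réflexion complexe de rang $2$ sur $K=\mathbb{Q}(\sqrt{2},i)$: d'après les rappels du début de cette section, c'est le groupe $G_{13}$. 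De façon équivalente, on peut exhiber directement la structure de produit central en posant $\hat{S}_4:=\langle i\,s_1',\,i\,s_2',\,i\,s_3'\rangle$: comme $(i\,s_j')^{2}=-\mathrm{id}_{M'}$, ces générateurs sont des relèvements d'ordre $4$ des transpositions, $\hat{S}_4$ est un revêtement double non scindé de $S_4$, et $G'=Z\cdot\hat{S}_4$ avec $Z\cap\hat{S}_4=\langle -\mathrm{id}_{M'}\rangle$, d'où $G'\simeq C_4\star\hat{S}_4\simeq G_{13}$.

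Le point délicat est cette dernière identification. L'argument sur l'exposant du centre règle la non-scindabilité, mais il faut encore s'assurer que l'extension obtenue est bien celle qui définit $G_{13}$, et non un autre revêtement de $W(B_3)$: cela revient à contrôler que $\hat{S}_4$ est le bon revêtement double de $S_4$ — celui dans lequel les transpositions se relèvent en éléments d'ordre $4$ — et que $i\notin\hat{S}_4$, c'est-à-dire $|\hat{S}_4|=48$. La vérification soigneuse de ces faits, qui repose sur les calculs matriciels explicites fournis par la représentation de réflexion, constitue l'essentiel du travail.
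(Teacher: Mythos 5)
Votre sch\'ema g\'en\'eral est bien celui du papier: faire appara\^itre l'involution centrale $-\mathrm{id}_{M'}$, montrer que le quotient de $G'$ par cette involution est $W(B_{3})$, \'etablir la non-scindabilit\'e de l'extension centrale, et conclure par la caract\'erisation de $G_{13}$ rappel\'ee en t\^ete de section. Votre argument de non-scindabilit\'e (le centre de $G'$ contient l'homoth\'etie $(t')^{3}$ d'ordre $4$, alors que le centre d'un produit direct $C_{2}\times W(B_{3})$ est d'exposant $2$) est correct et constitue une variante valable de celui du papier, qui observe plut\^ot que $z=(s_{1}'s_{3}')^{2}=(s_{2}'s_{3}')^{4}$ appartient \`a $D(G')$.

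En revanche, une \'etape \'echoue telle quelle: le choix de la forme $D_{1}^{+}$, c'est-\`a-dire du triple $(8,8,3)$. Pour ce groupe, $C(s_{1},s_{2})=2+\sqrt{2}$ et $C(s_{1},s_{3})=2-\sqrt{2}$ sont des racines de $v_{8}(X)$, donc $s_{1}'s_{2}'$ et $s_{1}'s_{3}'$ sont des rotations d'ordre $8$ de $M'$ (leurs puissances quatri\`emes valent l'involution centrale, non triviale), tandis que $s_{2}'s_{3}'$ est d'ordre $3$. Dans $G''=G'/\langle -\mathrm{id}_{M'}\rangle$, les produits $\bar{s}_{i}\bar{s}_{j}$ ont donc pour ordres $(4,4,3)$: aucune paire de g\'en\'erateurs ne commute, et ce ne sont pas les relations de Coxeter de type $B_{3}$, dont les ordres sont $(4,3,2)$. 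Pire, les relations d'ordres $(4,4,3)$ d\'efinissent le groupe de Coxeter $W(4,4,3)$, qui est infini (c'est pr\'ecis\'ement l'un des groupes \'etudi\'es \`a la section 2): votre encha\^inement \og les $\bar{s}_{i}$ satisfont les relations de $B_{3}$, donc $G''$ est un quotient de $W(B_{3})$, donc par comptage $G''\simeq W(B_{3})$ \fg{} ne tient plus. C'est exactement pour cette raison que le papier part de $D_{3}^{+}$, la forme $(3,4,8)$: l\`a les ordres de $s_{1}'s_{2}'$, $s_{1}'s_{3}'$, $s_{2}'s_{3}'$ sont $(3,4,8)$, donc $(3,2,4)$ dans le quotient, c'est-\`a-dire le diagramme $B_{3}$ pour les g\'en\'erateurs naturels. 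Si vous tenez \`a la forme $(8,8,3)$, il faut changer de syst\`eme g\'en\'erateur et prendre $\bar{s}_{1}$, $\bar{s}_{2}^{\bar{s}_{3}}$, $\bar{s}_{3}$, dont les produits deux \`a deux ont pour ordres $(2,4,3)$; c'est la manipulation faite dans le th\'eor\`eme du papier donnant les pr\'esentations de $G_{13}$. Moyennant cette correction, le reste de votre plan, y compris l'identification finale via le produit central $C_{4}\star\hat{S}_{4}$ (conforme aux faits rappel\'es au d\'ebut de la section), aboutit.
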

\begin{proof}
Nous partons du groupe $D_{3}^{+}$ qui est isomorphe à $G$. Dans $G'$ il n'y a qu'un seul élément d'ordre $2$. Comme $(s_{1}'s_{3}')^{2}\in D(<s_{1}',s_{3}'>\subset D(G')$ et $(s_{2}'s_{3}')^{4}\in D(<s_{2}',s_{3}'>\subset D(G')$ , on a $z=(s_{1}'s_{3}')^{2}=(s_{2}'s_{3}')^{4}$ et comme $z$ commute à $s_{1}'$, $s_{2}'$ et $s_{3}'$, $z$ est central dans $G'$ et est d'ordre $2$. maintenant $\bar{G}:=G'/<z>=<s_{1}',s_{2}',s_{3}'>\simeq W(B_{3})$ et on a le résultat car $z$ est dans $G'$.
\end{proof}
\begin{theorem}
Des présentations du groupe $G_{13}$ comme quotient d'un groupe de Coxeter $W(p,q,r)$ sont les suivantes:
\begin{enumerate}
  \item $(w(8,8,3),(s_{1}s_{2}^{s_{3}})^{2}=(s_{1}s_{3})^{4})$;
  \item $(w(8,8,6),(s_{2}s_{1}^{s_{3}s_{1}})^{2}=(s_{2}s_{3})^{3}=(s_{1}s_{3})^{4})$;
  \item $(w(3,4,8),(s_{1}s_{3})^{2}=(s_{2}s_{3})^{4})$;
  \item $(w(4,6,8),(s_{1}s_{2})^{2}=(s_{1}s_{3})^{3}=(s_{2}s_{3})^{4})$.
\end{enumerate}
\end{theorem}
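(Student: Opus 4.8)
The plan is to read each displayed presentation as the affine group $G$ — whose presentations on the four bases $W(8,8,3)$, $W(8,8,6)$, $W(3,4,8)$, $W(4,6,8)$ were given in the preceding corollary — with extra relations that collapse the translation lattice $N$; since $G/N=G'\simeq G_{13}$ by the previous theorem, the resulting group will be $G_{13}$. The mechanism is always the same: in $G'$ there is a unique central involution $z$, and each displayed equality ``$A=B$'' (resp. ``$A=B=C$'') equates two (resp. three) words that in $G'$ all equal $z$. In the affine group these words differ by elements of $N$, so imposing the equalities kills those translations; because $N$ is generated as a normal subgroup of $G$ by a single translation (the ring action on a generator being realised through the group operations, as in the structure results for $N$), killing one translation kills all of $N$.

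I would first treat the two transparent bases $W(3,4,8)$ and $W(4,6,8)$, where the argument gives the order bound directly. Take $H:=\langle w(3,4,8),(s_1s_3)^2=(s_2s_3)^4\rangle$ and set $z:=(s_1s_3)^2=(s_2s_3)^4$. From the Coxeter relation $(s_2s_3)^8=1$ we get $z^2=1$; writing $z$ alternately as $(s_1s_3)^2$ and $(s_2s_3)^4$ and using $z^2=1$ shows $s_1zs_1=s_2zs_2=s_3zs_3=z$, so $z$ is central of order dividing $2$. In $H/\langle z\rangle$ the relations become $(s_1s_3)^2=1$, $(s_2s_3)^4=1$, $(s_1s_2)^3=1$, so $H/\langle z\rangle$ is a quotient of the Coxeter group $W(B_3)$ and $|H/\langle z\rangle|\leqslant 48$; hence $|H|\leqslant 96=|G_{13}|$. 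On the other hand all relations of $H$ hold in $G'\simeq G_{13}$ (the Coxeter relations because $G'$ is a quotient of $W(3,4,8)$, and the equality $(s_1s_3)^2=(s_2s_3)^4$ because both sides equal the central $z$, as established in the proof that $G'\simeq G_{13}$), so $H$ surjects onto $G_{13}$. The two bounds force $H\simeq G_{13}$. The base $W(4,6,8)$ is identical: there $z=(s_1s_2)^2=(s_1s_3)^3=(s_2s_3)^4$ again has order $2$ and is central, and killing $z$ yields a quotient of $W(B_3)$ (orders $2,3,4$), so $|H|\leqslant 96$ and equality holds as before.

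For the bases $W(8,8,3)$ and $W(8,8,6)$ I would argue by transport of structure. The isomorphisms $D_1^\epsilon\simeq D_3^\epsilon$, together with the remaining identifications among the $D_i^\epsilon$ established in the earlier proposition by explicit substitutions of generators (e.g. $D_1^{+}=\langle s_1^{s_3},s_2,s_3\rangle$), descend to isomorphisms of the quotients by $N$, hence to isomorphisms with $G_{13}$. One then checks that, under these substitutions, the displayed relations of cases $1$ and $2$ correspond to the already-verified relations of cases $3$ and $4$ modulo the Coxeter relations; equivalently, one verifies directly that each listed word equality holds in $G'\simeq G_{13}$ (every member reducing to $z$) and that the resulting group has order $96$.

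The main obstacle is precisely this last point for $W(8,8,3)$ and $W(8,8,6)$: there the braid order $m_{12}=8$ survives the passage to $H/\langle z\rangle$, so the clean ``quotient of $W(B_3)$'' order bound of cases $3$ and $4$ is not directly available. One must show that the extra relation involving the conjugate reflection (such as $(s_1s_2^{s_3})^2=z$ in case $1$) forces the order of $s_1s_2$ to drop, i.e. that $H/\langle z\rangle$ is nevertheless a quotient of $W(B_3)$ of order at most $48$. Carrying this out by hand is possible but delicate; transporting the isomorphism already proved for $W(3,4,8)$ and $W(4,6,8)$ through the explicit generator substitutions of the earlier proposition is the safer route, the only genuine work being to match the specific words listed in the statement with the images of the verified relations.
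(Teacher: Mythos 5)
Your cases 3 and 4 are proved correctly, and by exactly the paper's mechanism: a central involution $z$ equal to all the listed words, a quotient $H/\langle z\rangle$ that is visibly a quotient of $W(B_3)$ (hence of order $\leqslant 48$, so $|H|\leqslant 96$), and a surjection $H\to G'\simeq G_{13}$ coming from the previous theorem, which forces $H\simeq G_{13}$. But your proposal leaves cases 1 and 2 genuinely unproven, and what you call ``the main obstacle'' is precisely the point the paper resolves --- indeed the paper writes out its proof for case 1, the $(8,8,3)$ case you defer. The missing idea is a change of generating set, which the statement itself suggests through the word $s_{1}s_{2}^{s_{3}}$: in the presented group $H$ of case 1, put $u_{1}=s_{1}$, $u_{2}=s_{2}^{s_{3}}$, $u_{3}=s_{3}$; these still generate $H$ since $s_{2}=s_{3}u_{2}s_{3}$. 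Setting $z:=(s_{1}s_{2}^{s_{3}})^{2}=(s_{1}s_{3})^{4}$, the Coxeter relation $(s_{1}s_{3})^{8}=1$ gives $z^{2}=1$; the usual dihedral computation (conjugation by $u_{1}$ or $u_{3}$ inverts $z$, and $z=z^{-1}$) shows $z$ commutes with $u_{1},u_{2},u_{3}$, hence is central; and modulo $z$ the new generators satisfy $(u_{1}u_{2})^{2}=1$, $(u_{1}u_{3})^{4}=1$ and $(u_{2}u_{3})^{3}=(s_{3}s_{2})^{3}=1$, which are the $B_{3}$ relations. The surviving relation $(s_{1}s_{2})^{8}=1$ is no obstacle at all: it is just one more relation imposed on a group already exhibited as a quotient of $W(B_3)$ on the generators $u_{i}$, so it can only shrink $H/\langle z\rangle$ further, and the bound $|H|\leqslant 96$ stands; the surjection onto $G_{13}$ then finishes the proof exactly as in your cases 3 and 4. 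Case 2 is handled the same way with the generators $s_{2}$, $s_{1}^{s_{3}s_{1}}$, $s_{3}$ (note that $s_{1}^{s_{3}s_{1}}s_{3}=(s_{1}s_{3})^{3}$ generates $\langle s_{1}s_{3}\rangle$ since $\gcd(3,8)=1$, so these three elements generate, and their pairwise products have orders $2$, $3$, $4$ modulo $z$).

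Your fallback for cases 1 and 2 --- transporting the result from cases 3 and 4 through the isomorphisms $D_{i}^{\epsilon}\simeq D_{j}^{\epsilon'}$ --- is not rigorous as stated: those are isomorphisms between the concrete affine reflection groups, not between abstractly presented groups, and converting them into statements about presentations requires exhibiting words in the case-1 presented group that satisfy the case-3 relations and generate it. Once you carry out that word-level verification you will find you have simply rewritten the change-of-generators argument above, so the detour buys nothing. The gap is real but is closed by an observation already in your hands: the conjugate reflection $s_{2}^{s_{3}}$ (resp. $s_{1}^{s_{3}s_{1}}$) appearing in the listed relations is meant to be taken as a generator.
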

\begin{proof}
Nous ne démontrons que la première présentation car les autres sont semblables.\\
Nous nous pla\c  cons dans $G'=G/N$ et appelons $g'$ l'image de $g$ dans $G'$. Soit $z=(s_{1}'s_{2}'^{s_{3}'})^{2}=(s_{1}'s_{3}')^{4}$. Alors $z^{2}=1$ et $z$ est central dans $G'$. Il est clair que $z$ est dans le groupe dérivé de $G'$. $G'/<z>$ est engendré par les images de $s_{1}'$, $s_{2}'^{s_{3'}}$ et $s_{3}'$ avec les relations $((s_{1}'s_{3}')^{4}=1=(s_{1}'s_{2}'^{s_{3}'})^{2}=(s_{3}'s_{2}'^{s_{3}'})^{3}$ donc $G'/<z>$ est isomorphe à $W(B_{3})$ et $G'$ est isomorphe à $G_{13}$.
\end{proof}
\begin{proposition}
Le groupe $G$ possède les $R$-sous-groupes suivants:
\begin{enumerate}
  \item $W(\tilde{G}_{2})$, groupe de Weyl affine de type $G_{2}$.
  \item Le groupe affine associé à $G_{12}$ (ceci implique que dans $G_{13}$ il y a deux classes de conjugaison de réflexions car $G_{12}$ est d'indice $2$ dans $G_{13}$.)
  \item $W(\tilde{B}_{2})$, groupe de Weyl affine de type $B_{2}$.
  \item $W(8,8,2)$ isomorphe au groupe de réflexion complexe affine associé au groupe diédral $D_{8}$.
  \item $W(4,4,8)$ isomorphe au groupe de réflexion complexe affine associé à $G(8,4,2)$.
\end{enumerate}
\end{proposition}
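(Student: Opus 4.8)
Le principe est, pour chacun des cinq sous-groupes annoncés, d'exhiber un triplet de réflexions de $G$ --- choisies parmi $s_1$, $s_2$, $s_3$, leurs conjuguées $s_i^{w}$ et leurs produits avec les éléments $z_j$ --- dont les produits deux à deux réalisent le diagramme de Coxeter voulu, puis de montrer que le groupe qu'elles engendrent est bien le groupe affine complet et non un quotient fini. Les ordres de ces produits sont entièrement contrôlés par la fonction $C$: pour deux réflexions $s$ et $t$, $st$ est d'ordre $m$ si et seulement si $C(s,t)=4\cos^{2}\frac{\pi}{m}$, de sorte que les valeurs $0$, $1$, $2$, $3$ et $2+\sqrt{2}$ correspondent respectivement aux ordres $2$, $3$, $4$, $6$ et $8$. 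Chaque vérification se ramène alors au calcul de $C$ sur les réflexions choisies, à partir du système de paramètres $\mathcal{P}(G)$ et des relations déjà établies, exactement comme dans les propositions précédentes.

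Pour les trois sous-groupes de rang $2$ on se placerait dans les présentations du corollaire précédent. Pour $W(8,8,2)$ (point 4), le couple $\{s_1,s_2\}$ de la présentation $w(8,8,3)$ vérifie déjà $C(s_1,s_2)=\alpha=2+\sqrt{2}$, donc $s_1s_2$ est d'ordre $8$; en le complétant par une réflexion $t$ telle que $C(s_1,t)=2+\sqrt{2}$ et $C(s_2,t)=0$, on obtient un $W(8,8,2)$, affine complexe de partie linéaire $D_8$. Pour $W(\tilde{B}_2)=W(4,4,2)$ (point 3) on choisirait des réflexions dont les produits deux à deux sont d'ordres $4$, $4$ et $2$ (un produit d'ordre $4$ étant par exemple fourni par $s_1s_3$ dans $w(3,4,8)$), et pour $W(\tilde{G}_2)=W(3,6,2)$ (point 1) d'ordres $3$, $6$ et $2$: le produit d'ordre $6$ provient de $s_2s_3$ dans $w(8,8,6)$ (où $\gamma=3$), celui d'ordre $3$ de la relation $(s_2s_3^{s_1s_3})^{3}=1$, et une réflexion commutant aux deux premières fournit l'étiquette $2$.

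Les deux sous-groupes restants s'identifient à des groupes déjà étudiés. Pour le point 5, trois réflexions dont les produits sont d'ordres $4$, $4$ et $8$ et qui satisfont la relation supplémentaire du paragraphe 1 sur $W(4,4,r)$ engendrent un groupe $W(4,4,8)$ dont le quotient linéaire $G/N$ est $G(8,4,2)=G(2\cdot 4,4,2)$. Pour le point 2, les $12$ réflexions de $G$ situées au-dessus de la classe de $G_{12}$ dans $G_{13}$ fournissent trois réflexions réalisant les données de $w(3,3,4)$ avec la relation $(s_1(s_2s_3)^{2})^{2}=1$; le théorème du paragraphe 2 montre alors qu'elles engendrent l'extension affine de $G_{12}$. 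Puisque $G_{12}$ est d'indice $2$ dans $G_{13}$, les $18$ réflexions de $G_{13}$ se répartissent nécessairement en deux classes ($12+6$), ce qui justifie la remarque entre parenthèses.

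La partie routinière est le calcul de $C$; l'obstacle principal est de montrer que chaque sous-groupe $H$ est le groupe affine \emph{complet}, et non un quotient fini du groupe de Coxeter correspondant (a priori un sous-groupe de réflexion peut être fini si les racines choisies engendrent un sous-espace non dégénéré). La bonne formulation passe par le module des translations: les réflexions choisies ne doivent pas seulement satisfaire les relations de Coxeter, mais aussi engendrer, via leurs commutateurs $[N,t_i]$ (calculés comme dans la proposition donnant les $[N,s_i]$), un sous-réseau de rang plein de $N$ dans le sous-espace engendré par leurs racines. C'est la structure de ce réseau $H\cap N$ qui distingue les cas euclidiens (points 1 et 3, où $H\cap N$ est un $\mathbb{Z}$-réseau de rang $2$) des cas proprement complexes (points 4 et 5, où $H\cap N$ est stable par multiplication par les entiers cyclotomiques, donc de $\mathbb{Z}$-rang strictement plus grand). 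On vérifierait dans chaque cas que la partie linéaire est le groupe fini attendu et que le noyau de la projection linéaire est un réseau de rang plein, ce qui est précisément ce qui rend le sous-groupe affine.
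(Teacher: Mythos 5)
Your overall strategy coincides with the paper's: exhibit triples of reflections of $G$ whose pairwise product orders, read off from the values of $C$, realize the required Coxeter data. But the proposal never actually produces these triples, and that is the entire content of the proposition. For points 2, 3, 4 and 5 you postulate a completing reflection $t$ with prescribed values $C(s_1,t)$, $C(s_2,t)$ ("en le complétant par une réflexion $t$ telle que\dots", "on choisirait des réflexions\dots") without exhibiting one; the whole difficulty is precisely to show such reflections exist inside $G$. Worse, the single case where you are concrete, point 1, does not work: in $w(8,8,6)$ your two products $s_2s_3$ (order $6$) and $s_2s_3^{s_1s_3}$ (order $3$) force the triple $\{s_2,\,s_3,\,s_3^{s_1s_3}\}$, and its third pair satisfies $s_3\cdot s_3^{s_1s_3}=(s_1s_3)^{2}$, of order $4$ since $s_1s_3$ has order $8$; so this triple realizes $(6,3,4)$ (which is the affine $G_{12}$ of point 2, not $W(\tilde{G}_{2})$), while a reflection commuting with \emph{both} of the first two, as you then suggest, would give a disconnected diagram, again not $\tilde{G}_{2}$, whose order-$2$ bond joins the two ends of the path $6$--$3$. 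Note also that your criterion "$st$ d'ordre $m$ si et seulement si $C(s,t)=4\cos^{2}\frac{\pi}{m}$" is not an equivalence: the paper itself uses $C=4-\gamma=2-\sqrt{2}=4\cos^{2}\frac{3\pi}{8}$ to get order $8$.

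The paper does the missing work in one single model: viewing $G$ as the affine quotient of $W(3,4,8)$ (so $\alpha=1$, $\beta=2$, $\gamma=2+\sqrt{2}$), it computes $C(s_1,s_2^{s_3})=3$, $C(s_1,s_3^{s_2})=4-\gamma$, $C(s_1,s_2^{s_3s_2})=1$, $C(s_1,s_3^{s_2s_3})=\gamma$, $C(s_1,s_2^{s_3s_2s_3})=3$, $C(s_1,s_3^{s_2s_3s_2})=2$. Combined with identities such as $s_2\cdot s_2^{s_3s_2s_3}=(s_2s_3)^{4}$ (order $2$) and $s_3\cdot s_3^{s_2}=(s_3s_2)^{2}$ (order $4$), these six values yield all five subgroups at once: $\langle s_1,s_2,s_2^{s_3s_2s_3}\rangle$ realizes $(3,6,2)$, i.e.\ $W(\tilde{G}_{2})$; $\langle s_1,s_2,s_2^{s_3}\rangle$ realizes $(3,6,4)$, i.e.\ the affine group of $G_{12}$; $\langle s_1,s_3,s_3^{s_2s_3s_2}\rangle$ realizes $(4,4,2)$, i.e.\ $W(\tilde{B}_{2})$; $\langle s_1,s_3^{s_2},s_3^{s_2s_3}\rangle$ realizes $(8,8,2)$; and $\langle s_1,s_3,s_3^{s_2}\rangle$ realizes $(4,8,4)$, i.e.\ $W(4,4,8)$. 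Your closing paragraph raises the legitimate question of why each subgroup is the full affine group rather than a finite quotient, but it too is left in the conditional ("on vérifierait\dots"); in the paper's framework this identification follows from the earlier classification propositions, because each exhibited triple has linearly independent roots spanning $M$, hence inherits the degenerate invariant form and $\Delta=0$, so your proposed detour through the rank of $H\cap N$ is neither carried out nor needed.
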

\begin{proof}
On considère le groupe $G$ comme quotient du groupe de Coxeter $W(3,4,8)$. On a:
\begin{itemize}
  \item $C(s_{1},s_{2}^{s_{3}})=3$, donc $s_{1}s_{2}^{s_{3}}$ est d'ordre $6$;
  \item $C(s_{1},s_{3}^{s_{2}})=4-\gamma$ donc $s_{1}s_{3}^{s_{2}}$ est d'ordre $8$;
  \item $C(s_{1},s_{2}^{s_{3}s_{2}})=1$, donc $s_{1}s_{2}^{s_{3}s_{2}}$ est d'ordre $3$;
  \item $C(s_{1},s_{3}^{s_{2}s_{3}})=\gamma$ donc $s_{1}s_{3}^{s_{2}s_{3}}$ est d'ordre $8$;
  \item $C(s_{1},s_{2}^{s_{3}s_{2}s_{3}})=3$, donc $s_{1}s_{2}^{s_{3}s_{2}s_{3}}$ est d'ordre $6$;
  \item $C(s_{1},s_{3}^{s_{2}s_{3}s_{2}})=2$ donc $s_{1}s_{3}^{s_{2}s_{3}s_{2}}$ est d'ordre $4$.
\end{itemize}
Ceci implique par exemple que $<s_{1},s_{2},s_{2}^{s_{3}}>$ est isomorphe au groupe de réflexion complexe associé à $G_{12}$. on procède de la même manière dans les autres cas.
\end{proof}
\subsubsection{Le groupe $N$ et la suite $(\star\star)$}
Le corps $K=\mathbb{Q}(\sqrt{2},i)$ est isomorphe au corps $\mathbb{Q}(\zeta)$ où $\zeta$ est une racine primitive $8$-ième de l'unité. Dans la suite on prend $\zeta=\frac{1+i}{\sqrt{2}}$. On a les relations:\\ $\zeta^{2}=i$, $\zeta^{3}=\frac{-1+i}{\sqrt{2}}$, $\zeta^{4}=-1$, $\zeta^{5}=-\zeta$, $\zeta^{6}=-i=-\zeta^{2}$, $\zeta^{7}=\zeta^{-1}=-\zeta^{3}$.

L'anneau des entiers de $K$ (resp. de $\mathbb{Q}(\zeta)$) est l'anneau $\mathcal{O}(K)=\mathbb{Z}[\sqrt{2},i]$ (resp. $\mathbb{Z}[\zeta]$). Ces deux anneaux sont des anneaux principaux. L'idéal $(1+\zeta)$ de $\mathbb{Z}[\zeta]$ est maximal et l'on a $1-\zeta=-\zeta(1-\zeta+\zeta^{2})(1+\zeta)$ avec $(1-\zeta+\zeta^{2})^{-1}=1-\zeta^{2}-\zeta^{3}$, les idéaux $(1-\zeta)$ et $(1+\zeta)$ sont égaux.

On étudie maintenant le groupe $N$ lorsque $G$ est isomorphe au quotient affine du groupe de Coxeter $W(3,4,8)$.
\begin{theorem}
\begin{enumerate}
  \item Le groupe $N$ est un groupe libre de rang $2$ en tant que $\mathbb{Z}[\zeta]$-module. C'est aussi un groupe commutatif libre de rang $8$.
  \item La suite $(\star\star)$ est non scindée.
\end{enumerate}
\end{theorem}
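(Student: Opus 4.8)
Pour les deux assertions, le plan est de me placer, comme l'indique le texte, dans la réalisation de $G$ comme quotient affine de $W(3,4,8)$, avec $\alpha=1$, $\beta=2$, $\gamma=2+\sqrt{2}$, $\alpha l=-1-\sqrt{2}+i$ et $\beta m=-1-\sqrt{2}-i$ (c'est le groupe $D_{3}^{+}$ dont on est parti pour le théorème précédent), et d'exploiter systématiquement l'arithmétique de l'anneau principal $\mathbb{Z}[\zeta]$ et de l'idéal premier maximal $\mathfrak{p}=(1+\zeta)=(1-\zeta)$ au-dessus de $2$, rappelée plus haut. On se souvient que $\gamma$ est attaché au produit $s_{2}s_{3}$ (d'ordre $8$), $\beta$ à $s_{1}s_{3}$ (d'ordre $4$) et $\alpha$ à $s_{1}s_{2}$ (d'ordre $3$).

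Pour le point 1, la démarche est celle des résultats analogues déjà établis (structure de $N$ pour les groupes $G(2r,r,2)$ et pour $G_{12}$). Le groupe $N$ est le réseau des translations, donc un $\mathbb{Z}$-module libre de type fini qui engendre sur $K$ l'espace des translations, de dimension $2$. Comme dans ces cas, on vérifie sur les générateurs que $N$ est stable par multiplication par $\gamma$ et par $\alpha l$; or $\sqrt{2}=\gamma-2$ et $i=\alpha l+\gamma-1$, de sorte que $\mathbb{Z}[\gamma,\alpha l]=\mathbb{Z}[\sqrt{2},i]=\mathbb{Z}[\zeta]$: ainsi $N$ est un $\mathbb{Z}[\zeta]$-module. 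Puisque $\mathbb{Z}[\zeta]$ est principal, le module $N$, de type fini et sans torsion, est libre; son rang est $2$ car $N\otimes_{\mathbb{Z}[\zeta]}K$ est l'espace des translations. Enfin $[K:\mathbb{Q}]=4$ donne que $N$ est un $\mathbb{Z}$-module libre de rang $2\times 4=8$.

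Pour le point 2, je reprendrais la méthode des corollaires 1 et 2 du chapitre 8, déjà employée pour $G_{12}$. Un scindage reviendrait à trouver des scalaires $\lambda_{i}\in K^{\star}$ avec $\lambda_{i}c_{i}\in N$ tels que les relevés $s_{i}'=s_{i}(\lambda_{i}c_{i})$ vérifient toutes les relations de $W(3,4,8)$ ainsi que la relation supplémentaire définissant $G_{13}$; cela se ramène, comme pour $G_{12}$, à une unique relation $(\mathcal{E})$ de membre de droite $-1$, dont le coefficient de $\lambda_{1}$ est multiple de $\gamma$ et ceux de $\lambda_{2}$, $\lambda_{3}$ multiples de $\alpha l$ et $\beta m$ respectivement. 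Un calcul de normes montre que $(\gamma)=\mathfrak{p}^{2}$ et $(\alpha l)=(\beta m)=\mathfrak{p}^{3}$; en particulier $\gamma$, $\alpha l$ et $\beta m$ sont tous dans $\mathfrak{p}=(1+\zeta)$. Chaque coefficient de $(\mathcal{E})$ est donc divisible par $1+\zeta$, et comme les contraintes $\lambda_{i}c_{i}\in N$ maintiennent les $\lambda_{i}$ dans les idéaux entiers $I_{i}=\{\lambda\in\mathbb{Z}[\zeta]:\lambda c_{i}\in N\}$, le membre de gauche appartient à $\mathfrak{p}$. Or $-1$ est une unité, donc $-1\notin\mathfrak{p}$: la relation $(\mathcal{E})$ est impossible et la suite $(\star\star)$ est non scindée.

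L'obstacle principal sera le contrôle $\mathfrak{p}$-adique précis des idéaux $I_{i}$, que je déterminerais à partir des carrés $(s_{1}(s_{2}s_{3})^{4})^{2}=\tfrac{-2}{4-\gamma}c_{1}=-\gamma c_{1}$, $(s_{2}(s_{1}s_{3})^{2})^{2}=\tfrac{-2}{l+2}c_{2}$ et de l'analogue pour $s_{3}$, complétés par l'action de $G$ et de $\mathbb{Z}[\zeta]$ sur les $c_{i}$; il faudra en déduire que chaque produit \og coefficient $\times\,\lambda_{i}$ \fg{} reste dans $\mathfrak{p}$. Toute la difficulté se concentre ainsi dans l'arithmétique du premier ramifié $(1+\zeta)$ au-dessus de $2$, les calculs de paramètres et de matrices restants étant de routine.
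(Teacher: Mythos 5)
Your proof of part 1 rests on the identity $\mathbb{Z}[\sqrt{2},i]=\mathbb{Z}[\zeta]$, and this identity is false: $\zeta=\frac{1+i}{\sqrt{2}}=\frac{\sqrt{2}+i\sqrt{2}}{2}\notin\mathbb{Z}[\sqrt{2},i]$, so $\mathbb{Z}[\sqrt{2},i]$ is a proper order (of index $2$, not integrally closed) inside $\mathbb{Z}[\zeta]$. Consequently, the stability of $N$ under multiplication by $\gamma$ and $\alpha l$ (equivalently by $\sqrt{2}$ and $i$) only makes $N$ a module over this non-maximal order; it does not give a $\mathbb{Z}[\zeta]$-module structure at all, and over $\mathbb{Z}[\sqrt{2},i]$ --- which is not a Dedekind ring, let alone a principal ideal domain --- finitely generated and torsion-free does not imply free. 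Establishing stability under $\zeta$ itself is precisely the non-trivial content of the paper's proof: it extracts explicit elements of $N$ from squares of affine elements, namely $(s_{1}(s_{2}s_{3})^{4})^{2}=-\gamma c_{1}$ and $(s_{2}(s_{3}s_{1})^{2})^{2}=\frac{-2}{l+2}c_{2}$, deduces that the ideal $I_{1}=\{\lambda:\lambda c_{1}\in N\}$ contains $\zeta-1$ and $1+\zeta$, and then verifies by direct computation that $I_{1}$ is stable under multiplication by $\zeta$, so that $I_{1}=I_{2}=(1+\zeta)$ is the maximal ideal of $\mathbb{Z}[\zeta]$ above $2$; freeness of rank $2$ (hence $\mathbb{Z}$-rank $8$) then follows because $(1+\zeta)$ is principal. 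Nothing in your argument substitutes for this step.

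The same gap then undermines part 2. Your overall strategy (reduce to the relation $(\mathcal{E})$, show the side carrying the $\lambda_{i}$ lies in $\mathfrak{p}=(1+\zeta)$ while $-1$ is a unit) is indeed the paper's, but your description of the coefficients is wrong: in $(\mathcal{E})$, $-1=(4-\gamma)\lambda_{1}+(l+2)\lambda_{2}+(m+2)\lambda_{3}$, and while $4-\gamma$ and $l+2$ are integral multiples of $\gamma$ and $\alpha l$, the third coefficient $m+2$ is \emph{not} a multiple of $\beta m$: here $m=-\zeta^{2}/(1+\zeta)$, so $m$ and $m+2$ are not even algebraic integers (the $\mathfrak{p}$-adic valuation of $m+2$ is $-1$). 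Thus ``every coefficient lies in $\mathfrak{p}$'' fails, and concluding $(m+2)\lambda_{3}\in\mathfrak{p}$ requires knowing that $(1+\zeta)^{2}$ divides every $\lambda_{3}\in I_{3}$ --- which the paper obtains from $mI_{3}\subset I_{2}=(1+\zeta)$, i.e.\ from the complete determination of the ideals $I_{j}$ carried out in part 1. You do flag this $\mathfrak{p}$-adic control of the $I_{i}$ as the main obstacle, but since your part 1 bypasses (via the false ring identity) exactly the computation that provides it, the obstacle remains unresolved and the proof is incomplete on both points.
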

\begin{proof}
1) On a $\alpha=1$, $\beta=2$, $\gamma=2+\sqrt{2}$, $\Delta=0$ donc $l+2m=2-2\gamma$ et nous choisissons, sans perte de généralité, $l=1+i-\sqrt{2}$ et $2m=1-i-\sqrt{2}$.

Si $n\in N$, alors $\alpha n$, $\beta n$, $\gamma n$, $\alpha ln$, $\beta mn$ et $\theta n$ sont dans $N$, donc $\sqrt{2}n\in N$, $(1+i-\sqrt{2})n\in N$, d'où $in\in N$. Il en résulte que pour $1\leqslant j \leqslant3$, $I_{j}$ est stable par multiplication par $\sqrt{2}$ et $i$. Comme $\alpha=1$, $I_{2}=I_{1}$ et aussi $2I_{1}\subset I_{3}\subset I_{1}$, $lI_{2}\subset I_{3}$ et $mI_{3}\subset I_{2}$.

Nous exprimons maintenant tous les éléments qui interviendront dans la suite en  fonction  de $\zeta$.
On a les formules suivantes:\\
\begin{lemma}
\begin{itemize}
  \item $i\sqrt{2}=(1+\zeta)^{2}(1-\zeta+\zeta^{2})$, $\sqrt{2}=-\zeta^{2}(1+\zeta)^{2}(1-\zeta+\zeta^{2})$;
  \item $1+i=-\zeta^{3}(1+\zeta)^{2}(1-\zeta+\zeta^{2})$, $1-i=-\zeta(1+\zeta)^{2}(1-\zeta+\zeta^{2})$; 
  \item $2=-(1+\zeta)^{4}(1-\zeta+\zeta^{2})^{2}$;
  \item $\gamma=2+\sqrt{2}=-\zeta^{3}(1+\zeta)^{2}$, $4-\gamma=2-\sqrt{2}=-\zeta(1+\zeta)^{2}(1-\zeta+\zeta^{2})$;
  \item $l=-\zeta^{3}(1+\zeta)^{3}(1-\zeta+\zeta^{2})^{2}$, $2m=\zeta^{2}(1+\zeta)^{3}(1-\zeta+\zeta^{2})^{2}$;
  \item $m=-\frac{\zeta^{2}}{1+\zeta}$;
  \item $l+2=-(1+\zeta)^{3}(1-\zeta+\zeta^{2})^{2}(1+\zeta+\zeta^{3})$;
  \item $m\sqrt{2}=-(1+\zeta)(1-\zeta+\zeta^{2}$, $\frac{l}{\sqrt{2}}=\zeta-1=\zeta(1+\zeta)(1-\zeta+\zeta^{2})$.
\end{itemize}
\end{lemma}
\begin{proof}
Par le calcul.
\end{proof}
Nous trouvons maintenant des éléments de $N$.

On a $t_{4}^{2}=(s_{1}(s_{2}s_{3})^{4})^{2}=\frac{-2}{4-\gamma}c_{1}=-\gamma c_{1}$, donc $\gamma\in I_{1}$. Comme $I_{1}$ est stable par multiplication par $4-\gamma$, on voit que $2\in I_{1}$, donc aussi $\sqrt{2}\in I_{1}$.

On a $x_{2}^{2}=(s_{2}(s_{3}s_{1})^{2})^{2}=\frac{-2}{l+2}c_{2}$, donc $\frac{2}{l+2}\in I_{2}=I_{1}$, mais $\frac{2}{l+2}=\frac{2(m+1)}{(l+2)(m+1)}=\frac{2(m+1)}{4-\gamma}=\gamma(m+1)$ donc $\gamma m\in I_{1}$. Il en résulte que $(4-\gamma)\gamma m=2m\in I_{1}$, d'où $1-\sqrt{2}-i\in I_{1}$ et comme $\sqrt{2}\in I_{1}$, on voit que $1-i\in I_{1}$ d'où aussi $1+i\in I_{1}$, ce qui implique que $l\in I_{1}$. On a aussi $\gamma m=2m+m\sqrt{2}\in I_{1}$, donc $m\sqrt{2}\in I_{1}$ ou encore $\frac{1-i}{\sqrt{2}}-1\in I_{1}$ : $-1+\zeta^{-1}\in I_{1}$.

$l$ et $2i$ sont dans $I_{1}$, donc $l-2i=1-i-\sqrt{2}\in I_{1}$. On vient de voir que $\frac{1-i-\sqrt{2}}{\sqrt{2}}\in I_{1}$, donc $\frac{l}{2i}-i\sqrt{2}\in I_{1}$ et comme $-i\sqrt{2}\in I_{1}$, on obtient $\frac{l}{\sqrt{2}}=\zeta-1\in I_{1}$. Comme $2\in I_{1}$, on voit que $1+\zeta\in I_{1}$.

On vérifie maintenant que si on multiplie chacun des éléments précédents par $\zeta$ on obtient encore des éléments de $I_{1}$: $I_{1}$ est stable par multiplication par $\zeta$: c'est l'idéal  maximal $(1+\zeta)$. Tous les éléments écrits ci-dessous sont dans $I_{1}$:

\begin{multline*}
2\zeta=\sqrt{2}+i\sqrt{2};\zeta\sqrt{2}=1+i;\zeta\gamma=2\zeta+\sqrt{2}\zeta, \zeta(4-\gamma);\\
\zeta(1+i)=i\sqrt{2};\zeta(1-i)=\sqrt{2};\zeta l=((1+i)-\sqrt{2})\zeta; \\
2m\zeta=((1-i)-\sqrt{2})\zeta; m\sqrt{2}\zeta=1-\zeta=\frac{-l}{\sqrt{2}};\\
\zeta(1+\zeta)=2\zeta-\zeta(1-\zeta).
\end{multline*}
Comme $(c_{1},c_{2})$ est un système libre, on voit que $N$ est un $\mathbb{Z}[\zeta]$-module libre de rang $2$ et donc $N$ est un groupe commutatif libre de rang $8$.

2) Si $\lambda\in I_{3}$, alors $\lambda m\in I_{2}=I_{1}$, donc $\lambda m$ est un multiple de $1+\zeta$ et $(1+\zeta)^{2}$ divise $\lambda$. Une condition suffisante pour que la suite $(\star\star)$ soit non scindée est que la relation $(\mathcal{E})$
\[
-1=(4-\gamma)\lambda_{1}+(l+2)\lambda_{2}+(m+2)\lambda_{3}
\]
ne puisse être satisfaite quels que soient $\lambda_{j}$ dans $I_{j}$. D'après ce qui précède $1+\zeta$ divise le membre de droite de $(\mathcal{E})$ mais pas le membre de gauche, d'où le résultat.
	
\end{proof}
\section{Le groupe de réflexion complexe $G_{22}$ et groupes associés}
Dans toute la suite, si $G$ est un groupe de réflexion, on suppose que $\Delta(G)=0$.

On sait que le groupe de réflexion complexe $G_{22}$ est isomorphe à une extension centrale non scindée du groupe $W(H_{3})$ par un groupe d'ordre $2$. Si $\tilde{A}_{5}$ est l'extension centrale universelle du groupe alterné $A_{5}$, alors $G_{22}\simeq C_{4}\star \tilde{A}_{5}$ avec sous-groupe d'ordre $2$ amalgamé.

On sait que le corps de définition de $G_{22}$ comme groupe de réflexion complexe est $K=\mathbb{Q}(\sqrt{5},i)$.\\
 Il en résulte que le corps $K_{0}=\mathbb{Q}(\sqrt{5})$. On a alors $\{p,q,r\}\subset\{3,4,5,6,10\}$ avec l'un de $p$, $q$, ou $r$ égal à $5$ ou $10$.\\
  Nous sommes ainsi amenés à examiner les triples suivants pour $(p,q,r)$:\\
  $(3,3,5)$, $(3,3,10)$, $(3,4,5)$, $(3,4,10)$, $(3,6,5)$, $(3,6,10)$, $(3,5,5)$, $(3,5,10)$, \\ 
  $(3,10,10)$, $(4,4,5)$, $(4,4,10)$, $(4,6,5)$, $(4,6,10)$, $(4,5,5)$, $(4,5,10)$, $(4,10,10)$,\\ 
  $(6,6,5)$, $(6,6,10)$, $(6,5,5)$, $(6,5,10)$,$(6,10,10)$, $(5,5,5)$, $(5,5,10)$, \\
  $(5,10,10)$, $(10,10,10)$.
 les polynômes $v_{5}(X)$ et $v_{10}(X)$ possèdent chacun deux racines; pour les distinguer nous mettrons une étoile sur le $5$ (resp. le $10$)  et que les racines correspondantes sont distinctes. On doit donc ajouter les triples:\\
$(3,5,5^{*})$, $(3,10,10^{*})$, $(3,5,10^{*})$, $(4,5,5^{*})$, $(4,5,10^{*})$, $(4,10,10^{*})$, $(6,5,5^{*})$,\\ $(6,5,10^{*})$, $(6,10,10^{*})$, $(5,5,5^{*})$, $(5,5^{*}10)$, $(5,5,10^{*})$, $(5,10,10^{*})$, $(10,10,10^{*})$.\\
 Il y a donc trente neuf cas à examiner. Parmi ceux-ci, on peut enlever $(4,4,5)$ et $(4,4,10)$ qui correspondent à des groupes de réflexion complexes imprimitifs (voir la section 1), ainsi que $(5,5,5^{*})$ et $(5,10,10^{*})$ qui correspondent à des groupes diédraux déjà étudiés précédemment.

Soit $\epsilon\in\{-1,+1\}$. On pose $:\tau_{\epsilon}=\frac{3+\epsilon\sqrt{5}}{2}$. On obtient ainsi les deux racines de $v_{5}(X)$; celles de $v_{10}(X)$ sont alors $\tau_{\epsilon}+1$, ou $4-\tau_{\epsilon}$. On aura aussi $\epsilon\in\{+,-\}$ lorsque cela ne posera pas de problèmes.
\subsection{différents corps et groupes}
Nous cherchons d'abord tous les triples $(p,q,r)$ pour lesquels $K=\mathbb{Q}(\sqrt{5},i)$. Pour cela nous donnons les corps obtenus en prenant chacun de ces triples ainsi que quelque propriétés des groupes correspondants.
\subsubsection{Le corps $K_{1}=K_{1,\epsilon}=\mathbb{Q}(\sqrt{-\epsilon\sqrt{5}})$}
On obtient ce corps pour les triples suivants: $(3,3,5)$, $(6,6,5)$, $(3,6,10)$.\\
(i) $(p,q,r)=(3,3,5)$ avec $\gamma=\tau_{epsilon}$. Posons $w_{1}=\sqrt[4]{5}$, $w_{2}=iw_{1}$, $w_{3}=-w_{1}$ et $w_{4}=-w_{2}$ et soit $\tilde{K}_{1}$ le corps de décomposition du polynôme $P_{1}(X)=X^{4}-5$ dont les racines sont les $w_{i}$. Soit $A_{1}^{\epsilon}$ le groupe obtenu par la construction fondamentale. nous montrons que $A_{1}^{+}$ et $A_{1}^{-}$ sont deux groupes isomorphes.

Nous avons $\mathcal{P(A_{1}^{\epsilon}})=\mathcal{P}(1,1,\tau_{\epsilon};2-\tau_{\epsilon}+\sqrt{-\epsilon\sqrt{5}},2-\tau_{\epsilon}-\sqrt{-\epsilon\sqrt{5}})$. Posons $l_{1}^{\epsilon}:=2-\tau_{\epsilon}+\sqrt{-\epsilon\sqrt{5}}$ et $m_{1}^{\epsilon}:=2-\tau_{\epsilon}-\sqrt{-\epsilon\sqrt{5}}$. Nous avons $\tau_{-1}:=\frac{3-\sqrt{5}}{2}$, $2-\tau_{-1}=\frac{1+\sqrt{5}}{2}$; $\tau_{1}:=\frac{3+\sqrt{5}}{2}$, $2-\tau_{1}=\frac{1-\sqrt{5}}{2}$ donc 
\begin{align*}
l_{1}^{(-1)} & = &\frac{1+\sqrt{5}}{2}+\sqrt[4]{5} & , & m_{1}^{(-1)} & = &\frac{1+\sqrt{5}}{2}-\sqrt[4]{5}&&&&& \\
l_{1}^{(+1)} & = &\frac{1-\sqrt{5}}{2}+i\sqrt[4]{5} & , &  m_{1}^{(+1)} & = &\frac{1-\sqrt{5}}{2}-i\sqrt[4]{5}&&&&&
\end{align*}
Soit $\theta$ l'automorphisme d'ordre $2$ de $\tilde{K}$ défini par $\theta(w_{1})=w_{3}$, $\theta(w_{2})=w_{4}$. Des calculs simples montrent que $\theta(\sqrt{5})=-\sqrt{5}$ et $\theta(i)=i$. On peut remarquer que $\theta(l_{1}^{(+)})=l_{1}^{(-)}$ et $\theta(m_{1}^{(+)})=m_{1}^{(-)}$.

Soit $\tilde{M}:=M\bigotimes_{K}\tilde{K}$ l'espace vectoriel obtenu par extension des scalaires. Soient $\mathcal{A}=(a_{1},a_{2},a_{3})$ (resp. $\mathcal{B}=(b_{1}
,b_{2},b_{3})$) la base de $\tilde{M}$ adaptée pour la représentation de réflexion de $A_{1}^{+}=<s_{1}s_{2},s_{3}>$ (resp. de $A_{1}^{-}=<t_{1},t_{2},t_{3}>$). Si $\sigma:\tilde{M}\to \tilde{M}$ est l'application $\theta$-semi-linéaire définie par $\sigma(a_{i})=b_{i}$ $(1\leqslant i\leqslant 3)$, on a $\sigma s_{i}\sigma^{-1}=t_{i}$ $(1\leqslant i\leqslant 3)$: les groupes $A_{1}^{+}$ et $A_{1}^{-}$ sont isomorphes.

Un calcul simple montre que $(A_{1}^{\epsilon})'$ est infini.

Soit $z_{1}$ l'unique élément de $\mathcal{Z}'$ qui commute à $s_{2}$ et à $s_{3}$. Alors $z_{1}s_{2}$ et $z_{1}s_{3}$ sont des réflexions, $C(s_{1},z_{1}s_{2})=4-\alpha=3$, $C(s_{1},z_{1}s_{3})=4-\beta=3$ et $C(z_{1}s_{2},z_{1}s_{3})=\gamma$ donc $A_{1}^{\epsilon}$ est isomorphe à un $R$-sous-groupe de $A_{2}^{\epsilon}$.\\
(ii) $(P,Q,R)=(6,6,5)$ avec $\gamma=\tau_{\epsilon}$. Nous gardons les notations précédentes. Soit $A_{2}^{\epsilon}$ le groupe obtenu par la construction fondamentale. Nous avons $\mathcal{P}(A_{2}^{\epsilon})=\mathcal{P}(3,3,\tau_{\epsilon};-2-\tau_{\epsilon}+\sqrt{-\epsilon\tau_{\epsilon}},-2-\tau_{\epsilon}-\sqrt{-\epsilon\tau_{\epsilon}})$. nous obtenons $3l_{2}^{(+1)}=-2-\tau_{1}+w_{2}$, $3m_{2}^{(+1)}=-2-\tau_{1}+w_{4}$ et $3l_{2}^{(-1)}=-2-\tau_{1}+w_{12}$, $3m_{2}^{(-1)}=-2-\tau_{1}+w_{3}$. Soit $\theta\in \mathcal{G}al(\frac{\tilde{K}}{\mathbb{Q}})$ défini par $\theta^{2}=id$, $\theta(w_{2})=w_{1}$, $\theta(w_{4})=w_{3}$ alors $\theta(\sqrt{5})=-\sqrt{5}$ et $\theta(\tau_{1})=\tau_{-1}$, d'où $\theta(l_{2}^{(+1)})=l_{2}^{(-1)}$ et $m_{2}^{(+1)}=m_{2}^{(-1)}$. Soit $\sigma$ l'application $\theta$-semi-linéaire définie comme dans le (i). on obtient $\sigma A_{2}^{+}\sigma^{-1}=A_{2}^{-}$.

Un calcul simple montre que $(A_{2}^{\epsilon})'$ est infini.\\
(iii) $(p,q,r)=(3,6,10)$ avec $\gamma=4-\tau_{\epsilon}$. Nous gardons les notations précédentes. Soit $A_{3}^{\epsilon}$ le groupe obtenu par la construction fondamentale. Nous avons $\mathcal{P}(A_{3}^{\epsilon})=\mathcal{P}(1,3,4-\tau_{\epsilon};-4+\tau_{\epsilon}+\sqrt{-\epsilon\sqrt{5}},-4+\tau_{\epsilon}-\sqrt{-\epsilon\sqrt{5}})$. Nous obtenons $l_{3}^{(+1)}=-4+\tau_{+1}+w_{2}$, $3m_{3}^{(+1)}=-4+\tau_{+1}+w_{4}$ et $l_{3}^{(-1)}=-4+\tau_{-1}+w_{1}$, $3m_{3}^{(-1)}=-4+\tau_{-1}+w_{3}$. Soit $\theta\in \mathcal{G}al(\frac{\tilde{K}}{\mathbb{Q}})$ défini par $\theta^{2}=id$, $\theta(w_{2})=w_{1}$ $\theta(w_{4})=w_{3}$ et $\theta(\tau_{+1})=\tau_{-1}$ d'où $\theta(l_{3}^{(+1)})=l_{3}^{(-1}$ et $\theta(m_{3}^{(+1)})=m_{3}^{(-1}$. Soit $\sigma$ l'application semi-linéaire définie comme dans le (i). On obtient $\sigma A_{3}^{+1}\sigma^{-1}=A_{3}^{-1}$.

Les groupes $A_{2}^{\epsilon}$ et $A_{3}^{\epsilon}$ sont isomorphes. En effet si nous partons de $A_{3}^{\epsilon}$, nous avons $C(s_{2},s_{1}(s_{1}s_{3})^{3})=3$ , $C(s_{2},s_{3}(s_{1}s_{3})^{3})=\tau_{\epsilon}$, $A_{3}^{\epsilon}=<s_{1}(s_{1}s_{3})^{3}),s_{2},s_{1}(s_{3}s_{3})^{3})>$ avec$(p,q,r)=(6,6,5)$.

\textbf{Dans la suite nous ne donnons que le minimum d'information pour les autres cas, à l'exception de ceux donnant le corps $K=\mathbb{Q}(\sqrt{5},i)$}.
\subsubsection{Le corps $K_{2}=K_{2,\epsilon}=\mathbb{Q}(\sqrt{6(1+\epsilon\sqrt{5}}))$}
On obtient ce corps pour les triples suivants: $(5,5,3)$, $(10,10,3)$, $(5,10,6)$.\\
(i) $(p,q,r)=(5,5,3)$. Posons $w_{1}=\sqrt{6(1+\sqrt{5})}$, $w_{2}=-w_{1}$, $w_{3}=\sqrt{6(1-\sqrt{5})}$, $w_{4}=-w_{3}$ où $w_{1}$, $w_{2}$, $w_{3}$ et $w_{4}$ sont les racines du polynôme $P_{2}(X)=X^{4}-12X^{2}-144$ dans une extension $\tilde{K}_{2}$ de $\mathbb{Q}$. Le groupe de Galois $\mathcal{G}(\tilde{K}_{2}/\mathbb{Q})$ est un groupe diédral d'ordre $8$. Soient $B_{1}^{\epsilon}, (\epsilon\in\{+,-\})$ les groupes obtenus grâce à la construction fondamentale. On a:
\[
\mathcal{P}(B_{1}^{+})=\mathcal{P}(\tau,\tau,1;-\sqrt{5}+\frac{w_{3}}{2},-\sqrt{5}-\frac{w_{3}}{2})
\]
et 
\[
\mathcal{P}(B_{1}^{-})=\mathcal{P}(\tau_{-},\tau_{-},1;\sqrt{5}+\frac{w_{1}}{2},\sqrt{5}-\frac{w_{1}}{2})
\]
où $\tau=\frac{3+\sqrt{5}}{2}$ et $\tau_{-}=\frac{3-\sqrt{5}}{2}$.

Soit $\theta\in\mathcal{G}(\tilde{K}_{2}/\mathbb{Q})$ défini par $\theta(w_{1})=w_{3}$ et $\theta^{2}=id$. Alors on a $\theta(\tau l_{+})=\tau_{-}l_{-}$ et $\theta(\tau m_{+})=\tau_{-}m_{-}$; comme dans le 1) si $\sigma$ est l'application $\theta$-semi-linéaire $\sigma:\tilde{M}\to\tilde{M}$, alors $\sigma(B_{1}^{+})=B_{1}^{-}$ et ces groupes sont isomorphes.\\
(ii) -- $(p,q,r)=(10,10,3)$. On a $\alpha=\beta=\frac{5+\epsilon\sqrt{5}}{2}$, $\gamma=1$. Soient $B_{2}^{\epsilon}, (\epsilon\in\{+,-\})$ les groupes obtenus grâce à la construction fondamentale. On a:
\[
\mathcal{P}(B_{2}^{\epsilon})=\mathcal{P}(\alpha,\alpha,1;3-2\alpha+\sqrt{3\alpha-6},3-2\alpha-\sqrt{3\alpha-6})
\]
avec $3\alpha-6=\frac{1}{4}(6(1+\epsilon\sqrt{5}))$.\\
-- $(p,q,r)=(5,10,6)$. On a $\alpha=\frac{3-\epsilon\sqrt{5}}{2}$, $\beta=4-\alpha$, $\gamma=3$. Soient $B_{3}^{\epsilon}, (\epsilon\in\{+,-\})$ les groupes obtenus grâce à la construction fondamentale. On a:
\[
\mathcal{P}(B_{3}^{\epsilon})=\mathcal{P}(\alpha,4-\alpha,3;3+\frac{1}{2}\sqrt{6(1+\epsilon\sqrt{5})},3-\frac{1}{2}\sqrt{6(1+\epsilon\sqrt{5})})
\]
Alors comme dans le 1), on voit que les groupes $B_{2}^{+}$, $B_{2}^{-}$, $B_{3}^{+}$, $B_{3}^{-}$ sont isomorphes et $B_{1}^{\epsilon}$ est un $R$-sous-groupe de ceux-ci.

Si $B$ est l'un de ces groupes, alors $B'$ est infini.
\subsubsection{Le corps $K_{3}=K_{3,\epsilon}=\mathbb{Q}(\sqrt{2(-1+\epsilon\sqrt{5}})$}
On obtient ce corps pour les triples suivants: $(5,5,5)$, $(10,10,5)$.\\
(i) $(p,q,r)=(5,5,5)$. Posons $w_{1}=\sqrt{2(-1+\sqrt{5})}$, $w_{2}=-w_{1}$, $w_{3}=\sqrt{2(-1-\sqrt{5})}$, $w_{4}=-w_{3}$ où $w_{1}$, $w_{2}$, $w_{3}$ et $w_{4}$ sont les racines du polynôme $P_{2}(X)=X^{4}-4X^{2}-16$ dans une extension $\tilde{K}_{3}$ de $\mathbb{Q}$. Le groupe de Galois $\mathcal{G}(\tilde{K}_{3}/\mathbb{Q})$ est un groupe diédral d'ordre $8$. Soient $C_{1}^{\epsilon}, (\epsilon\in\{+,-\})$ les groupes obtenus grâce à la construction fondamentale. On a:
\[
\mathcal{P}(C_{1}^{\epsilon})=\mathcal{P}(\tau_{\epsilon},\tau_{\epsilon},\tau_{\epsilon};4-3\tau_{\epsilon}+\sqrt{5(2-\tau_{\epsilon}}),4-3\tau_{\epsilon}-\sqrt{5(2-\tau_{\epsilon}}))
\]
Les groupes $C_{1}^{+}$ et $C_{1}^{-}$ sont isomorphes.\\
(ii) $(p,q,r)=(10,10,5)$. Soient $C_{2}^{\epsilon}, (\epsilon\in\{+,-\})$ les groupes obtenus grâce à la construction fondamentale. On a un groupe diédral affine si $\alpha\neq\beta$ et aussi si $\alpha=\beta=\tau_{\epsilon}+1$ et $\gamma=\tau_{\epsilon}$. Si $\alpha=\beta=\tau_{\epsilon}+1$ et $\gamma=\tau_{-\epsilon}$ alors 
\[
\mathcal{P}(C_{2}^{\epsilon})=\mathcal{P}(\tau_{\epsilon}+1,\tau_{\epsilon}+1,\tau_{-\epsilon};-1+\sqrt{-1-\tau_{-\epsilon}},-1-\sqrt{-1-\tau_{-\epsilon}}).
\]
Les groupes $C_{2}^{+}$ et $C_{2}^{-}$ sont isomorphes et les groupes $C_{1}^{\epsilon}$ sont isomorphes à des $R$-sous-groupes de ceux-ci. La démonstration est semblable à celle du 1).
\subsubsection{Les corps $K_{4}=K_{4,\epsilon}=\mathbb{Q}(\sqrt{(-1-\epsilon\sqrt{5}})$ et $K_{4}'=K_{4,\epsilon}'=\mathbb{Q}(\sqrt{(1+\epsilon\sqrt{5}})$}
Posons $w_{1}=\sqrt{(-1-\sqrt{5})}$, $w_{2}=-w_{1}$, $w_{3}=\sqrt{(-1+\sqrt{5})}$, $w_{4}=-w_{3}$ où $w_{1}$, $w_{2}$, $w_{3}$ et $w_{4}$ sont les racines du polynôme $P_{4}(X)=X^{4}+2X^{2}-4$ dans une extension $\tilde{K}_{4}$ de $\mathbb{Q}$. Le groupe de Galois $\mathcal{G}(\tilde{K}_{4}/\mathbb{Q})$ est un groupe diédral d'ordre $8$. On peut remarquer que $\mathbb{Q}(\sqrt{5},i)$ est un sous-corps de $\tilde{K}_{4}$ et aussi $K_{4}$ et $K_{4}'$.\\
 
(I) On obtient le  corps $K_{4}$ pour les triples suivants: $(5,5,4)$, $(10,10,4)$, $(5,10,4)$.\\
(i) $(p,q,r)=(5,5,4)$.  Soient $D_{1}^{\epsilon}, (\epsilon\in\{+,-\})$ les groupes obtenus grâce à la construction fondamentale. On a:
\[
\mathcal{P}(D_{1}^{\epsilon})=\mathcal{P}(\tau_{\epsilon},\tau_{\epsilon},2;-1-\epsilon\sqrt{5}+w_{1},-1-\epsilon\sqrt{5}+w_{2}).
\]
Les groupes $D_{1}^{+}$ et $D_{1}^{-}$ sont isomorphes.\\
(ii) $(p,q,r)=(10,10,4)$. Soient $D_{2}^{\epsilon}, (\epsilon\in\{+,-\})$ les groupes obtenus grâce à la construction fondamentale. On a:
\[
\mathcal{P}(D_{2}^{\epsilon})=\mathcal{P}(4-\tau_{\epsilon},4-\tau_{\epsilon},2;-6+2\tau_{\epsilon}+w_{1},-6+2\tau_{\epsilon}+w_{2}).
\]
Les groupes $D_{2}^{+}$ et $D_{2}^{-}$ sont isomorphes.\\
(iii) $(p,q,r)=(5,10,4)$. Soient $D_{3}^{\epsilon}, (\epsilon\in\{+,-\})$ les groupes obtenus grâce à la construction fondamentale. On a:
\[
\mathcal{P}(D_{3}^{\epsilon})=\mathcal{P}(\tau_{\epsilon},4-\tau_{\epsilon},2;-2+w_{1},-2+w_{2}).
\]
Les groupes $D_{i}^{\epsilon}$ $(1\leqslant i \leqslant3),\epsilon\in\{+,-\}$ sont isomorphes et si $L$ est l'un d'entre eux, alors $L'$ est infini.

(II) On obtient le  corps $K_{4}'$ pour les triples suivants: $(10,10,10)$, $(5,5,10)$.\\
(i) $(p,q,r)=(10,10,10)$. Soient $D_{4}^{\epsilon}, (\epsilon\in\{+,-\})$ les groupes obtenus grâce à la construction fondamentale. On a:
\[
\mathcal{P}(D_{4}^{\epsilon})=\mathcal{P}(\tau_{\epsilon}+1,\tau_{\epsilon}+1,4-\tau_{\epsilon};-2-\tau_{\epsilon}+iw_{1},-2-\tau_{\epsilon}+iw_{2}).
\]
(ii) $(p,q,r)=(5,5,10)$. Soient $D_{5}^{\epsilon}, (\epsilon\in\{+,-\})$ les groupes obtenus grâce à la construction fondamentale. On a:
\[
\mathcal{P}(D_{5}^{\epsilon})=\mathcal{P}(\tau_{\epsilon},\tau_{-\epsilon},\tau_{\epsilon}+1;-\tau_{\epsilon}+iw_{1},-\tau_{\epsilon}+iw_{2}).
\]
ou bien:
\[
\mathcal{P}(D_{6}^{\epsilon})=\mathcal{P}(\tau_{-\epsilon},\tau_{-\epsilon},\tau_{-\epsilon}+1;3-3\tau_{-\epsilon}+iw_{1},3-3\tau_{-\epsilon}+iw_{2}).
\]
Les groupes $D_{4}^{\epsilon}$, $D_{5}^{\epsilon}$ et $D_{6}^{\epsilon}$ sont isomorphes entre eux mais ils ne sont pas isomorphes aux groupes $D_{i}^{\epsilon}$ $(1\leqslant i\leqslant3, \epsilon\in \{+,-\})$; de plus les groupes $(D_{n}^{\epsilon})'$ sont infinis $(1\leqslant n \leqslant6,\epsilon\in \{+,-\})$.
\subsubsection{Le corps $K=\mathbb{Q}(\sqrt{5})(=K_{0})$}
On obtient le  corps $K$ pour les triples suivants: $(5,5^{*},6)$, $(10,10^{*},6)$, $(5,10,3)$.\\
(i) $(p,q,r)=(5,5^{*},6)$. Soit $E_{1}$ le groupe obtenu grâce à la construction fondamentale. On a:
\[
\mathcal{P}(E_{1})=\mathcal{P}(\tau,3-\tau,3;-1,-3).
\]
(ii) $(p,q,r)=(10,10^{*},6)$. Soit $E_{2}$ le groupe obtenu grâce à la construction fondamentale. On a:
\[
\mathcal{P}(E_{2})=\mathcal{P}(4-\tau,\tau+1,3;-3,-5).
\]
(iii) $(p,q,r)=(5,10,3)$. Soit $E_{3}$ le groupe obtenu grâce à la construction fondamentale. On a:
\[
\mathcal{P}(E_{3})=\mathcal{P}(\tau,\tau+1,1;2\tau-1,2\tau-3).
\]
Les groupes $E_{i}$, $(1\leqslant i \leqslant 3)$ sont isomorphes entre eux et $E_{i}'$, $(1\leqslant i \leqslant 3)$ est infini. On peut remarquer qu'il n'y a aucune forme bilinéaire non nulle invariante.
\subsubsection{Le corps $K=\mathbb{Q}(\sqrt{5},i)$}
\begin{proposition}
On obtient le  corps $K$ pour les triples suivants (dans tous les cas on appelle $G_{i}$ le groupe obtenu grâce à la construction fondamentale):\\
(1) $(p,q,r)=(3,3,10)$. On a $\mathcal{P}(G_{1})=\mathcal{P}(1,1,\tau_{\epsilon}+1;-1-\tau_{\epsilon}+i,-1-\tau_{\epsilon}-i)$.\\
(2) $(p,q,r)=(6,6,10)$. On a $\mathcal{P}(G_{2})=\mathcal{P}(3,3,\tau_{\epsilon}+1;-3-\tau_{\epsilon}+i,-3-\tau_{\epsilon}-i)$.\\
(3) $(p,q,r)=(3,6,5)$. On a $\mathcal{P}(G_{3})=\mathcal{P}(1,3,\tau_{\epsilon}+1;-\tau_{\epsilon}+i,-\tau_{\epsilon}-i)$.\\
(4) $(p,q,r)=(4,6,10)$. On a\\ $\mathcal{P}(G_{4})=\mathcal{P}(2,3,\tau_{\epsilon}+1;(-\tau_{\epsilon}+2)+i(\tau_{\epsilon}-2),(-\tau_{\epsilon}+2)-i(\tau_{\epsilon}-2))$.\\
(5) $(p,q,r)=(4,3,10)$. On a $\mathcal{P}(G_{5})=\mathcal{P}(2,1,\tau_{\epsilon}+1;-\tau_{\epsilon}-i(\tau_{\epsilon}-2),-\tau_{\epsilon}+i(\tau_{\epsilon}-2))$.\\
(6) $(p,q,r)=(4,3,5)$. On a $\mathcal{P}(G_{6})=\mathcal{P}(2,1,\tau_{\epsilon};(1-\tau_{\epsilon})(1-i),(1-\tau_{\epsilon})(1+i)$.\\
(7) $(p,q,r)=(4,6,5)$. On a\\ $\mathcal{P}(G_{7})=\mathcal{P}(2,3,\tau_{\epsilon};(-\tau_{\epsilon}+1)+i(\tau_{\epsilon}-1),(-\tau_{\epsilon}+1)-i(\tau_{\epsilon}-1))$.\\
(8) $(p,q,r)=(5,5^{*},3)$. On a $\mathcal{P}(G_{8})=\mathcal{P}(\tau_{\epsilon},3-\tau_{\epsilon},1;i,-i)$.\\
(9) $(p,q,r)=(10,10^{*},3)$. On a $\mathcal{P}(G_{9})=\mathcal{P}(\tau_{\epsilon}+1,\tau_{-\epsilon}+1,1;-2+i,-2-i)$.\\
(10) $(p,q,r)=(5,10,6)$. On a $\mathcal{P}(G_{10})=\mathcal{P}(\tau_{\epsilon},\tau_{\epsilon}+1,3;-2\tau_{\epsilon}+i,-2\tau_{\epsilon}-i)$.\\
(11) $(p,q,r)=(5,5,6)$. On a $\mathcal{P}(G_{11})=\mathcal{P}(\tau_{\epsilon},\tau_{\epsilon},3;1-2\tau_{\epsilon}+i(\tau_{\epsilon}-1),1-2\tau_{\epsilon}-i(\tau_{\epsilon}-1))$.\\
(12) $(p,q,r)=(10,10,6)$. On a\\ $\mathcal{P}(G_{12})=\mathcal{P}(\tau_{\epsilon}+1,\tau_{\epsilon}+1,3;-(1+2\tau_{\epsilon})+i(\tau_{\epsilon}-2),-(1+2\tau_{\epsilon})-i(\tau_{\epsilon}-2))$.\\
(13) $(p,q,r)=(5,10,3)$. On a $\mathcal{P}(G_{13})=\mathcal{P}(\tau_{\epsilon},4-\tau_{\epsilon},1;-1+i(\tau_{\epsilon}-1),-1-i(\tau_{\epsilon}-i))$.\\
(14) $(p,q,r)=(5,5^{*},4)$. On a $\mathcal{P}(G_{14})=\mathcal{P}(\tau_{\epsilon},3-\tau_{\epsilon},2;-1+i,-1-i)$.\\
(15) $(p,q,r)=(10,10^{*},4)$. On a $\mathcal{P}(G_{15})=\mathcal{P}(\tau_{\epsilon}+1,4-\tau_{\epsilon},2;-3+i,-3-i)$.\\
(16) $(p,q,r)=(5,10,4)$. On a $\mathcal{P}(G_{16})=\mathcal{P}(\tau_{\epsilon},\tau_{\epsilon}+1,2;1-2\tau_{\epsilon}+i,1-2\tau_{\epsilon}-i)$.\\
(17) $(p,q,r)=(10,10,10)$. On a\\ $\mathcal{P}(G_{17})=\mathcal{P}(\tau_{\epsilon}+1,\tau_{\epsilon}+1,\tau_{\epsilon}+1;1-3\tau_{\epsilon}+i(\tau_{\epsilon}-2),1-3\tau_{\epsilon}-i(\tau_{\epsilon}-2))$.\\
(18) $(p,q,r)=(5,5,10)$. On a $\mathcal{P}(G_{18})=\mathcal{P}(\tau_{\epsilon},\tau_{\epsilon},4-\tau_{\epsilon};-\tau_{\epsilon}+i(\tau_{\epsilon}-1),-\tau_{\epsilon}-i(\tau_{\epsilon}-1))$.\\
Ces dix huit groupes sont isomorphes à un groupes noté $G$.
\end{proposition}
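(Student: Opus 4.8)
The plan is to follow exactly the strategy already used for $G_{12}$ (the proof that the five $A_i$ are isomorphic) and for $G_{13}$ (the proof that the four $D_i$ are isomorphic): rather than comparing all eighteen presentations directly, I would build a connected web of pairwise isomorphisms among the $G_i$ and then conclude by transitivity of $\simeq$. Each single link rests on the same elementary device used there. Inside a fixed $G_i$, realized as the image of $W(p,q,r)$ with generators $s_1,s_2,s_3$, I replace one or two of these generators either by a conjugate $s_j^{w}$ or by a product $s_j z$, where $z=(s_as_b)^{r/2}$ is the relevant central involution (equivalently $z_k\in\mathcal{Z}'$ centralizing the two other generators). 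I then check, using span identities such as $\langle s_2,s_3\rangle=\langle s_2(s_2s_3)^{n},s_3(s_2s_3)^{n}\rangle$, that the new triple still generates all of $G_i$, and I compute the relevant Coxeter numbers to read off the new triple $(p',q',r')$, which turns out to be one of the eighteen, giving $G_i\simeq G_j$.

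First I would record the dictionary between Coxeter numbers and orders: a value $C(\cdot,\cdot)$ equal to $1$, $2$, $3$, $\tau_\epsilon$ or $4-\tau_\epsilon$ corresponds to a product of order $3$, $4$, $6$, $5$ or $10$ respectively, these being the values $4\cos^2(k\pi/n)$. With this in hand the two families of links are immediate. The doubling links, of the form $(3,3,\cdot)\leftrightarrow(6,6,\cdot)$ or $(5,5,\cdot)\leftrightarrow(10,10,\cdot)$, come from $C(s_1,s_2z)=4-\alpha$; the conjugation links come from the explicit formula $C(s_1,s_2^{s_3})=\alpha+\beta\gamma+\alpha l+\beta m$ and its symmetric analogues, in which $\Delta=0$ has already fixed $\alpha l+\beta m$. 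Since each such $C$ is an explicit polynomial in $\alpha,\beta,\gamma,l,m$, every one of these computations is routine and lands on a triple already in the list. I would therefore tabulate, for each of the eighteen triples, one explicit replacement connecting it to a previously treated triple, exactly as was done step by step for the five $A_i$ and the four $D_i$.

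The sign $\epsilon$ is treated separately and uniformly, as in the preceding subsections where the isomorphisms $A_i^{+}\simeq A_i^{-}$, $B_i^{+}\simeq B_i^{-}$ and $C_i^{+}\simeq C_i^{-}$ were obtained. For a given triple I pass to a splitting field $\tilde{K}$ of the relevant quartic polynomial (each of which has dihedral Galois group of order $8$, as noted above), choose $\theta\in\mathcal{G}(\tilde{K}/\mathbb{Q})$ with $\theta(\sqrt{5})=-\sqrt{5}$ and $\theta(i)=i$, and build the $\theta$-semilinear map $\sigma:\tilde{M}\to\tilde{M}$ sending the basis adapted to $G_i^{+}$ onto the basis adapted to $G_i^{-}$. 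Verifying that $\theta$ carries the parameters $(\alpha l,\beta m)$ of $G_i^{+}$ to those of $G_i^{-}$ then yields $\sigma G_i^{+}\sigma^{-1}=G_i^{-}$, so both signs give isomorphic groups and $\epsilon$ need not be carried through the web.

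The main obstacle is organizational rather than conceptual. With eighteen triples, and the doubled $\epsilon$-variants, the bookkeeping of which single replacement connects which pair is delicate, and one must ensure that the resulting graph of isomorphisms is connected, so that no triple remains isolated. I would guard against this by fixing a hub, for instance $G_4$ coming from $(4,6,10)$, from which conjugates of $s_1,s_2,s_3$ expose Coxeter numbers of every relevant order, and by checking that each remaining triple links to it in one or two steps. The only other point demanding care is confirming, at each replacement, that the new reflections generate the entire group and not a proper subgroup; this is precisely the role of the span identities $\langle s_a,s_b\rangle=\langle s_aw,s_bw\rangle$, which must be invoked explicitly at every link.
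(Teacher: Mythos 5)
Your proposal follows essentially the same route as the paper: the paper's proof also fixes a hub (it uses $G_{8}$ rather than your $G_{4}$), replaces generators by conjugates such as $s_{1}^{s_{3}}$ or by products with central elements, computes the resulting Coxeter numbers $C(\cdot,\cdot)$ to identify the new triple $(p,q,r)$ among the eighteen, invokes the fact that $\langle s_{2},s_{3}\rangle$ is generated by any two of its involutions to guarantee the new triple still generates, and handles the $\epsilon$-variants by the $\theta$-semilinear Galois-twist argument of the earlier subsections, concluding by transitivity over a connected web of isomorphisms. The only differences are cosmetic (choice of hub and the order in which the links are listed), so your plan is a faithful reconstruction of the paper's argument.
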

\begin{proof}
considérons d'abord le groupe noté $G_{8}$. Nous avons $\alpha l+\beta m=0$ donc $C(s_{2},s_{1}^{s_{3}})=3$, $C(s_{2},s_{3}^{s_{1}})=2$ et $C(s_{2},s_{1}^{s_{3}s_{1}})=4-\tau_{\epsilon}$. Comme $<s_{2},s_{3}>$ est engendré par deux quelconques de ses éléments d'ordre $2$, nous obtenons
\begin{itemize}
  \item $G_{8}=<s_{1},s_{2},s_{1}^{s_{3}}>$ avec $(p,q,r)=(5,5,6)$, donc $G_{8}\simeq G_{11}$,\\
  \item $G_{8}=<s_{3}^{s_{1}},s_{2},s_{3}>$ avec $(p,q,r)=(4,5,3)$, donc$G_{8}\simeq G_{6}$;
\end{itemize}
de même $G_{8}$ est isomorphe à $G_{7}$, $G_{13}$, $G_{18}$, $G_{10}$, et $G_{16}$. En procédant comme dans le 4.1.1, nous voyons que $G_{8}$ est isomorphe à $G_{12}$ puis à $G_{9}$. De la même manière $G_{8}$ est isomorphe à $G_{14}$, $G_{15}$ et $G_{17}$. Si nous partons de $G_{1}$, nous obtenons $G_{2}$ et $G_{13}$, donc $G_{8}$ est isomorphe à $G_{1}$, et $G_{2}$. Si nous partons de $G_{3}$, avec $<s_{1}(s_{1}s_{3})^{3},s_{2},s_{3}(s_{1}s_{3})^{3}>$, nous obtenons $G_{2}$. Si nous partons de $G_{4}$, nous obtenons $G_{5}$ et $G_{6}$. Il en résulte que pour $1\leqslant i,j \leqslant 18$ les groupes $G_{i}$ et $G_{j}$ sont isomorphes.
\end{proof}
Nous donnons maintenant des ``présentations'' du groupe $G$.
\begin{proposition}
On garde les hypothèses et notations de la proposition précédente. On donne des relations qui permettent d'obtenir chaque $G_{i}$ lorsque l'on applique la construction fondamentale au groupe de Coxeter correspondant.

\begin{itemize}
  \item $G_{1}: W(3,3,10), C(s_{1},s_{2}^{s_{3}s_{2}})=2\Leftrightarrow l+m=4-2\gamma\Leftrightarrow\Delta(G_{1})=0$.
  \item $G_{2}: W(6,6,10), C(s_{1},s_{2}^{s_{3}s_{2}})=2\Leftrightarrow 3(l+m)=4-2\gamma\Leftrightarrow\Delta(G_{2})=0$.
  \item $G_{3}: W(3,6,5), C(s_{1},s_{3}^{s_{2}s_{3}})=2\Leftrightarrow (l+3m)=-2\gamma\Leftrightarrow\Delta(G_{3})=0$.
  \item $G_{4}: W(4,6,10), C(s_{3},s_{1}^{s_{2}})=1\Leftrightarrow (2l+3m)=-2(\gamma+1)\Leftrightarrow\Delta(G_{4})=0$.
  \item $G_{5}: W(4,3,10), C(s_{3},s_{1}^{s_{2}})=3\Leftrightarrow (2l+m)=2(1-\gamma)\Leftrightarrow\Delta(G_{5})=0$.
  \item $G_{6}: W(4,3,5), C(s_{3},s_{1}^{s_{2}})=3\Leftrightarrow (2l+m)=2(1-\gamma)\Leftrightarrow\Delta(G_{6})=0$.
  \item $G_{7}: W(4,6,5), C(s_{3},s_{1}^{s_{2}})=3\Leftrightarrow (2l+3m)=-2(1+\gamma)\Leftrightarrow\Delta(G_{2})=0$.
\end{itemize}
\begin{itemize}
  \item $G_{8}: W(5,5,3), (C(s_{3},s_{1}^{s_{2}})=3\;\text{et} \;C(s_{3},s_{2}^{s_{1}})=2)\Leftrightarrow$\\
  $(\Delta(G_{8})=0\;\text{et}\; \alpha\neq\beta)\Longrightarrow\alpha l+\beta m=0$.
  \item $G_{9}: W(10,10,3), (C(s_{3},s_{1}^{s_{2}})=1\;\text{et} \;C(s_{3},s_{2}^{s_{1}})=2)\Leftrightarrow$\\
  $(\Delta(G_{9})=0\;\text{et}\; \alpha\neq\beta)\Longrightarrow\alpha l+\beta m=-4$.
  \item $G_{10}: W(5,10,6), (C(s_{1},s_{2}^{s_{3}})=3\;\text{et} \;C(s_{1},s_{3}^{s_{2}})=1)\Leftrightarrow$\\
  $(\Delta(G_{10})=0\;\text{et}\; \alpha+1=\beta)\Longrightarrow\alpha l+\beta m=-4\alpha$.
  \item $G_{11}: W(5,5,6), (C(s_{1},s_{2}^{s_{3}})=2\;\text{et} \;C(s_{1},s_{3}^{s_{2}})=2)\Leftrightarrow$\\
  $(\Delta(G_{11})=0\;\text{et}\; \alpha=\beta)\Longrightarrow\alpha l+\beta m=2-4\alpha$.
  \item $G_{12}: W(5,5,3), (C(s_{1},s_{2}^{s_{3}})=2\;\text{et} \;C(s_{1},s_{3}^{s_{2}})=2)\Leftrightarrow$\\
  $(\Delta(G_{12})=0\;\text{et}\; \alpha=\beta)\Longrightarrow\alpha l+\beta m=-2$.
  \item $G_{13}: W(5,10,3), (C(s_{1},s_{2}^{s_{3}})=2\;\text{et} \;C(s_{2},s_{1}^{s_{3}s_{1}})=1)\Leftrightarrow$\\
  $(\Delta(G_{13})=0\;\text{et}\; \beta=4-\alpha)\Longrightarrow\alpha l+\beta m=-2$.
  \item $G_{14}: W(5,5,4), (C(s_{2},s_{3}^{s_{1}})=1\;\text{et} \;C(s_{2},s_{1}^{s_{3}s_{1}})=3)\Leftrightarrow$\\
  $(\Delta(G_{14})=0\;\text{et}\; \beta=3-\alpha)\Longrightarrow\alpha l+\beta m=-2$.
  \item $G_{15}: W(10,10,4), (C(s_{2},s_{3}^{s_{1}})=1\;\text{et} \;C(s_{2},s_{1}^{s_{3}s_{1}})=1)\Leftrightarrow$\\
  $(\Delta(G_{15})=0\;\text{et}\; \beta=5-\alpha)\Longrightarrow\alpha l+\beta m=-6$.
  \item $G_{16}: W(5,10,4), (C(s_{2},s_{3}^{s_{1}s_{3}})=1\;\text{et} \;C(s_{3},s_{1}^{s_{2}s_{1}})=1)\Leftrightarrow$\\
  $(\Delta(G_{16})=0\;\text{et}\; \beta=\alpha+1)\Longrightarrow\alpha l+\beta m=2-4\alpha$.
  \item $G_{17}: W(10,10,10), (C(s_{1},s_{2}^{s_{3}s_{2}})=2\;\text{et} \;C(s_{1},s_{2}^{s_{3}})=3)\Leftrightarrow$\\
  $(\Delta(G_{17})=0\;\text{et}\; \alpha=\beta=\gamma)\Longrightarrow\alpha l+\beta m=8-6\alpha$.
\end{itemize}
\begin{itemize}
  \item $G_{18}: W(5,5,10), (C(s_{1},s_{2}^{s_{3}})=1\;\text{et} \;C(s_{1},s_{2}^{s_{3}s_{2}})=2\;\text{et}\;C(s_{1},s_{3}^{s_{2}s_{3}})=2)\Leftrightarrow$\\
  $(\Delta(G_{18})=0\;\text{et}\;\beta=\alpha\;\text{et}\;\gamma=4-\alpha)\Longrightarrow\alpha l+\beta m=-2\alpha$.
\end{itemize}
\end{proposition}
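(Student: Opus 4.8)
Le plan est de ramener chacune des dix-huit lignes à deux ingrédients universels, déjà établis dans les parties précédentes de la série et réutilisés dans les propositions 14 à 17 et 19 à 21. D'une part la formule du discriminant
\[
\Delta = 8 - 2(\alpha+\beta+\gamma) - (\alpha l + \beta m),
\]
qui donne l'équivalence $\Delta=0 \Leftrightarrow \alpha l + \beta m = 8-2(\alpha+\beta+\gamma)$. D'autre part le dictionnaire reliant la valeur de $C(s_j,s_k^{w})$ à l'ordre de $s_j s_k^{w}$, à savoir $C=4\cos^2(\pi/n)=2+2\cos(2\pi/n)$, d'où $C=1$ pour l'ordre $3$, $C=2$ pour l'ordre $4$, $C=3$ pour l'ordre $6$, $C=\tau_\epsilon$ pour l'ordre $5$ et $C=\tau_\epsilon+1$ pour l'ordre $10$. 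Ces deux ingrédients permettent de transformer chaque énoncé sur un ordre en une identité algébrique en $\alpha,\beta,\gamma,l,m$.

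D'abord je traiterais les lignes $G_1$ à $G_7$, où une seule condition sur un $C$ intervient. Pour chacune, j'exprimerais $C(s_j,s_k^{w})$ comme polynôme universel en $\alpha,\beta,\gamma,l,m$ au moyen des formules de conjugaison de \cite{Z5} (par exemple $C(s_1,s_2^{s_3})=\alpha+\beta\gamma+\alpha l+\beta m$ et $C(s_1,s_2^{s_3s_2})=\beta\gamma+\alpha(\gamma-1)^2+(\gamma-1)(\alpha l+\beta m)$ comme dans la proposition 23), puis je substituerais les valeurs (entières ou quadratiques) de $\alpha,\beta,\gamma$ fixées par le triple $(p,q,r)$. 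En posant $C$ égal à la cible et en simplifiant à l'aide de $v_p(\alpha)=v_q(\beta)=v_r(\gamma)=0$, on obtient une relation linéaire en $l,m$, dont on constate qu'elle coïncide avec $\alpha l+\beta m=8-2(\alpha+\beta+\gamma)$, c'est-à-dire avec $\Delta=0$. L'implication finale donnant la valeur de $\alpha l+\beta m$ est alors immédiate : il suffit de reporter les valeurs numériques de $\alpha,\beta,\gamma$ dans ce second membre.

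Ensuite viendraient les lignes $G_8$ à $G_{18}$, où deux conditions sur des $C$ sont imposées simultanément. Ici la difficulté vient de ce qu'un triple avec $p=q$ (cas $(5,5,3)$, $(5,5,6)$, $(10,10,10)$, etc.) ne détermine pas si $\alpha=\beta$ ou $\alpha\neq\beta$ : il faut montrer que les deux conditions jointes forcent à la fois $\Delta=0$ et la relation annoncée entre les racines ($\alpha=\beta$, $\alpha\neq\beta$, $\beta=\alpha+1$, $\beta=4-\alpha$, …). Le procédé est exactement celui des propositions 14, 19 et 20 : on écarte d'abord le cas dégénéré où l'un des $s_j s_k^{w}$ serait d'ordre $2$ (impossible, soit parce que $\mathcal{A}$ est une base de $M$, soit parce que cela contraindrait $l=m=-1$ avec $\alpha=\beta$, ce qui rend $\Delta\neq0$), puis on résout le système des deux équations en $C$ ; la compatibilité n'a lieu que sous $\Delta=0$ accompagné de la contrainte voulue sur les racines (par exemple, pour $G_8$, les deux équations forcent $\alpha+\beta=3$, donc $\alpha\neq\beta$ parmi les racines de $v_5$). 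On conclut de nouveau par $\alpha l+\beta m=8-2(\alpha+\beta+\gamma)$.

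La partie la plus délicate sera le calcul des $C(s_j,s_k^{w})$ pour les mots de conjugaison longs (du type $s_3s_2$, $s_1^{s_3}$, $s_1^{s_3s_1}$, $s_2^{s_3s_2}$) ainsi que l'analyse de compatibilité des cas dégénérés de $G_8$ à $G_{18}$ ; en revanche, une fois ces expressions polynomiales disponibles, chaque vérification se réduit à une substitution des racines de $v_p,v_q,v_r$ et à une simplification algébrique, strictement parallèle aux démonstrations déjà faites pour $G_{12}$ et $G_{13}$. C'est pourquoi la rédaction pourra se contenter de détailler un ou deux cas représentatifs (un cas à condition simple et un cas dégénéré), les autres étant \emph{semblables}.
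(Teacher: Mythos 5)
Your proposal follows essentially the same route as the paper's proof: write each $C(s_j,s_k^{w})$ as a polynomial in $\alpha,\beta,\gamma,l,m$, reduce modulo the relations $v_p(\alpha)=v_q(\beta)=v_r(\gamma)=0$ to the identity $\alpha l+\beta m=8-2(\alpha+\beta+\gamma)$ (i.e. $\Delta=0$), solve the joint system of $C$-equations in the multi-condition cases $G_{8}$ to $G_{18}$ to force the stated constraint on the roots, and only write out representative cases (the paper details $G_{1}$, $G_{8}$ and $G_{18}$, covering one, two and three simultaneous conditions). The plan and its key computations are correct and match the paper's argument.
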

\begin{proof}
Nous ne montrons le résultat que pour $G_{1}$, $G_{8}$ et $G_{18}$, les autres cas ayant des démonstrations semblables.

1) Pour $G_{1}$, nous avons $\alpha=\beta=1$, $\gamma=lm=\tau_{\epsilon}+1$, $\Delta(G_{1})=4-2\gamma-(l+m)$ donc $\Delta(G_{1})=0\Leftrightarrow l+m=4-2\gamma$. Nous avons $C(s_{1},s_{2}^{s_{3}s_{2}})=\gamma+(\gamma-1)^{2}+(\gamma-1)(l+m)=(\gamma-1)(4+l+m)$ donc
\[
 C(s_{1},s_{2}^{s_{3}s_{2}})=2\Leftrightarrow 4+l+m=2(4-\gamma)\Leftrightarrow l+m=4-2\gamma
 \]
 car $(\gamma -1)(4-\gamma)=1$. Ceci montre le résultat dans ce cas.
 
 2) Pour $G_{8}$, nous avons $\alpha^{2}=3\alpha-1$, $\beta^{2}=3\beta-1$, $\gamma=lm=1$;\\ $C(s_{3},s_{1}^{s_{2}})=3=\alpha+\beta+(\alpha l+\beta m)$, $C(s_{3},s_{2}^{s_{1}})=2=1+\alpha\beta+(\alpha l+\beta m)$\\
 donc $(C(s_{3},s_{1}^{s_{2}})=3\; \text{et}\; C(s_{3},s_{2}^{s_{1}})=2)\Longrightarrow 2=\alpha+\beta-\alpha\beta$. Comme $\alpha$ et $\beta$ sont les racines de $v_{5}(X)$, cette égalité n'est possible que si $\beta\neq\alpha$ donc $\alpha+\beta=3$ et $\alpha l+\beta m=0$. Dans ces conditions $\Delta(G_{8}=8-2(\alpha+\beta)-2\gamma-(\alpha l+\beta m)=0$: $(C(s_{3},s_{1}^{s_{2}})=3\; \text{et}\; C(s_{3},s_{2}^{s_{1}})=2)\Longrightarrow(\Delta(G_{8}=0\;\text{et}\;\alpha\neq\beta)$. La réciproque est claire.
 
 3) Pour $G_{18}$, nous avons $\alpha^{2}=3\alpha-1$, $\beta^{2}=3\beta-1$, $\gamma^{2}=5\gamma-5$. Considérons les trois équations suivantes:
 \begin{align}
\label{}
    C(s_{1},s_{2}^{s_{3}})&=&1&=&\alpha+\beta\gamma+(\alpha l+\beta m)   \\
    C(s_{1},s_{2}^{s_{3}s_{2}})&=& 2&=&\beta\gamma+\alpha(\gamma-1)^{2}+(\alpha l+\beta m)(\gamma-1)\\
    C(s_{1},s_{2}^{s_{3}s_{2}})&=& 2&=&\alpha\gamma+\beta(\gamma-1)^{2}+(\alpha l+\beta m)(\gamma-1)
\end{align}
Considérons (6)-(7):\\ $0=\gamma(\beta-\alpha)-(\gamma-1)^{2}(\beta-\alpha)=(\beta-\alpha)(-\gamma^{2}+3\gamma-1=(\beta-\alpha)(4-2\gamma)$.\\
Comme $\gamma\neq 2$, nous obtenons $\beta=\alpha$.\\
Considérons (6)-(1)($\gamma-1$):
$2-(\gamma-1)=\beta\gamma+\alpha(\gamma-1)^{2}-\alpha(\gamma-1)-\beta\gamma(\gamma-1)$\\
donc comme $\beta=\alpha$, $3-\gamma=\alpha(2-\gamma)$ et comme $(2-\gamma)(3-\gamma)=1$, nous avons $\alpha=(3-\gamma)^{2}=4-\gamma$ et $\gamma=4-\alpha$. dans ces conditions $1=\alpha+\alpha(4-\alpha)+(\alpha l+\beta m)$ et $\alpha l+\beta m=-2\alpha$. Un calcul simple montre qu'alors $\Delta(G_{18})=0$. Réciproquement si $\Delta(G_{18})=0$, $\beta=\alpha$ et $\gamma=4-\alpha$, on obtient sans difficultés les relations (5), (6) et (7).
\end{proof}
\begin{theorem}
On garde les hypothèses et notations précédentes. le groupe $G/N(G)$ est isomorphe au groupe de réflexion complexe $G_{22}$.
\end{theorem}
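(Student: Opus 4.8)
Le plan est de suivre la stratégie du théorème analogue sur $G_{13}$, en se ramenant à un seul modèle. La proposition précédente montrant que tous les $G_i$ sont isomorphes à $G$, je choisirais $G_6$, associé à $W(4,3,5)$ avec $\alpha=2$, $\beta=1$, $\gamma=\tau_\epsilon$ : c'est le plus commode, car les produits $s_1s_2$, $s_1s_3$, $s_2s_3$ y sont d'ordres respectifs $4$, $3$, $5$, exactement le motif de $H_3$ après passage au quotient. Dans $G'=G/N$ les images $s_i'$ restent des réflexions, donc $\det s_i'=-1$ sur $M'$ et chaque produit $s_i's_j'$ est dans $SL_2(M')$. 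Je poserais $z:=(s_1's_2')^2$ ; comme $s_1's_2'$ est d'ordre $4$ (valeurs propres $\pm i$), $z$ est d'ordre $2$ et appartient à $SL_2(M')$. Ce dernier ne possédant qu'un seul élément d'ordre $2$, à savoir $-I$ (comme au paragraphe 2.2), on a $z=-I$, élément central d'ordre $2$ de $G'$.

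Je considérerais ensuite $\bar G:=G'/\langle z\rangle$. Modulo $z$ on a $(\bar s_1\bar s_2)^2=1$, alors que $(\bar s_1\bar s_3)^3=1$ et $(\bar s_2\bar s_3)^5=1$ subsistent : $\bar G$ est donc engendré par trois involutions vérifiant les relations de Coxeter de $H_3$, c'est-à-dire un quotient de $W(H_3)$. Pour voir que c'est un isomorphisme, j'utiliserais la représentation de degré $2$ : l'image de $G'$ dans $PGL_2(\mathbb{C})$ est engendrée par trois involutions dont les produits deux à deux sont d'ordres $2$, $3$, $5$ ; contenant un élément d'ordre $5$, ce sous-groupe fini de $PGL_2(\mathbb{C})$ ne peut être que le groupe icosaédrique $A_5$, d'ordre $60$. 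Comme les seules racines de l'unité de $K=\mathbb{Q}(\sqrt5,i)$ sont $\{\pm1,\pm i\}$ et que le groupe des homothéties de $G'$ contient $iI$ (par exemple comme puissance de $s_1's_2's_3'$, comme dans le cas $\mathbb{Q}(\sqrt2,i)$), ce groupe des homothéties est $C_4=\langle iI\rangle$. D'où $|G'|=4\cdot 60=240$ et $|\bar G|=120=|W(H_3)|$, donc $\bar G\simeq W(H_3)$.

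Il reste à identifier l'extension centrale $1\to\langle z\rangle\to G'\to W(H_3)\to 1$. Comme $z=(s_1's_2')^2$ avec $s_1'$, $s_2'$ d'ordre $2$, l'image de $z$ dans l'abélianisé $(G')^{ab}$ est triviale, donc $z\in[G',G']$ et l'extension est non scindée (ce qui exclut $C_2\times W(H_3)$). Les réflexions $s_i'$ étant d'ordre $2$, on exclut l'extension $C_2\times SL_2(5)$, dans laquelle elles se relèveraient en éléments d'ordre $4$ ; et $s_1's_2'$ étant d'ordre $4$, on exclut $C_4\times A_5$, où son image serait d'ordre $2$. La seule extension restante est $(C_4\times SL_2(5))/\langle(-1,-I)\rangle=C_4\star\tilde{A}_5=G_{22}$, d'où $G'\simeq G_{22}$.

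La principale difficulté est cette identification exacte : établir que $\bar G$ est bien $W(H_3)$ tout entier et non un quotient propre, et que l'extension obtenue est précisément $G_{22}$ parmi les extensions centrales de $W(H_3)$ par $C_2$. C'est le comptage via $PGL_2(\mathbb{C})$, joint aux ordres des réflexions et de $s_1's_2'$, qui lève cette difficulté ; les vérifications d'ordres des produits se ramènent aux valeurs de $C$ déjà calculées et ne présentent pas de difficulté.
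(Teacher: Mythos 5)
Votre démonstration est correcte dans son principe et aboutit à la bonne identification, mais elle suit une route réellement différente de celle de l'article. L'article travaille avec $G_{4}$, quotient de $W(4,6,10)$, où les trois produits $s_{i}'s_{j}'$ sont d'ordre pair : l'unicité de l'involution de $SL_{2}(M')$ force alors simultanément $(s_{1}'s_{2}')^{2}=(s_{1}'s_{3}')^{3}=(s_{2}'s_{3}')^{5}=z$, ce qui est exactement la présentation du groupe binaire de l'icosaèdre ; comme toutes les réflexions sont conjuguées, $D(G')$ est d'indice $2$ et $D(G')\simeq\tilde{A}_{5}$ s'obtient directement, puis $(t')^{5}=-i\,id_{M'}$, scalaire central de déterminant $-1$ donc hors de $D(G')$, donne $G'=D(G')\cdot\langle (t')^{5}\rangle=C_{4}\star\tilde{A}_{5}=G_{22}$ sans aucun théorème de classification. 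Vous, en choisissant $G_{6}$, quotient de $W(4,3,5)$, n'avez qu'un seul produit d'ordre pair ; vous en tirez $z=(s_{1}'s_{2}')^{2}=-I$, passez au quotient $\bar{G}=G'/\langle z\rangle$ de $W(H_{3})$, établissez $|G'|=240$ par comptage (image $A_{5}$ dans $PGL_{2}(\mathbb{C})$, homothéties $C_{4}$), d'où $\bar{G}\simeq W(H_{3})$, et concluez par élimination parmi les extensions centrales de $W(H_{3})$ par $C_{2}$. Votre variante fournit en prime l'ordre de $G'$ et le lien explicite avec la description de $G_{22}$ comme extension centrale non scindée de $W(H_{3})$ rappelée en tête de la section 4, mais elle consomme deux boîtes noires --- la classification des sous-groupes finis de $PGL_{2}(\mathbb{C})$ et la liste des quatre extensions centrales de $W(H_{3})$ par $C_{2}$ (multiplicateur de Schur de $A_{5}$ et Künneth) --- là où l'argument de l'article est autonome.

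Deux points de votre rédaction demandent à être consolidés, mais ils se réparent. (i) La détermination des homothéties est circulaire telle quelle : affirmer qu'un scalaire $\lambda I\in G'$ est une racine de l'unité de $K$ suppose déjà $G'$ fini, ce que le comptage est précisément censé établir. La réparation est immédiate : tout élément de $G'$ est produit de réflexions, donc de déterminant $\pm1$ ; pour un scalaire, $\lambda^{2}=\det(\lambda I)=\pm1$, donc $\lambda\in\{\pm1,\pm i\}$, et avec $(t')^{5}=\pm i\,id_{M'}$ (calcul que vous invoquez par analogie avec le cas $\mathbb{Q}(\sqrt{2},i)$ et qu'il faut effectivement faire pour $G_{6}$) le groupe des homothéties est bien $\langle iI\rangle$. (ii) L'argument selon lequel un sous-groupe fini de $PGL_{2}(\mathbb{C})$ contenant un élément d'ordre $5$ est nécessairement $A_{5}$ est trop rapide : les sous-groupes cycliques et diédraux en contiennent aussi. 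Ce qui vous sauve est que cette image est un quotient de $W(H_{3})=A_{5}\times C_{2}$, dont les seuls quotients possédant un élément d'ordre $5$ sont $A_{5}\times C_{2}$ et $A_{5}$, et seul $A_{5}$ se plonge dans $PGL_{2}(\mathbb{C})$ ; il faut au passage vérifier que les ordres $3$ et $5$ de $s_{1}'s_{3}'$ et $s_{2}'s_{3}'$ ne chutent pas dans $G'$ (traces $\beta-2$ et $\gamma-2$, ou absence de torsion dans $N$). Enfin, une fois acquis $|G'|=240$ et $\bar{G}\simeq W(H_{3})$, vous pouvez éviter la classification des extensions : $D(G')$ est une extension centrale de $A_{5}$ par $\langle z\rangle$ contenue dans $SL_{2}(M')$, donc à involution unique, donc non isomorphe à $C_{2}\times A_{5}$, donc isomorphe à $\tilde{A}_{5}$ ; comme $iI\notin D(G')$ (déterminant $-1$), on retrouve $G'=D(G')\cdot\langle iI\rangle=C_{4}\star\tilde{A}_{5}$, c'est-à-dire l'argument de l'article.
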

\begin{proof}
On pose $t:=s_{1}s_{2}s_{3}$ et on appelle $t'$ l'image de $t$ dans $G_{j}'$ $(1\leqslant j \leqslant 18)$. On a:
\begin{itemize}
  \item $(t')^{3}=i id$ si $j\in \{1,2,3,8,9,10,14,15,16\}$\\
  \item $(t')^{5}=\pm i id$ si $j\in \{4,5,6,7,11,12,13,17,18\}$.
 \end{itemize}
 On peut remarquer que dans le premier cas le polynôme caractéristique de $t'$ est $P_{t'}(X)=X^{2}-iX-1$ et dans le second cas $P_{t'}(X)=X^{2}--i(\gamma-3)X-1$ ou $P_{t'}(X)=X^{2}--i(\gamma-2)X-1$.
 
 Maintenant on va montrer le résultat pour le groupe $G_{4}$ qui est un quotient du groupe de Coxeter de rang $3$, $W(4,6,10)$. On sait que $N(G_{4})\neq\{1\}$et l'on pose $G_{4}'=G_{4}/N(G_{4})$. On appelle $g'$ l'image de $g$ de $G_{4}$ dans $G_{4}'$. Comme $G_{4}$ est isomorphe à $G_{1}$, toutes ses réflexions sont conjuguées, donc $D(G_{4})=G_{4}^{+}$ est d'indice $2$ dans $G_{4}$. On a donc aussi $D(G_{4}')=(G_{4}')^{+}$ d'indice $2$ dans $G_{4}'$. Maintenant $G_{4}'$ opère fidèlement sur $M'$ qui est de dimension $2$. Comme $SL_{2}(M')$ possède un unique élément d'ordre $2$ qui est central, on a:
 \[
 (s_{1}'s_{2}')^{2}=(s_{1}'s_{3}')^{3}=(s_{2}'s_{3}')^{5}=z
  \]
  donc $D(G_{4}')$ est isomorphe au groupe binaire de l'icosaèdre. De plus $(t')^{5}=(s_{1}'s_{2}'s_{3}')^{5}=-iid_{M'}$, donc $(t')^{5}$ est central dans $G_{4}'$ et $\det (t')^{5}=-1$: $(t')^{5}\notin D(G_{4}')$. On obtient ainsi le résultat: $G$ est isomorphe au groupe de réflexion complexe $G_{22}$.
  \end{proof}
  \begin{proposition}
Des présentations de $G_{22}$ sont les suivantes (on garde les notations $G_{i}$ $(1\leqslant i \leqslant 18)$ et $t=s_{1}s_{2}s_{3}$):
Nous donnons successivement les $18$ présentations.
\begin{itemize}
  \item $G_{1}$: $(w(3,3,10),(s_{2}s_{3})^{5}=(s_{1}s_{2}^{s_{3}s_{2}})^{2}=t^{6},t^{3}=s_{1}t^{3}s_{1}=s_{3}t^{3}s_{3})$;
  \item $G_{2}$: $(w(6,6,10),(s_{1}s_{2})^{3}=(s_{1}s_{3})^{3}=(s_{2}s_{3})^{5}=t^{6},t^{3}=s_{1}t^{3}s_{1}=s_{3}t^{3}s_{3})$;
  \item $G_{3}$: $(w(3,6,5),(s_{1}s_{3})^{3}=(s_{1}s_{2}^{s_{3}s_{2}})^{2}=t^{6},t^{3}=s_{1}t^{3}s_{1}=s_{3}t^{3}s_{3})$;
  \item $G_{4}$: $(w(4,6,10),(s_{1}s_{2})^{2}=(s_{1}s_{3})^{3}=(s_{2}s_{3})^{5}=t^{10},t^{5}=s_{1}t^{5}s_{1}=s_{3}t^{5}s_{3})$;
  \item $G_{5}$: $(w(4,3,10),(s_{1}s_{2})^{2}=(s_{2}s_{3})^{5}=t^{10},t^{5}=s_{1}t^{5}s_{1}=s_{3}t^{5}s_{3})$;
  \item $G_{6}$: $(w(4,3,5),(s_{1}s_{2})^{2}=(s_{3}s_{1}^{s_{2}})^{3}=t^{10},t^{5}=s_{1}t^{5}s_{1}=s_{3}t^{5}s_{3})$;
  \item $G_{7}$: $(w(4,6,5),(s_{1}s_{2})^{2}=(s_{1}s_{3})^{3}=t^{10},t^{5}=s_{1}t^{5}s_{1}=s_{3}t^{5}s_{3})$;
  \item $G_{8}$: $(w(5,5,3),(s_{3}s_{1}^{s_{2}})^{3}=(s_{3}s_{2}^{s_{1}})^{2}=t^{6},t^{3}=s_{1}t^{3}s_{1}=s_{3}t^{3}s_{3})$;
  \item $G_{9}$: $(w(10,10,3),(s_{3}s_{1}^{s_{2}})^{3}=1,(s_{1}s_{2})^{5}=(s_{1}s_{3})^{5}=t^{6},t^{3}=s_{1}t^{3}s_{1}=s_{3}t^{3}s_{3})$;
  \item $G_{10}$: $(w(5,6,10),(s_{2}s_{3})^{5}=(s_{1}s_{3})^{3}=(s_{3}s_{2}^{s_{1}})^{2}=t^{6},t^{3}=s_{1}t^{3}s_{1}=s_{3}t^{3}s_{3})$;
  \item $G_{11}$: $(w(5,5,6),(s_{1}s_{2}^{s_{3}})^{2}=(s_{1}s_{3}^{s_{2}})^{3}=t^{10},t^{5}=s_{1}t^{5}s_{1}=s_{3}t^{5}s_{3})$;
  \item $G_{12}$: $(w(10,10,6),(s_{1}s_{2})^{5}=(s_{1}s_{3})^{5}=(s_{2}s_{3})^{3}=(s_{1}s_{2}^{s_{3}})^{2}=t^{10},t^{5}=s_{1}t^{5}s_{1}=s_{3}t^{5}s_{3})$;
  \item $G_{13}$: $(w(5,10,3),(s_{1}s_{3})^{5}=(s_{1}s_{2}^{s_{3}})^{2}=t^{10},t^{5}=s_{1}t^{5}s_{1}=s_{3}t^{5}s_{3})$;
  \item $G_{14}$: $(w(5,5,4),(s_{2}s_{3})^{2}=(s_{2}s_{1}^{s_{3}s_{1}})^{3}=t^{6},t^{3}=s_{1}t^{3}s_{1}=s_{3}t^{3}s_{3})$;
  \item $G_{15}$: $(w(10,10,4),(s_{2}s_{1}^{s_{3}})^{3}=1,(s_{1}s_{2})^{5}=(s_{1}s_{3})^{5}=(s_{2}s_{3})^{2}=t^{6},t^{3}=s_{1}t^{3}s_{1}=s_{3}t^{3}s_{3})$;
  \item $G_{16}$: $(w(5,10,4),(s_{2}s_{1}^{s_{3}})^{3}=1,(s_{2}s_{3})^{2}=(s_{1}s_{3})^{5}=t^{6},t^{3}=s_{1}t^{3}s_{1}=s_{3}t^{3}s_{3})$;
  \item $G_{17}$: $(w(10,10,10),(s_{1}s_{2})^{5}=(s_{2}s_{3})^{5}=(s_{3}s_{1})^{5}=(s_{1}s_{2}^{s_{3}})^{3}=(s_{1}s_{2}^{s_{3}s_{2}})^{2}=t^{10},t^{5}=s_{1}t^{5}s_{1}=s_{3}t^{5}s_{3})$;
  \item $G_{18}$: $(w(5,5,10),(s_{2}s_{3})^{5}=(s_{1}s_{2}^{s_{3}s_{2}})^{2}=(s_{1}s_{3}^{s_{2}s_{3}})^{2}=t^{10},t^{5}=s_{1}t^{5}s_{1}=s_{3}t^{5}s_{3})$;
\end{itemize}
\end{proposition}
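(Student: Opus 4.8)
The plan is to show that each of the eighteen lines defines a group $H$ isomorphic to $G_{22}$; since the eighteen presentations are assembled in exactly the same way, I would carry out one representative case in full --- for concreteness the case $G_{4}$, a quotient of $W(4,6,10)$, as in the preceding theorem --- and then note that the others follow verbatim after replacing the three relevant parabolic elements by their counterparts. Recall from the preceding propositions that all $G_{i}$ are isomorphic to a single group $G$, and from the preceding theorem that $G/N(G)\simeq G_{22}$; so it suffices to identify the abstractly presented $H$ with $G/N$.

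First I would produce a surjection $\varphi\colon H\twoheadrightarrow G_{22}$. Since $s_{1},s_{2},s_{3}$ are reflections generating $G_{22}=G/N$, it is enough to check that every defining relation of $H$ holds there. The Coxeter relations of $W(4,6,10)$ hold because $G$ is a quotient of that group; the identities $(s_{1}'s_{2}')^{2}=(s_{1}'s_{3}')^{3}=(s_{2}'s_{3}')^{5}=z$ together with the centrality of $z$ were already established in the proof that $G'\simeq G_{22}$; and from $t'^{5}=-i\,\mathrm{id}_{M'}$ one gets $t'^{10}=-\mathrm{id}_{M'}=z$, so $t'^{5}$ is scalar and $s_{a}'t'^{5}s_{a}'=t'^{5}$. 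Hence all relations are satisfied and $\varphi$ exists.

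For the reverse direction I would bound $|H|$. Put $z:=(s_{1}s_{2})^{2}$; the relation $(s_{1}s_{2})^{2}=(s_{1}s_{3})^{3}=(s_{2}s_{3})^{5}$ identifies $z$ with the central involution of each of the dihedral parabolics $\langle s_{1},s_{2}\rangle$, $\langle s_{1},s_{3}\rangle$, $\langle s_{2},s_{3}\rangle$, so $z$ commutes with $s_{1},s_{2},s_{3}$ and is central, while $z^{2}=(s_{1}s_{2})^{4}=1$ by the Coxeter relation $m_{12}=4$. In $\bar H:=H/\langle z\rangle$ the images satisfy $\bar s_{i}^{2}=1$ and $(\bar s_{1}\bar s_{2})^{2}=(\bar s_{1}\bar s_{3})^{3}=(\bar s_{2}\bar s_{3})^{5}=1$, which are exactly the Coxeter relations of type $H_{3}$ (node $\bar s_{3}$ central, $\bar s_{1},\bar s_{2}$ leaves); hence $\bar H$ is a quotient of $W(H_{3})$ and $|\bar H|\le 120$, so $|H|\le 240=|G_{22}|$. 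Together with $\varphi$ this forces $H\simeq G_{22}$, the non-split central extension of $W(H_{3})$ by $\langle z\rangle$ recalled in the Généralités. The relations $t^{5}=s_{1}t^{5}s_{1}=s_{3}t^{5}s_{3}$ serve to record that $t^{5}$ is scalar --- i.e. that the translation part from $N$ has been killed --- and to pin the central cyclic factor $C_{4}=\langle t^{5}\rangle$; in the cases $j\in\{1,2,3,8,9,10,14,15,16\}$ where $t'^{3}=i\,\mathrm{id}$ one uses $z=t^{6}$ instead, the argument being unchanged.

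I expect the main obstacle to be the uniform bookkeeping across the eighteen cases rather than any single deep step: for each line one must choose the three elements whose orders reduce to $2,3,5$ modulo $z$ and verify, using the $\Delta(G_{i})=0$ equivalences of the preceding proposition (e.g. $C(s_{3},s_{1}^{s_{2}})=1$ for $G_{4}$), that the quotient by $z$ really is $W(H_{3})$. Several presentations use conjugated reflections ($s_{2}^{s_{3}s_{2}}$, $s_{1}^{s_{3}}$, and so on), and a few ($G_{9},G_{15},G_{16}$) carry an extra relation such as $(s_{3}s_{1}^{s_{2}})^{3}=1$; there the first task is to check that the chosen conjugate generates the same dihedral parabolic, so that the order-counting argument goes through as before. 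Once this is arranged, each case closes by the same surjection-plus-order-bound argument established for $G_{4}$.
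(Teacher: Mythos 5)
Your proposal is correct and follows essentially the same route as the paper: the paper's own proof is a one-line reference to the $G_{4}$ case of Theorem 7 (the identification $G/N\simeq G_{22}$), and your surjection-plus-order-bound argument --- central $z$ of order $2$, quotient by $\langle z\rangle$ a quotient of $W(H_{3})$ of order at most $120$, hence $|H|\leqslant 240=|G_{22}|$ --- is precisely the mechanism the paper itself uses in the analogous presentation proof for $G_{13}$ (Theorem 6, with $W(B_{3})$ in place of $W(H_{3})$). If anything, your write-up is more explicit than the paper's, which defers all eighteen verifications, including the change-of-generators bookkeeping you flag for the cases involving conjugated reflections such as $G_{9}$, $G_{15}$, $G_{16}$, entirely to the reader.
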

\begin{proof}
Elle est semblable à celle que l'on obtient pour $G_{4}$ grâce au théorème 7.
\end{proof}
\begin{remark}
Les présentations précédentes ne sont peut-être pas les plus économiques.
\end{remark}
\begin{proposition}
On garde les hypothèses et notations précédentes. Alors:
\begin{enumerate}
  \item $N(G)$ est un $\mathbb{Z}[G]$-module libre de rang $8$.
  \item La suite $(\ast\ast)$ est non scindée.
\end{enumerate}
\end{proposition}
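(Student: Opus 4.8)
Le plan est de reprendre la méthode des démonstrations précédentes (cas de $G_{12}$ et de $G_{13}$): on se ramène à une réalisation commode, puis on travaille dans l'anneau des entiers $\mathcal{O}$ du corps $K=\mathbb{Q}(\sqrt{5},i)$, qui est de degré $4$ sur $\mathbb{Q}$ et principal. Les dix-huit groupes $G_{i}$ étant isomorphes (proposition précédente), je choisirais $G_{14}$, quotient affine de $W(5,5,4)$, pour lequel $\alpha=\tau_{\epsilon}$, $\beta=\tau_{-\epsilon}=3-\tau_{\epsilon}$, $\gamma=2$, $\alpha l=-1+i$ et $\beta m=-1-i$; le choix $\gamma=2$ donne $\frac{-2}{4-\gamma}=-1$ et simplifie les calculs comme pour $G_{12}$. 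On a de plus $\tau_{\epsilon}\tau_{-\epsilon}=1$, d'où $l=(-1+i)\tau_{-\epsilon}$ et $m=-(1+i)\tau_{\epsilon}$ sont des entiers algébriques.

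Pour le point~1, je montrerais d'abord que $N$ est un $\mathcal{O}$-module. Si $\zeta\in N$, alors $\alpha\zeta,\beta\zeta,\gamma\zeta,(\alpha l)\zeta,(\beta m)\zeta$ sont dans $N$; comme $N$ est stable par addition, la stabilité par $\alpha l=-1+i$ entraîne la stabilité par $i$, et celle par $\alpha=\tau_{\epsilon}$ entraîne la stabilité par $\frac{1+\sqrt{5}}{2}$. Donc $N$ est stable par multiplication par $\mathcal{O}=\mathbb{Z}[\frac{1+\sqrt{5}}{2},i]$. Ensuite, puisque $\gamma=2$, on a $(s_{1}z_{1})^{2}=\frac{-2}{4-\gamma}c_{1}=-c_{1}$, donc $c_{1}\in N$, et l'action de $s_{2}$ et $s_{3}$ sur $c_{1}$ fournit $c_{2}$ et $c_{3}$; les $c_{i}$ engendrant $N$ comme $G$-module (construction fondamentale), on obtient $N=\mathcal{O}c_{1}+\mathcal{O}c_{2}+\mathcal{O}c_{3}$. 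Ce $\mathcal{O}$-module est de type fini, sans torsion et de rang~$2$; $\mathcal{O}$ étant principal, $N$ est donc $\mathcal{O}$-libre de rang~$2$, c'est-à-dire un $\mathbb{Z}$-module libre de rang~$8$. En particulier $I_{1}=I_{2}=I_{3}=\mathcal{O}$.

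Pour le point~2, je suivrais l'argument d'obstruction des preuves précédentes. Un scindage de $(\star\star)$ imposerait des scalaires $\lambda_{j}\in I_{j}=\mathcal{O}$ vérifiant la relation $(\mathcal{E})$
\[
-1=(4-\gamma)\lambda_{1}+(l+2)\lambda_{2}+(m+2)\lambda_{3}.
\]
Or $4-\gamma=2$ est divisible par $1+i$, et de $l=(-1+i)\tau_{-\epsilon}$, $m=-(1+i)\tau_{\epsilon}$ on tire $l+2=i(1+i)(\tau_{-\epsilon}-1-i)$ et $m+2=-(1+i)(\tau_{\epsilon}-1+i)$, eux aussi divisibles par $1+i$ (on a d'ailleurs $\alpha l+2=1+i$ et $\beta m+2=1-i=-i(1+i)$, ce qui rend la conclusion indépendante de la normalisation exacte de $(\mathcal{E})$). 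Ainsi $1+i$, qui n'est pas inversible dans $\mathcal{O}$, divise le membre de droite de $(\mathcal{E})$ mais non le membre de gauche $-1$: c'est impossible, et la suite $(\star\star)$ est non scindée.

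L'obstacle principal est l'arithmétique explicite dans $\mathcal{O}$: il faut s'assurer que $\mathbb{Z}[\frac{1+\sqrt{5}}{2},i]$ est bien l'anneau des entiers de $K$ et qu'il est principal, puis que $N$ ne dépasse pas le $\mathcal{O}$-module engendré par les $c_{i}$, d'où $I_{j}=\mathcal{O}$. Le choix $\gamma=2$ rend ces vérifications brèves; pour une réalisation où $4-\gamma$ ne divise pas $2$ (par exemple $G_{8}$, où $\frac{-2}{4-\gamma}=\frac{-2}{3}$ et $c_{1}\notin N$), il faudrait d'abord déterminer précisément les idéaux $I_{j}$ à l'aide des inclusions $lI_{2}\subseteq I_{3}$ et $mI_{3}\subseteq I_{2}$, comme dans le calcul qui occupait l'essentiel de la preuve pour $G_{13}$; la présence simultanée de $\sqrt{5}$ et de $i$ (trois sous-corps quadratiques, $2$ se ramifiant dans deux d'entre eux) rend alors la gestion des factorisations le véritable point délicat.
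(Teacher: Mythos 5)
Votre proposition est correcte pour l'essentiel, mais elle suit une route réellement différente de celle du texte. Le texte travaille avec la forme $G_{1}$ (quotient de $W(3,3,10)$, $\gamma=\tau_{\epsilon}+1$): il tire $2\in I$ de $(s_{1}(s_{2}s_{3})^{5})^{2}$, utilise la conjugaison de toutes les réflexions pour avoir $I_{1}=I_{2}=I_{3}=I$, puis conclut que $I$, idéal principal \emph{propre} de $\mathbb{Z}[\tau,i]$ contenant $2$, est $(2)$ ou $(1+i)$; la non-scindabilité vient alors de ce que les $\lambda_{j}\in I$ sont tous multiples de $1+i$. Vous travaillez au contraire avec $G_{14}$ (quotient de $W(5,5,4)$, $\gamma=2$), où $\frac{-2}{4-\gamma}=-1$ donne directement $c_{1}\in N$ puis $c_{2},c_{3}\in N$ (le scalaire dans $s_{2}.c_{1}-c_{1}=\lambda c_{2}$ n'est plus $1$ comme pour $G_{12}$ mais une unité $\tau_{\pm\epsilon}$, ce qui suffit), et vous placez la divisibilité par $1+i$ non dans les $\lambda_{j}$ mais dans les coefficients $4-\gamma=2$, $l+2=i(1+i)(\tau_{-\epsilon}-1-i)$, $m+2=-(1+i)(\tau_{\epsilon}-1+i)$ de $(\mathcal{E})$ --- c'est le mécanisme du théorème 3 (cas de $G_{12}$) transposé à $G_{22}$. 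Ce choix achète quelque chose de réel: pour $G_{1}$ le coefficient $4-\gamma=\tau_{-\epsilon}$ est une unité, de sorte que l'argument du texte s'effondre si $I=\mathcal{O}$ (la relation $(\mathcal{E})$ admettrait alors la solution $\lambda_{1}=-\tau_{\epsilon}$, $\lambda_{2}=\lambda_{3}=0$); l'hypothèse que $I$ est propre, nulle part justifiée dans le texte, y est donc essentielle, tandis que votre argument n'exige que l'intégralité $I_{j}\subseteq\mathcal{O}$ et survit même à $I_{j}=\mathcal{O}$, ce qui est précisément ce que votre point 1 affirme. En contrepartie, cette intégralité repose chez vous sur l'affirmation que les $c_{i}$ engendrent $N$ sur $\mathcal{O}$, puis sur le fait --- que vous ne vérifiez pas --- que les relations linéaires entre les $c_{i}$ (qui sont liés) n'introduisent pas de dénominateurs; cela se vérifie ici parce que $l+2$ et $m+2$ font intervenir les deux premiers distincts au-dessus de $5$. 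Vous invoquez ces faits via la construction fondamentale, c'est-à-dire au même niveau de rigueur que les appels du texte à \cite{Z4} et \cite{Z5} dans les démonstrations des théorèmes 2 et 6: les deux démonstrations sont donc conditionnelles aux mêmes résultats de structure, la vôtre ayant l'avantage d'éliminer l'étape de non-trivialité de $I$ qui reste sans justification dans le texte.
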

\begin{proof}
Nous allons travailler avec la forme $G_{1}$ de $G$.\\
1) Nous savons que $N(G)$ est stable par multiplication par $\gamma$ et par $\theta=4-\alpha-\beta-\gamma-\alpha l=-i$, donc $N(G)$ est un $\mathbb{Z}[\tau,i]$-module car $\gamma=\tau_{\epsilon}+1$. Comme toutes les réflexions de $G$ sont conjuguées, on obtient $I_{1}=I_{2}=I_{3}(:=I)$ et $I$ est un idéal principal car $\mathbb{Z}[\tau,i]$ est un anneau principal. On a $(s_{1}(s_{2}s_{3})^{5})^{2}=\frac{-2}{4-\gamma}c_{1}\in N(G)$ donc d'après ce qui précède $2\in I$. Il en résulte que comme idéal principal non trivial, $I$ est engendré par $2$ ou par $1+i$. $N(G)$ est un $\mathbb{Z}[\tau,i]$-module de rang $4$ donc un $\mathbb{Z}[G]$-module libre de rang $8$.\\
2) Une condition nécéssaire pour que la suite $(\star\star)$ soit scindée est que la condition $(\mathcal{E})$ soit satisfaite:
\[
(\mathcal{E})\quad -1=(4-\gamma)\lambda_{1}+(l+2)\lambda_{2}+(m+2)\lambda_{3}
\]
où chaque $\lambda_{j}$ est dans $I_{j}$, $1\leqslant i \leqslant 3$. Mais chaque $\lambda_{j}$ est un multiple de $1+i$ qui n'est pas inversible. On a donc une impossibilité et la suite $(\star\star)$ est non scindée.
\end{proof}
\section{Remarques finales et questions}
Soit $W$ un groupe de Coxeter de rang fini, $2$-sphérique et irréductible et soit $R:W\to GL(M)$ une représentation de réflexion obtenue par la construction fondamentale. \\
D'après \cite{Z2} proposition 20 (qui est valable avec les hypothèses précédentes) $\ker R$ est sans torsion.\\
- Question 1: $\ker R$ est-il un groupe libre? Si oui, comment en obtenir une base?

Si $R$ est réductible, on a défini le groupe $N(G)$ dans \cite{Z4} théorème 1.\\
- Question 2: le groupe $N(G)$ est-il toujours non trivial? la condition donnée dans la proposition 3 de \cite{Z4} est suffisante mais pas nécéssaire d'après les résultats de \cite{Z3}.\\
- Question 3: Si $N(G)$ est non trivial, est-il toujours muni d'une structure de $\mathcal{O}K$-module?

Peut-on étendre les résultats de ces travaux aux pseudo-groupes de réflexion?


\begin{thebibliography}{99}
\bibitem{B}
N. Bourbaki
\emph{Groupes et algèbres de Lie. Chapitres 4, 5, 6}
Hermann (1968)
\bibitem{BMR}
M. Broué, G. Malle, R. Rouquier
\emph{Complex reflection groups, braids groups, Hecke algebra}
J. Reine Angew. Math. 500 (1988) 127--190
\bibitem{BR}
M. Broué
\emph{Introduction to Complex Reflection Groups and their Braids Groups}
Lecture notes in Mathematics 1988 Springer Verlag (2010)
\bibitem{BS1}
Bernstein et Shwartsam
\emph{Complex crystallographic Coxeter groups and affine root systems.}
J. Nonlinear Math. Phys. 13 (2006) 163-182
\bibitem{BS2}
Bernstein et Shwartsam
\emph{Chevalley's theorem for the complex crystallographic groups.}
J. Nonlinear Math. Phys. 13 (2006) 323-351
\bibitem{M}
G. Malle
\emph{Presentation for crystallographic complex reflection groups.}
Transform. Groups 1 (1996) n°3 259-277
\bibitem{P}
V. Popov
\emph{Discrete complex reflection groups.}
Communications of the Mathematical Institute Rijks universiteit Utrecht (1982)
\bibitem{PS}
P. Puente et A. Shepler
Steinberg theorem for crystallographic complex reflection groups.
Journal of Algebra (2018) 332-350
\bibitem{S}
Jian-yi Shi 
\emph{Simple root systems and presentation for certain complex reflection groups.}
Communocations in Algebra, 33 (2005) 1765-1783
 \bibitem{Z}
  F. Zara
  \emph{Generalized reflection groups}
  Journal of Algebra 255 (2002) 221--246
  \bibitem{Z1}
  F. Zara
  \emph{Représentations de réflexion de groupes de Coxeter.
Première partie: le cas irréductible}
arXiv:2001.09278 (2020)
\bibitem{Z2}
F. Zara
\emph{Représentations de réflexion de groupes de Coxeter. Deuxième partie: outils pour des exemples}
arXiv:2002.00923 (2020)
\bibitem{Z3}
F. zara
\emph{Représentations de réflexion de groupes de Coxeter. Troisième  partie: les groupes diédraux affines}
arXiv:2002.03608 (2020)
\bibitem{Z4}
F. Zara
\emph{Représentations de réflexion de groupes de Coxeter. Quatrième partie: la représentation $R$ est réductible. Généralités}
arXiv:2002.09883 (2020)
\bibitem{Z5}
F. Zara
\emph{Représentations de réflexion de groupes de Coxeter. Cinquième partie: la représentation $R$ est réductible. Cas particulier du rang $3$}
arXiv: 2003.00940 (2020)
\end{thebibliography}
\end{document}